\newcommand{\nicecolor}{Navy}
\setlist[1]{wide}
\setlist[2]{leftmargin=15mm}
\setlist[enumerate]{label=\rm{(\arabic*)}}
\setlist[enumerate,2]{label=\rm({\it\roman*}), }
\setlist[itemize]{label=\raisebox{0.25ex}{\tiny$\bullet$}}
\definecolor{grisclair}{rgb}{0.9,0.9,0.9}
\tikzset{>=stealth}
\tikzset{link/.style={column sep=1.8cm,row sep=0.16cm}}
\tikzset{map/.style={row sep=0em, column sep=0em}}
\DeclareFontFamily{U}{mathb}{\hyphenchar\font45}
\DeclareFontShape{U}{mathb}{m}{n}{
	<5> <6> <7> <8> <9> <10> gen * mathb
	<10.95> mathb10 <12> <14.4> <17.28> <20.74> <24.88> mathb12
}{}
\DeclareSymbolFont{mathb}{U}{mathb}{m}{n}
\DeclareMathSymbol{\bigast}{1}{mathb}{"06}
\DeclareFontFamily{U}{mathx}{\hyphenchar\font45}
\DeclareFontShape{U}{mathx}{m}{n}{<-> mathx10}{}
\DeclareSymbolFont{mathx}{U}{mathx}{m}{n}
\DeclareMathAccent{\widebar}{0}{mathx}{"73}
\DeclareFontFamily{U}{mathx}{\hyphenchar\font45}
\DeclareFontShape{U}{mathx}{m}{n}{<-> mathx10}{}
\DeclareSymbolFont{mathx}{U}{mathx}{m}{n}
\DeclareMathAccent{\widebar}{0}{mathx}{"73}
\renewcommand{\to}{ \, \tikz[baseline=-.6ex] \draw[->,line width=.5] (0,0) -- +(.5,0); \, }
\newcommand{\longto}{ \, \tikz[baseline=-.6ex] \draw[->,line width=.5] (0,0) -- +(.9,0); \, }
\renewcommand{\longrightarrow}{\longto}
\newcommand{\rat}{ \, \tikz[baseline=-.6ex] \draw[->,densely dashed,line width=.5] (0,0) -- +(.5,0); \, }
\renewcommand{\dasharrow}{\rat}
\renewcommand{\dashrightarrow}{\rat}
\newcommand{\ps}{ \, \tikz[baseline=-.6ex] \draw[->,dotted,line width=.6] (0,0) -- +(.5,0); \,}
\renewcommand{\mapsto}{ \, \tikz[baseline=-.6ex] \draw[|->,line width=.5] (0,0) -- +(.5,0); \, }
\newcommand\iso{\stackrel{\sim}{\to}}
\newcommand{\tto}{ \, \tikz[baseline=-.6ex] \draw[->>,line width=.5] (0,0) -- +(.5,0); \, }
\renewcommand{\twoheadrightarrow}{\tto}
\DeclareMathOperator{\Bir}{Bir}
\DeclareMathOperator{\Aut}{Aut}
\DeclareMathOperator{\Ker}{Ker}
\DeclareMathOperator{\PGL}{PGL}
\DeclareMathOperator{\PSL}{PSL}
\DeclareMathOperator{\inv}{\mathbf g}
\DeclareMathOperator{\dP}{dP}
\DeclareMathOperator{\Exc}{Exc}
\DeclareMathOperator{\BirMori}{BirMori}
\theoremstyle{plain}
\newtheorem{thm}{Theorem}[section]
\newtheorem{lem}[thm]{Lemma} 
\newtheorem{cor}[thm]{Corollary} 
\newtheorem{prop}[thm]{Proposition}
\newtheorem{maintheorem}{Theorem}
\theoremstyle{definition}
\newtheorem{mydef}[thm]{Definition} 
\newtheorem{rem}[thm]{Remark}
\newtheorem*{question}{Question}
\newtheorem{ex}[thm]{Example}
\g@addto@macro{\endabstract}{\@setabstract}
\newcommand{\authorfootnotes}{\renewcommand\thefootnote{\@fnsymbol\c@footnote}}%
\title[cubic del Pezzo  fibrations]{Quotients of groups of birational transformations of cubic del Pezzo  fibrations}
\author{J\'er\'emy Blanc}
\address{Universit{\"a}t Basel,
Departement Mathematik und Informatik,
Spiegelgasse 1,
CH-4051, Basel,
Switzerland}
\email{Jeremy.Blanc@unibas.ch}
\author{Egor Yasinsky}
\address{Universit{\"a}t Basel,
Departement Mathematik und Informatik,
Spiegelgasse 1,
CH-4051, Basel,
Switzerland}
\email{yasinskyegor@gmail.com}
\thanks{Both authors acknowledge support by the Swiss National Science
Foundation Grant ``Birational transformations of threefolds'' 200020\_178807.}
\subjclass[2010]{14E07, 14E05, 14E30, 14J45, 14M22}
\newcommand{\I}{\ensuremath{\mathrm{I}}}
\newcommand{\II}{\ensuremath{\mathrm{II}}}
\newcommand{\III}{\ensuremath{\mathrm{III}}}
\newcommand{\IV}{\ensuremath{\mathrm{IV}}}
\newcommand{\CC}{\mathbb C} 
\newcommand{\QQ}{\mathbb Q} 
\newcommand{\id}{\mathrm{id}}
\newcommand{\kk}{\Bbbk}
\newcommand{\RR}{\mathbb R}
\newcommand{\PP}{\mathbb P}
\newcommand{\ZZ}{\mathbb Z}
\newcommand{\pt}{\mathrm{pt}}
\newcommand{\Bertini}{\mathscr B}  
\begin{document}
	
	\maketitle

\begin{abstract}
We prove that the group of birational transformations of a del Pezzo fibration of degree 3 over a curve is not simple, by giving a surjective group homomorphism to a free product of infinitely many groups of order 2. As a consequence we also obtain that the Cremona group of rank 3 is not generated by birational maps preserving a rational fibration. Besides, the subgroup of $\Bir(\PP^3)$ generated by all connected algebraic subgroups is a proper normal subgroup.
\end{abstract}

\section{Introduction}\label{sec:intro}

\subsection{Non-simplicity of the Cremona groups} Let $X$ be an algebraic variety over a field $\kk$. We denote by $\Bir_\kk(X)$ its group of birational automorphisms. During the last few decades, there have been numerous papers concerning various properties of these groups. A case of particular interest is $X=\PP^n$, when the group $\Bir_\kk(\PP^n)$ is called the {\it Cremona group} of rank $n$. Rank one Cremona group over $\kk$ is just $\PGL_2(\kk)$. However, already for $n=2$ we get a much more complicated object, which is very far from being a (finite dimensional) linear algebraic group. These days, we know quite a lot about the group $\Bir_\kk(\PP^2)$, especially when the base field $\kk$ is algebraically closed of characteristic zero.  In particular, we know the presentation of this group in terms of generators and relations \cite{Gizatullin,IKT,UZ}, description of finite subgroups \cite{DI} or more generally finitely generated subgroups and other different group-theoretic properties: see \cite{Cantat13,Deserti19} for some overviews and references. Here we only want to emphasize that most questions about Cremona groups remain widely open in dimension greater than 2. 

The aim of this paper is to get some insights about the structure of $\Bir(X)$ for $X$ a three-dimensional complex algebraic variety, and in particular to address a question about the simplicity of such a group. For Cremona groups, this question goes back to F. Enriques, and it remained unsolved for more than 100 years. The non-simplicity of $\Bir_\CC(\PP^2)$ as an abstract group was first proven by S.~Cantat and S.~Lamy in \cite{CantatLamy} using an action of $\Bir_\CC(\PP^2)$ on an infinite-dimensional hyperbolic space. Later this result was generalized by A.~ Lonjou to an arbitrary base field \cite{Lonjou}. Another approach to non-simplicity of $\Bir_\RR(\PP^2)$, based on explicit presentation of $\Bir_\RR(\PP^2)$ by generators and relations, was discovered by S.~Zimmermann in \cite{Zimmermann}. The question of the non-simplicity of higher rank Cremona groups remained open until the recent preprint \cite{BLZ}, where the so-called {\it Sarkisov program} and recent achievements in the Borisov-Alexeev-Borisov conjecture \cite{BirkarA} allowed the authors to prove the following result: 

\begin{thm}\label{thm: BLZ 1}
	Let $B\subseteq \PP^m$ be a smooth projective complex variety, $P\to \PP^m$ a decomposable $\PP^2$-bundle $($projectivisation of a decomposable rank $3$ vector bundle$)$ and $X\subset P$ a smooth closed subvariety such that the projection to $\PP^m$ gives a conic bundle $\eta\colon X\to B$.
	Then there exists a group homomorphism
	\[\Bir(X)\twoheadrightarrow \bigoplus_\ZZ \ZZ/2\]
	which is surjective in restriction to $\Bir(X/B)=\{\phi\in \Bir(X)\mid \eta\circ\phi=\eta\}$. 
	
	Moreover, if there exists a subfield $\kk\subseteq \CC$ over which $X,B$ and $\eta$ are defined, the image of elements of $\Bir(X/B)$ defined over $\kk$ is also infinite.
\end{thm}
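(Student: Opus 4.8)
\emph{Proof strategy.} The plan is to build the homomorphism through the Sarkisov program, following the framework of \cite{BLZ} (itself modelled on Zimmermann's treatment of $\Bir_\RR(\PP^2)$ in \cite{Zimmermann}). Since $\eta\colon X\to B$ is a Mori fibre space, the Sarkisov program and its relative refinements express every $\varphi\in\Bir(X)$ as a composition of Sarkisov links and isomorphisms of Mori fibre spaces; equivalently, $\Bir(X)$ is the vertex group at $X$ of the Sarkisov groupoid $\BirMori$, whose objects are Mori fibre spaces and whose arrows are generated by Sarkisov links and isomorphisms. The essential extra input is the description of the relations in $\BirMori$: they are generated by the \emph{trivial} relations (a link composed with its inverse; conjugation of a link by isomorphisms) together with the \emph{elementary} relations, one attached to each rank-$3$ fibration $T\to Z$, asserting that a certain cyclic composition of finitely many Sarkisov links — read off from the relative $2$-ray games of $T/Z$ — equals the identity. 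Boundedness of Fano-type fibrations \cite{BirkarA} enters here, to keep the families of rank-$2$ and rank-$3$ fibrations that occur bounded, so that these relations are under control. The upshot is that, to construct a homomorphism $\Bir(X)\to G$ into an abelian group $G$, it suffices to attach to every Sarkisov link $\chi$ an element $\mu(\chi)\in G$ with $\mu(\chi^{-1})=-\mu(\chi)$, invariant under conjugation by isomorphisms, and such that $\sum_i\mu(\chi_i)=0$ whenever $\chi_1,\dots,\chi_n$ are the links of an elementary relation.

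Next I would single out the links that detect the conic bundle. Over the generic point of $B$, $X\to B$ is a conic over $K=\kk(B)$, and $\Bir(X/B)$ is the group of $K$-birational selfmaps of that conic, i.e.\ $\PGL_2(K)$ or an inner form of it. Its involutions are classified up to conjugacy by the square class of their fixed locus, a class naturally recorded by a prime divisor $\Gamma\subset B$ together with the double cover $D\to\Gamma$ it induces. Correspondingly there is a distinguished class of Sarkisov links between conic bundles — call them \emph{reflection links} — namely those that twist the conic bundle over a prime divisor of the base by a nontrivial involution of the generic fibre; to a reflection link one attaches the equivalence class of the associated data $\Gamma\subset B$ with its double cover $D\to\Gamma$, under birational maps of conic bundles. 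Let $I$ be the set of such classes that actually occur; since $I$ is infinite, $G:=\bigoplus_{i\in I}\ZZ/2$ surjects onto $\bigoplus_\ZZ\ZZ/2$, so it suffices to produce the homomorphism into $G$. Define $\mu(\chi)=e_{[\chi]}$ if $\chi$ is a reflection link of class $[\chi]\in I$, and $\mu(\chi)=0$ for every other Sarkisov link; the required symmetry and conjugation-invariance are then immediate.

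The heart of the matter is to verify $\mu$ against the elementary relations. Here I would run through the rank-$3$ fibrations $T\to Z$ arising over bases of the type in the statement and check that, in each of the resulting cycles of Sarkisov links, the reflection links occur in pairs carrying the same class in $I$, so that their contributions cancel in $\ZZ/2$. The easy cases are those in which no reflection link appears, or in which the whole cycle stays within conic bundles over a fixed base; the delicate ones are the rank-$3$ fibrations whose $2$-ray games mix a conic-bundle contraction with a divisorial contraction or a flip over the base, and one must check that a fibrewise twist over $\Gamma$ introduced on one side of the cycle is undone — up to the chosen equivalence — by exactly one reflection link on the other side. I expect this finite but intricate classification of rank-$3$ fibrations, made finite precisely by the boundedness input, to be the main obstacle.

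Granting this, $\mu$ descends to a homomorphism $\Psi\colon\Bir(X)\to G$. For surjectivity even in restriction to $\Bir(X/B)$, it suffices to exhibit, for infinitely many pairwise inequivalent prime divisors $\Gamma\subset B$, a fibrewise reflection involution of $X$ over $B$ whose Sarkisov factorisation is a single reflection link over $\Gamma$: such an element lies in $\Bir(X/B)$, maps to the generator of the $[\Gamma]$-summand, and maps to $0$ in every other summand, so these images generate all of $G$ (hence surject onto $\bigoplus_\ZZ\ZZ/2$). On $B\subseteq\PP^m$ there are plainly infinitely many inequivalent such $\Gamma$. Finally, when $X$, $B$ and $\eta$ are defined over a subfield $\kk\subseteq\CC$, the whole construction goes through over $\kk$, and one may take infinitely many of the divisors $\Gamma$, together with the associated reflections, to be defined over $\kk$ — they correspond to independent square classes of $\kk(B)$ — so the image of $\Bir_\kk(X/B)$, and a fortiori of $\Bir_\kk(X)$, is already infinite.
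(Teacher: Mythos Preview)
The statement you were given (Theorem~\ref{thm: BLZ 1}) is not proved in this paper: it is quoted from \cite{BLZ} as background, and the paper's own contributions are the analogous results for cubic del Pezzo fibrations (Theorems~A--D), proved by the same groupoid strategy with \emph{Bertini links} (Definition~\ref{def: bertini link}) in place of your ``reflection links''. So there is no proof here to compare against directly; the closest analogue is the paper's treatment of Theorem~\ref{thm:BirMori}.

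Your sketch is a fair outline of the \cite{BLZ} architecture and matches in spirit what this paper does in the cubic case, but one structural point needs sharpening. You describe the verification of elementary relations as checking that reflection links ``occur in pairs carrying the same class'', with boundedness making the rank-$3$ classification finite. In both \cite{BLZ} and the present paper the mechanism is really a two-step argument with a \emph{complexity cutoff}: one attaches to each distinguished link a numerical invariant (here the genus $\inv(\chi)$ of the blown-up curve, Definition~\ref{def: genus}) and uses BAB to prove that links whose invariant exceeds a fixed threshold $g$ simply cannot appear in any elementary relation over a \emph{point} (Corollary~\ref{cor: bounded genus}, Proposition~\ref{prop: no relations for links of type 1}); only then, for relations over a \emph{curve}, does one establish the pairing-in-equivalent-classes statement (Proposition~\ref{prop: relation form}). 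The homomorphism is defined only on links above the threshold and sends everything else to zero. Your sketch collapses these two steps; without the cutoff, low-complexity links do appear in elementary relations over a point that one cannot hope to classify, and the ``pairs'' claim fails there. This threshold device, not a finite enumeration of rank-$3$ fibrations, is where boundedness actually enters.
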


Theorem~\ref{thm: BLZ 1} applies to any product $X = \PP^1 \times B$, to smooth cubic hypersurfaces $X  \subseteq \PP^{n}$ and to many other non-rational varieties of dimension $n \ge 3$. Clearly, it also includes the case of $X = \PP^1 \times \PP^{n-1}$ which is birational to $\PP^n$, hence proving that Cremona groups $\Bir(\PP^n)$ are not simple. For a version of this result in dimension $2$ over a non-closed field, see \cite{Schneider}.

\subsection{Are there simple ``large'' groups of birational automorphisms?} In view of the previous discussion, it seems very natural to ask the following

\begin{question}[\bf S. Cantat]
	Is there a complex algebraic variety $X$ such that the group $\Bir(X)$ is ``infinite-dimensional'' and is a simple $($abstract$)$ group?
\end{question}

Let us now prove that a potential counterexample to this question must have dimension at least $3$. Firstly, the curve case is trivial. Let $X$ be an algebraic surface. We may assume that $X$ is smooth projective and minimal. If $X$ is of general type, then Matsumura's theorem \cite{Matsumura} says that $\Bir(X)$ is finite. If $X$ is birational to $\PP^2$, then $\Bir(X)$ is isomorphic to the Cremona group of rank 2, which is not simple. According to Enriques-Kodaira classification, in the remaining cases either $X$ is a non-rational ruled surface, or $K_X$ is nef. In the latter case $\Bir(X)=\Aut(X)$ and by Matsusaka's theorem \cite{Matsusaka} the group $\Aut(X)$ has a structure of a locally algebraic group with finite or countably many components. Finally, if $X$ is ruled over a non-rational curve $B$, then 
\[
\Bir(X)\simeq\PGL_2(\CC(B))\rtimes\Aut(B)
\]
which is of infinite dimension, but is not simple. Indeed, the subgroup $\PSL_2(\CC(B))\subsetneq \PGL_2(\CC(B))$ is a proper normal subgroup of $\Bir(X)$ (if $\Aut(B)$ is not trivial, one can of course also consider $\PGL_2(\CC(B))$). 

\subsection{Threefolds and Mori fibrations} Let $X$ be a rationally connected three-dimensional algebraic variety over an algebraically closed field of characteristic zero. Run the Minimal Model Program on $X$. Recall that during this program we stay in the category of projective normal varieties with at worst terminal $\QQ$-factorial singularities. Since $X$ is rationally connected, on the final step we get a {\it Fano-Mori fibration} $f: \widetilde{X}\to Z$, which means that $\dim Z<\dim X$, $Z$ is normal, $f$ has connected fibers, the anticanonical Weil divisor $-K_{\widetilde{X}}$ is ample over $Z$, and $\rho(\widetilde{X}/Z)=1$. Then we have one of the following three possibilities:

\begin{enumerate}
	\item $Z$ is a rational surface and a general fiber of $f$ is a conic;
	\item $Z\simeq\PP^1$ and a general fiber of $f$ is a smooth del Pezzo surface;
	\item $Z$ is a point and $\widetilde{X}$ is a $\QQ$-Fano threefold of Picard rank $1$.
\end{enumerate}

We see that Theorem \ref{thm: BLZ 1} covers the first case of this trichotomy (or at least most of them), so it is natural to proceed with the cases of del Pezzo fibrations and Fano threefolds. Moreover, every del Pezzo fibration of degree $\geqslant 4$ is birational to a conic bundle (see e.g. \cite[Lemma 2.5]{BCDP}), so it is natural to study the next cases, which correspond to del Pezzo fibrations of degrees $3$, $2$ or $1$. 

Our goal is to study the first case which is not covered by Theorem \ref{thm: BLZ 1} --- the case of del Pezzo fibrations of degree $3$. Our main result is the following statement:

\begin{maintheorem}\label{thm:grouphom}For each complex del Pezzo fibration $\pi\colon X\to B$ of degree~$3$ over a curve $B$, there exists a  group homomorphism 
	\[
	\Bir(X)\twoheadrightarrow\bigast\limits_{\mathbb{N}}\ZZ/2
	\]
	whose restriction to $\Bir(X/B)=\{\varphi\in \Bir(X)\mid \pi\varphi=\pi\}$ is surjective.
\end{maintheorem}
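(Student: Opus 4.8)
The plan is to adapt the method used by Blanc--Lamy--Zimmermann \cite{BLZ} to prove Theorem~\ref{thm: BLZ 1}, which rests on the relative Sarkisov program together with the presentation of the groupoid of birational maps between Mori fibre spaces by generators --- the Sarkisov links --- and relations --- the \emph{elementary relations}, coming from rank $2$ and rank $3$ fibrations. In this framework, a homomorphism $\varphi$ from $\Bir(X)$ to a group $G$ is produced by choosing, for every Sarkisov link $\chi$ between Mori fibre spaces birational to $X$, an element $g_\chi\in G$ with $g_{\chi^{-1}}=g_\chi^{-1}$, by sending isomorphisms to the identity, and by checking that $g_{\chi_1}\cdots g_{\chi_n}=e$ holds for every elementary relation $\chi_1\circ\dots\circ\chi_n=\id$; one then restricts the resulting groupoid homomorphism to the loops based at $X$. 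Here I would take $G=\bigast_{\NN}\ZZ/2$.

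The links that should be sent to free generators are fibrewise ``cubic-surface'' involutions. Let $\pi'\colon X'\to B'$ be any del Pezzo fibration of degree $3$ over a curve in the birational class of $X$; since $\rho(X'/B')=1$, its generic fibre $X'_\eta$ is a $\CC(B')$-minimal cubic surface, and by Tsen's theorem it has a $\CC(B')$-point. If $\Gamma\subset X'$ is a multisection, general of degree $d\in\{1,2\}$ over $B'$, then blowing up $\Gamma$ gives a rank $2$ fibration over $B'$ whose generic fibre $\Bl_\Gamma X'_\eta$ is a del Pezzo surface of degree $3-d$, carrying its intrinsic Geiser (for $d=1$), resp.\ Bertini (for $d=2$), involution $\gamma$. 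As $\gamma$ does not fix the exceptional divisor over $\Gamma$, conjugating $\gamma$ by the contraction $\Bl_\Gamma X'\to X'$ yields a \emph{birational} (not regular) involution $\iota_\Gamma\in\Bir(X'/B')$; since $\Bl_\Gamma X'$ has two extremal contractions to models of $X'/B'$, this $\iota_\Gamma$ is realised by a single Sarkisov self-link of type II. I would attach to each such link a free generator indexed by the class of $\iota_\Gamma$ under a suitable equivalence relation (at least: $\Gamma$ up to automorphisms, and up to the identifications forced by elementary relations), composing with a projection onto $\bigast_\NN\ZZ/2$ if the set of classes is too large, and set $g_\chi=e$ for all other Sarkisov links --- in particular for links involving conic bundles, Fano threefolds, or del Pezzo fibrations of degrees $\ne 3$.

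The heart of the argument is to verify compatibility with the elementary relations, i.e.\ a rigidity statement: in any elementary relation a ``marked'' involution $\iota_\Gamma$ can occur only against another involution in the same class, and so that the induced word in $\bigast_\NN\ZZ/2$ is trivial (in practice the relation reads $\iota_\Gamma\circ\iota_\Gamma=\id$ up to isomorphisms and trivial links). To prove this I would pass to the generic fibre: a rank $3$ fibration over a curve $B'$ producing an elementary relation that involves some $\iota_\Gamma$ restricts, over the generic point of $B'$, to a rank $3$ fibration over $\CC(B')$ dominating the $\CC(B')$-minimal cubic surface $X'_\eta$. Since a minimal cubic surface over a field is birationally rigid and its group of birational self-transformations is understood --- generated, modulo automorphisms, by the involutions $\iota_p$ centred at closed points of degree $\le 2$, with essentially the only relations being $\iota_p^2=\id$ and the conjugation relations with automorphisms (Segre, Manin, Iskovskikh) --- the surface-level elementary relations have exactly the required shape, and one transfers this back to the threefold. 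Carrying this out carefully --- tracking the finitely many non-generic fibres, the terminal $\QQ$-factorial singularities of the threefold models, and multisections in special position --- is, I expect, the main technical obstacle; it is also where the hypothesis ``degree $3$'' is essential, degrees $1$ and $2$ behaving differently because the relevant fibrewise involution is then intrinsic rather than point-dependent.

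Finally, for surjectivity of $\varphi$ in restriction to $\Bir(X/B)=\Bir_{\CC(B)}(X_\eta)$: since $X_\eta$ has a $\CC(B)$-point, $X$ admits general rational (and degree $2$) multisections $\Gamma$ over $B$, and a moduli count produces infinitely many of them lying in pairwise distinct classes; each gives an involution $\iota_\Gamma\in\Bir(X/B)$ of order $2$ whose image under $\varphi$ is a distinct free generator. Hence the subgroup of $\Bir(X/B)$ generated by these involutions already surjects onto $\bigast_\NN\ZZ/2$, which, together with the fact that $\varphi$ is a well-defined homomorphism on all of $\Bir(X)$, yields the theorem.
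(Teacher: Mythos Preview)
Your overall framework is the right one and matches the paper's, but there is a genuine gap in the step where you verify compatibility with elementary relations. You propose to handle every elementary relation involving one of your marked involutions by passing to the generic fibre of a rank~$3$ fibration over a curve and invoking the Segre--Manin--Iskovskikh description of $\Bir$ of a minimal cubic surface over $\CC(B')$. But an elementary relation comes from a rank~$3$ fibration $T/B$, and when your Bertini link $\chi$ is a type~\II{} link over a curve $B'$, the base $B$ need not be $B'$: it can also be a \emph{point} (so $B'\to B=\pt$). In that case $T$ is a threefold pseudo-isomorphic to a weak Fano, there is no generic fibre over a curve to restrict to, and your argument simply does not apply. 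This ``relation over a point'' case is precisely the hard one.

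The paper deals with it by introducing the \emph{genus} $\inv(\chi)$ of a Bertini link (the geometric genus of the blown-up multisection) and proving, as a consequence of BAB-type boundedness for weak Fano threefolds, that there is an absolute bound $N$ such that no Bertini link of genus $>N$ can appear in an elementary relation over a point (Corollary~\ref{cor: bounded genus}, Proposition~\ref{prop: no relations for links of type 1}). Only Bertini links of genus $\ge g$ are then sent to nontrivial generators; this threshold is invisible in your sketch and is exactly what is missing. For relations over a curve the paper gives a direct combinatorial analysis (Proposition~\ref{prop: relation form}) showing the relation has length $4$ with two equivalent Bertini links and two links that are isomorphisms on the generic fibre --- morally close to your generic-fibre idea, though carried out on the threefold rather than by quoting the surface classification. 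For surjectivity the paper again uses the genus: $2$-sections of arbitrarily large genus exist (Proposition~\ref{prop: genus is unbounded}), and links of different genus lie in different equivalence classes, which replaces your unspecified ``moduli count''.
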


As the following example shows, there are cubic del Pezzo fibrations which are not birational to conic bundles and such that not all birational transformations preserve the cubic del Pezzo fibration. Theorem~\ref{thm:grouphom} shows then that $\Bir(V_2)$ is not simple, a fact that does not follow from \cite{BLZ}.

\begin{ex}[{\cite{Sobolev}}]\label{ex: Sobolev}
	Let $V_m\subset\PP^1\times\PP^3$ be a general divisor of bidegree $(m,3)$. Note that the variety $V_1$ is rational: the restriction of the projection $\PP^1\times\PP^3\to\PP^3$ on $V_1$ gives a birational morphism $V_1\to\PP^3$. 
	
	Consider the variety $V_2$. Let $[u_0:u_1]$ and $[x_0:x_1:x_2:x_3]$ be the homogeneous coordinates on the factors. Then $V$ is given by the equation
	\[
	A(x_0,x_1,x_2,x_3)u_0^2+B(x_0,x_1,x_2,x_3)u_0u_1+C(x_0,x_1,x_2,x_3)u_1^2=0,
	\]
	where $A,\ B$ and $C$ are polynomials of degree $3$. The system $A=B=C=0$ has 27 distinct solutions $q_i\in\PP^3$, $i=1,\ldots, 27$, which give $27$ lines $L_i=\PP^1\times q_i\subset V_2$. The Galois involution $\tau$ of the double cover $V_2\to\PP^3$ is well defined outside $L_i$, so $\tau\in\Bir(V_2)$. One can show that $V_2$ is not birationally rigid. The involution $\tau$ corresponds to the flop of the $27$ lines mentioned above, giving another structure of Mori fiber space on $V_2$ and $\tau$ is a Sarkisov link of type \IV (see \S\ref{Sec:SarkisovLinks}). In \cite{Sobolev} it is shown that $V_2$ is not birational to a conic bundle and that
	\[
	\Bir(V_2)=\langle\tau\rangle\ast\Bir(V_2/\PP^1).
	\]
\end{ex}

Note that Theorem~\ref{thm:grouphom} also applies to the group $\Bir_{\CC}(\PP^3)$, and gives a proof of the non-simplicity of $\Bir_{\CC}(\PP^3)$, alternative to the one of \cite{BLZ} (using in fact the four first sections of \cite{BLZ}, so essentially the half of the paper). Even if the quotients that we obtain are isomorphic to the ones of \cite{BLZ}, the normal subgroups of $\Bir_{\CC}(\PP^3)$ that we construct are quite different. For instance, every birational map of $\PP^3$ that preserves a conic fibration is in the kernel of our group homomorphisms. We then obtain the following result on generators of $\Bir_\CC(\PP^3)$:
\begin{maintheorem}\label{thm:TameproblemBirP3}There is a surjective group homomorphism 
	\[
	\rho\colon\Bir_\CC(\PP^3)\twoheadrightarrow\bigast\limits_{\mathbb{N}}\ZZ/2
	\]
	having the following property:
	
	Denoting by $G\subseteq \Bir_\CC(\PP^3)$ the group generated by all birational maps $\varphi\in \Bir_\CC(\PP^3)$ such that $\pi\varphi=\pi$ for some rational fibration $\pi\colon \PP^3\dasharrow \PP^2$ $($a rational map with general fibres rational$)$, we have $\Aut_\CC(\PP^3)\subsetneq G\subsetneq \Ker\rho$. In particular, we have a strict inclusion $G\subsetneq \Bir_\CC(\PP^3)$.
\end{maintheorem}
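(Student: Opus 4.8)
The plan is to deduce Theorem~\ref{thm:TameproblemBirP3} from Theorem~\ref{thm:grouphom}, applied to a cubic del Pezzo fibration birational to $\PP^3$, together with an inspection of how the homomorphism in Theorem~\ref{thm:grouphom} is constructed. For the source of $\rho$ I would take $V_1\subset\PP^1\times\PP^3$, a general divisor of bidegree $(1,3)$ as in Example~\ref{ex: Sobolev}: the second projection restricts to a birational morphism $V_1\to\PP^3$, so that $\Bir_\CC(\PP^3)\cong\Bir(V_1)$, while the first projection $\pi_0\colon V_1\to\PP^1$ is a del Pezzo fibration of degree $3$ over a curve. Indeed, adjunction gives $-K_{V_1}=\mathcal O(1,1)|_{V_1}$, which is $\pi_0$-ample and restricts to the anticanonical class on each (cubic surface) fibre; the generic fibre is a smooth cubic surface over $\CC(\PP^1)$; and $\Pic(V_1)\cong\ZZ^2$ by Grothendieck--Lefschetz, the divisor $V_1$ being smooth and ample in the fourfold $\PP^1\times\PP^3$, so that $V_1/\PP^1$ has relative Picard rank $1$. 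Theorem~\ref{thm:grouphom} applied to $\pi_0$ then yields a surjection $\rho\colon\Bir_\CC(\PP^3)\cong\Bir(V_1)\twoheadrightarrow\bigast_\NN\ZZ/2$ whose restriction to $\Bir(V_1/\PP^1)$ is already surjective; this is the $\rho$ of the statement.

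The inclusion $\Aut_\CC(\PP^3)\subseteq G$ is elementary. The transvections $E_{ij}(t)\colon x_i\mapsto x_i+t x_j$, with $i\neq j$ and $t\in\CC$, generate $\PGL_4(\CC)=\Aut_\CC(\PP^3)$, and $E_{ij}(t)$ fixes every coordinate $x_k$ with $k\neq i$, hence satisfies $\pi_i\circ E_{ij}(t)=\pi_i$ where $\pi_i\colon\PP^3\dasharrow\PP^2$ is the linear projection away from $[e_i]$, a rational fibration whose general fibres are lines. Thus every $E_{ij}(t)$ lies in $G$, and so $\Aut_\CC(\PP^3)\subseteq G$. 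The inclusion is strict: fixing a linear projection $p\colon\PP^3\dasharrow\PP^2$, the group $\{\varphi\in\Bir_\CC(\PP^3)\mid p\varphi=p\}$ is $\PGL_2(\CC(\PP^2))$, which contains non-linear birational transformations of $\PP^3$ — for instance a fibrewise translation by a non-constant element of $\CC(\PP^2)$ — and these lie in $G\setminus\Aut_\CC(\PP^3)$.

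The core of the argument is the inclusion $G\subseteq\Ker\rho$, for which I would use the construction of $\rho$ carried out in the proof of Theorem~\ref{thm:grouphom}, not merely its statement. That homomorphism is built on the three-dimensional Sarkisov groupoid: one attaches to each Sarkisov link an element of $\bigast_\NN\ZZ/2$, compatibly with the elementary relations among links, so that the assignment is a functor on the groupoid — hence restricts to a homomorphism on the loop group at any object, in particular at $V_1$ — and so that the only links with non-trivial value are certain involutive ``Bertini-type'' self-links of cubic del Pezzo fibrations of degree $3$ over curves. Now let $\varphi\in\Bir_\CC(\PP^3)$ with $\pi'\varphi=\pi'$ for a rational fibration $\pi'\colon\PP^3\dasharrow\PP^2$ with rational general fibre. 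Passing to a relatively minimal model over a smooth projective surface realises $(\PP^3,\pi')$ as a conic bundle $Y\to S$ with $Y$ birational to $\PP^3$, of relative Picard rank $1$, and with $\varphi$ corresponding to an element of $\Bir(Y/S)$. By the relative Sarkisov program over $S$ for conic bundles — developed in the first sections of \cite{BLZ} — this element factors as a composition of Sarkisov links between conic bundles over surfaces; none of these is a Bertini-type self-link of a cubic del Pezzo fibration over a curve, so each carries trivial value. Writing $\varphi$ as a loop at $V_1$ by conjugating this factorisation by a fixed path $V_1\dasharrow Y$ in the groupoid, functoriality gives $\rho(\varphi)=e$; since $G$ is generated by such $\varphi$, we conclude $G\subseteq\Ker\rho$. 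I expect this step to be the main obstacle: it requires extracting from the construction of $\rho$ that its non-trivial Sarkisov links genuinely live only on cubic del Pezzo fibrations over curves, and in particular are never forced into the factorisation of a birational map preserving a fibration over a surface.

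Finally, the inclusion $G\subseteq\Ker\rho$ is strict. Since $\rho$ restricts to a surjection from $\Bir(V_1/\PP^1)$ onto the infinite group $\bigast_\NN\ZZ/2$, its kernel there is a non-trivial proper subgroup $N\subseteq\Ker\rho$; I would locate in $N$ an element not lying in $G$ — for example a product of two distinct Bertini involutions of $V_1/\PP^1$ with equal image under $\rho$ — the non-membership being verified by a birational-rigidity argument on the generic cubic surface over $\CC(\PP^1)$, which (for a general $V_1$) admits no conic bundle structure over its base field, unlike the maps generating $G\cap\Bir(V_1/\PP^1)$. Together with $G\subseteq\Ker\rho\subsetneq\Bir_\CC(\PP^3)$ — the last inclusion since $\rho$ is surjective onto an infinite group — this establishes all the assertions, in particular the strict inclusion $G\subsetneq\Bir_\CC(\PP^3)$.
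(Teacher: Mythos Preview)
Your overall strategy matches the paper's: take the rational cubic del Pezzo fibration $V_1$, transport $\rho$ from Theorem~\ref{thm:grouphom} (via its construction in Theorem~\ref{thm:BirMori}) to $\Bir_\CC(\PP^3)$, and show $G\subseteq\Ker\rho$ by observing that any $\varphi$ preserving a rational fibration $\PP^3\dasharrow\PP^2$ can be carried to a conic bundle $Z'/\PP^2$ and then decomposed, via the relative Sarkisov program over the surface (Lemma~\ref{lem: relative Sarkisov}\ref{relative Sarkisov1}), into links none of which is of Bertini type. Your treatment of $\Aut_\CC(\PP^3)\subsetneq G$ is likewise the same as the paper's.

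There is a genuine gap in your final step, the strict inclusion $G\subsetneq\Ker\rho$. You propose to exhibit an element of $\Ker\rho\setminus G$ --- a product of two Bertini involutions with equal image --- and to certify non-membership in $G$ by a ``birational-rigidity argument on the generic cubic surface over $\CC(\PP^1)$''. But $G$ is generated by maps preserving \emph{arbitrary} rational fibrations $\PP^3\dasharrow\PP^2$, not by elements of $\Bir(V_1/\PP^1)$ preserving a conic-bundle structure on the generic fibre; rigidity of the generic cubic surface says nothing about whether your element can be written as a product of maps each preserving its own, possibly unrelated, fibration over a surface. No invariant distinguishing $G$ from $\Ker\rho$ is produced, so the argument as stated does not go through.

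The paper avoids this difficulty entirely with a one-line trick: once $G\subseteq\Ker\rho$ is established, one is free to \emph{modify} $\rho$ by composing with the projection $\bigast_{\NN}\ZZ/2\to\bigast_{I}\ZZ/2$ for any proper infinite subset $I\subsetneq\NN$. The new kernel strictly contains the old one (any preimage of a discarded generator now lies in it), hence strictly contains $G$, while the target is still a free product of countably many copies of $\ZZ/2$ and the map is still surjective. This replaces your rigidity argument with nothing more than the freedom to choose $\rho$.
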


Theorem \ref{thm:TameproblemBirP3} generalises the three-dimensional version of \cite[Theorem C]{BLZ}, which proves that $\Bir_\CC(\PP^3)$ is not generated by birational maps preserving a linear fibration $\PP^3\dasharrow \PP^2$ (this corresponds to the Tame problem for $\Bir_\CC(\PP^3)$, see \cite[\S 1.C]{BLZ} for more details on this question).

We also obtain the existence of another natural proper normal subgroup of $\Bir_{\CC}(\PP^3)$ in Theorem~\ref{Theorem:AlgSubgroups} below. For each $n\ge 1$, the subgroup of $\Bir_{\CC}(\PP^n)$ generated by all connected algebraic subgroups of $\Bir_\CC(\PP^n)$ is a normal subgroup $H_n\subset \Bir_{\CC}(\PP^n)$, that contains $\Aut_{\CC}(\PP^n)\cong\PGL_{n+1}(\CC)$ (for a definition and characterisations of algebraic subgroups of $\Bir_{\CC}(\PP^n)$, see for instance \cite{BF13}). The fact that $H_n$ is normal follows from the fact that the conjugate of any connected algebraic subgroup is a connected algebraic subgroup. The equality $H_1=\Bir_{\CC}(\PP^1)$ then follows from $\Aut_{\CC}(\PP^1)=\Bir_{\CC}(\PP^1)$, and the equality $H_2=\Bir_{\CC}(\PP^2)$ follows from the Noether-Castelnuovo theorem (Lemma~\ref{Lem:NormalSubAutP2}). However, our results show that $H_3$ is a strict normal subgroup of $\Bir_{\CC}(\PP^3)$:

\begin{maintheorem}\label{Theorem:AlgSubgroups}
The subgroup of $\Bir_{\CC}(\PP^3)$ generated by all connected algebraic subgroups of $\Bir_\CC(\PP^3)$ is a strict normal subgroup of  $\Bir_{\CC}(\PP^3)$.
\end{maintheorem}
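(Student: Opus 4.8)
The plan is to deduce Theorem~\ref{Theorem:AlgSubgroups} from Theorem~\ref{thm:TameproblemBirP3} together with one soft group-theoretic fact. Let $\rho\colon\Bir_\CC(\PP^3)\twoheadrightarrow\bigast\limits_\NN\ZZ/2$ be the surjective homomorphism of Theorem~\ref{thm:TameproblemBirP3}; since its target is non-trivial, $\Ker\rho$ is a proper normal subgroup of $\Bir_\CC(\PP^3)$. As recalled just before the statement, the subgroup $H_3\subseteq\Bir_\CC(\PP^3)$ generated by all connected algebraic subgroups is normal and contains $\Aut_\CC(\PP^3)\cong\PGL_4(\CC)$. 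So I would reduce everything to the inclusion $H_3\subseteq\Ker\rho$: granting it, one gets $\{1\}\neq\Aut_\CC(\PP^3)\subseteq H_3\subseteq\Ker\rho\subsetneq\Bir_\CC(\PP^3)$, which is exactly the assertion. Thus it suffices to show that $\rho(A)=\{1\}$ for every connected algebraic subgroup $A\subseteq\Bir_\CC(\PP^3)$.

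The key point I would use is that a connected algebraic group $A$ over $\CC$ admits no non-trivial homomorphism into a free product of groups of order $2$. First, $A$ is generated, as an abstract group, by divisible subgroups: for each element $X$ of the Lie algebra of $A$ the one-parameter subgroup $\{\exp(tX)\mid t\in\CC\}$ is a homomorphic image of the divisible group $(\CC,+)$, hence divisible, and the union of these subgroups is the image of the exponential map, which contains an (analytic) neighbourhood of the identity and therefore generates the connected group $A$. Second, any homomorphism from a divisible group to $\bigast\limits_\NN\ZZ/2$ is trivial: by the Kurosh subgroup theorem its image is a free product of copies of $\ZZ/2$ and of a free group, and a non-trivial group of this kind surjects onto $\ZZ/2$ (if a factor $\ZZ/2$ occurs) or onto $\ZZ$ (if the free part is non-trivial), whereas a divisible group has no quotient isomorphic to $\ZZ/2$ or to $\ZZ$. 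Combining the two, $\rho$ annihilates every divisible subgroup of $A$, hence annihilates $A$; this yields $H_3\subseteq\Ker\rho$ and finishes the proof.

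I expect there to be no genuine obstacle here: the argument is formal once Theorem~\ref{thm:TameproblemBirP3} (and behind it Theorem~\ref{thm:grouphom}) is in hand. The only slightly delicate point is the claim that $A$ is generated by divisible subgroups when $A$ is not affine; I would settle it either directly from the structure of connected complex Lie groups, or by reducing to the linear case via Chevalley's theorem, using that an extension of a divisible abelian group by a divisible abelian group is divisible and that the abstract and algebraic derived subgroups of a connected algebraic group agree, so that the abelianisation of $A$ is a divisible abelian group (a torus times a vector group in the linear case).
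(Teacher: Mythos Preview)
Your proof is correct and reaches the same conclusion as the paper, but by a genuinely different route. Both arguments reduce to showing $H_3\subseteq\Ker\rho$ for the homomorphism $\rho$ of Theorem~\ref{thm:TameproblemBirP3}. The paper proceeds geometrically: it invokes that every connected algebraic subgroup of $\Bir_\CC(\PP^3)$ is linear (\cite[Remark~2.21]{BF13}), hence generated by copies of $\mathbb{G}_a$ and $\mathbb{G}_m$, and then uses a separate lemma (based on Weil regularisation and a Rosenlicht/Bia\l{}ynicki-Birula slice argument) to show that every such $\mathbb{G}_a$ or $\mathbb{G}_m$ is conjugate in $\Bir_\CC(\PP^3)$ to a subgroup of $\Aut_\CC(\PP^3)$; since $\Aut_\CC(\PP^3)\subseteq\Ker\rho$ and $\Ker\rho$ is normal, this forces $H_3\subseteq\Ker\rho$. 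Your argument bypasses all of this geometry by observing that the target $\bigast_\NN\ZZ/2$ admits no non-trivial divisible subgroup (via Kurosh), while any connected complex algebraic group is abstractly generated by divisible one-parameter subgroups. This is slicker and avoids the regularisation lemma entirely; on the other hand, the paper's approach yields the sharper statement that $H_3$ lies in the normal closure of $\Aut_\CC(\PP^3)$, which is of independent interest. Your caveat about the non-affine case is in fact unnecessary here, since connected algebraic subgroups of $\Bir_\CC(\PP^3)$ are automatically linear, but your exponential-map argument works regardless.
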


\begin{question}
	Does the statements of Theorems \ref{thm:TameproblemBirP3} and \ref{Theorem:AlgSubgroups} hold for $\Bir_\CC(\PP^n)$ with $n\geqslant 4$?
\end{question}

\subsection{The strategy of the proof and a more detailed result}
Let us briefly describe how one can obtain the group homomorphisms of Theorems~\ref{thm:grouphom} and \ref{thm:TameproblemBirP3}. According to the Sarkisov program, any birational map between two Mori fiber spaces is a composition of birational maps, called Sarkisov links. Moreover, the relations between Sarkisov links are generated by so-called {\it elementary relations} (\cite[Theorem 4.29]{BLZ}, that we reproduce here in Theorem~\ref{thm: sarkisov}). We follow the strategy of \cite{BLZ}, which studies for a Mori fiber space $X$  the {\it groupoid} $\BirMori(X)$ of all birational maps between Mori fiber spaces birational to $X$ and we construct as in \cite{BLZ} a groupoid morphism from $\BirMori(X)$ to a free product of $\ZZ/2\ZZ$ whose restriction to $\Bir(X)$ gives the desired group homomorphisms of Theorems~\ref{thm:grouphom} and \ref{thm:TameproblemBirP3}. 

The groupoid morphisms constructed in \cite{BLZ} send every Sarkisov link in $\BirMori(X)$ onto the identity, except some special Sarkisov links between conic bundles which are complicated enough. We follow the same approach, but focus on Sarkisov links between del Pezzo fibrations. The ones we are interested in are called {\it Bertini links} in the sequel (see Definition~\ref{def: bertini link}). We put an equivalence relation on such links (Definition~\ref{defi:BertinilinksequiDefi}) and associate to every Bertini link $\chi$ an integer number $\inv(\chi)$, the \emph{genus} of $\chi$ (see Definition~\ref{def: genus}), which measures the ``complexity'' of this link and only depends on the equivalence class. We then associate to each Mori fiber space $X$ the set $\Bertini_X$ of all equivalence classes of Bertini links between varieties birational to $X$ and denote by  $\Bertini_X^{\ge g}\subset \Bertini_X$ the subset of equivalence classes of Bertini links having genus $\ge g$.

The advantage of working only with Bertini links of ``high complexity'' is that they occur only in very simple elementary relations. Altogether, in Section \ref{sec: existence of hom} it allows us to prove the following statement:

\begin{maintheorem}\label{thm:BirMori}There is an integer $g\ge 0$ such that for each complex del Pezzo fibration $X\to B$ over a curve, there exists a group homomorphism 
	\[
	\Bir(X)\to\bigast\limits_{\Bertini_X^{\ge g}}\ZZ/2
	\]
	which is the restriction of a groupoid homomorphism $\BirMori(X)\to\bigast\limits_{\Bertini_X^{\ge g}}\ZZ/2$ that sends every Sarkisov link $\chi$ of Bertini type with $\inv(\chi)\ge g$ to the generator of $\ZZ/2$ indexed by equivalence class of $\chi$, and all other Sarkisov links and all automorphisms of Mori fiber spaces birational to $X$ onto the trivial element. 
	\end{maintheorem}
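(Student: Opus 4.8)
The plan is to follow the general machinery of \cite{BLZ} (recalled in Theorem~\ref{thm: sarkisov}): since the relations between Sarkisov links are generated by elementary relations, to build a groupoid homomorphism $\BirMori(X)\to\bigast_{\Bertini_X^{\ge g}}\ZZ/2$ it suffices to (i)~prescribe the image of every Sarkisov link in a way compatible with the involution ``reverse the link'' --- here we send a Bertini link $\chi$ with $\inv(\chi)\ge g$ to the generator indexed by its equivalence class, and all other links (and all automorphisms of Mori fibre spaces) to the identity --- and (ii)~check that this prescription kills every elementary relation. Because a reversed Bertini link is again a Bertini link of the same genus and equivalent to the original (this should be immediate from Definitions~\ref{def: bertini link}, \ref{defi:BertinilinksequiDefi}, \ref{def: genus}), part~(i) is automatic and the generators being of order $2$ is exactly what makes the assignment well defined on both orientations. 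The restriction to $\Bir(X)$ then gives the asserted group homomorphism, and its behaviour on Bertini links of genus $\ge g$ is built in by construction.

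The substance of the argument is part~(ii): for a suitable threshold $g$, every elementary relation, read as a cyclic word in Sarkisov links, must map to the trivial element of $\bigast_{\Bertini_X^{\ge g}}\ZZ/2$. First I would recall (or quote from \cite{BLZ}) the classification of elementary relations around a given rank-$3$ fibration: each arises from a two-ray game over a base of relative Picard rank $2$, so it is a short cyclic word and the Sarkisov links occurring in it are tightly constrained by the geometry of that base. The key structural input --- this is where the genus enters --- is a lemma to the effect that a Bertini link of genus $\ge g$ can occur in an elementary relation only of a very restricted shape: namely a relation of the form $\chi' \chi$ with $\chi$ Bertini of large genus, $\chi'$ its reverse (so that the relation is $\chi^{-1}\chi = \id$), or a relation in which the only links of Bertini type with genus $\ge g$ are $\chi$ and a link equivalent to~$\chi$. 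Granting such a lemma, each elementary relation maps either to the empty word, or to $xx = 1$ for a single generator $x$, or to a product of generators all equal to the same $x$ and in even number --- in every case the trivial element. Hence the assignment descends to $\BirMori(X)$.

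The main obstacle, and the real content of Section~\ref{sec: existence of hom}, is proving that ``restricted shape'' lemma: bounding, uniformly in $X\to B$ and in the relation, the number and the genera of the Bertini links that can appear in a single elementary relation, and showing no two inequivalent high-genus Bertini links can coexist in one. Concretely this requires analysing the possible rank-$3$ Mori fibre spaces $Y\to S$ over a curve (or over $\Spec\CC$) whose two-ray game produces Bertini links, computing how the genus of a Bertini link changes (or is controlled) along the other links of the relation --- in particular that a single ``extra'' blow-up or flop in the relation can only drop the genus by a bounded amount --- and using the degree-$3$ hypothesis to see that once the genus is large enough the del Pezzo fibration structure is essentially rigid along the relation, forcing the partner link to be the reverse of $\chi$ up to equivalence. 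The choice of $g$ then comes out of these bounds. A secondary technical point is verifying that the equivalence relation of Definition~\ref{defi:BertinilinksequiDefi} and the genus are genuinely invariant under the moves appearing in elementary relations, so that the indexing set $\Bertini_X^{\ge g}$ and the map to it are well defined; once the classification of elementary relations involving high-genus Bertini links is in hand, this should be a finite check.
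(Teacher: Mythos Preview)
Your overall strategy is correct and matches the paper's: use Theorem~\ref{thm: sarkisov} to reduce to checking that every elementary relation maps to the identity, and prove a ``restricted shape'' lemma saying that in any elementary relation the Bertini links of genus $\ge g$ occur only in equivalent pairs. But your description of how that lemma is proved contains two genuine misconceptions that would leave the argument incomplete.

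First, the degree-$3$ hypothesis plays no role whatsoever in Theorem~\ref{thm:BirMori}; the statement and its proof apply to del Pezzo fibrations of every degree (degree~$3$ enters only in Section~\ref{sec: image}, to show the image is large). So ``using the degree-$3$ hypothesis to see that the del Pezzo fibration structure is essentially rigid'' is not available and not the mechanism. Second, the paper does not track ``how the genus changes along the other links of the relation''. Instead it splits according to the base $B$ of the dominating rank-$3$ fibration $T/B$. When $B$ is a point (Proposition~\ref{prop: no relations for links of type 1}), the input is BAB: boundedness of weak Fano terminal threefolds plus Castelnuovo's bound gives a uniform $N$ such that any curve blown up inside such a relation has genus $\le N$, and one takes $g>N$ to exclude this case outright. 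When $B$ is a curve (Proposition~\ref{prop: relation form}), the key structural fact is that the generic fibre of $T/B$ is already a del Pezzo surface of degree~$1$ and Picard rank~$2$ (since $T$ dominates $Y_1$ and cannot further blow up a degree-$1$ del Pezzo); this forces every intermediate base $B_i$ to equal $B$, all links to be of type~\II, and a two-ray combinatorial bookkeeping then pins the relation down to exactly four links $\chi_4\chi_3\chi_2\chi_1=\id$, with $\chi_1,\chi_3$ equivalent Bertini links and $\chi_2,\chi_4$ isomorphisms on generic fibres (hence not Bertini). No genus-drop estimate is used --- the rigidity comes from the Picard-rank-$2$ structure of the degree-$1$ generic fibre, which you do not mention.
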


Theorem \ref{thm:grouphom} will follow from Theorem \ref{thm:BirMori}. Note that in Theorem \ref{thm:BirMori} there is no restriction on the degree of the del Pezzo fibration $X/B$, but we only prove that the image is non-trivial for del Pezzo fibrations of degree $3$. This is done in Section \ref{sec: image}, where we show that for a del Pezzo fibration $X/B$ of degree $3$, the image by the group homomorphism of Theorem~\ref{thm:BirMori} of a natural birational involution associated with a $2$-section of $X/B$ of genus $g'\ge g$ (where $g$ is as in Theorem \ref{thm:BirMori}) is the generator of one of the $\ZZ/2$, indexed by a Sarkisov link of genus $g'$. It then suffices to show that the genus of $2$-sections in $X/B$ is not bounded (a fact already proven in \cite{BCDP}) to show that the image of $\Bir(X)$ under the group homomorphism surjects onto a free product of $\ZZ/2$ indexed by an infinite set.

\begin{rem}
Theorem \ref{thm:BirMori}, when restricted to del Pezzo fibrations of degrees 2 and 1, gives an empty result. Indeed, there is no Bertini link on a del Pezzo fibration of degree $1$ and the genus of any Bertini link on a del Pezzo fibration of degree $2$ is equal to the genus of the base curve $B$, as it corresponds to the genus of a section blown-up. Whether $\Bir(X)$ is a simple group or not for del Pezzo fibrations of degrees 2 and 1 is an interesting question, open for the moment. The techniques developed here do not easily extend to these groups. One should consider other links and study possible relations to understand the question.
\end{rem}

\subsection{Acknowledgements} The authors would like to thank Hamid Ahmadinezhad, Ivan Cheltsov, St\'ephane Lamy and Yuri Prokhorov for interesting discussions during the preparation of this text. We would also like to thank the referees for helpful suggestions that improved the exposition of this paper.

\section{Preliminaries on the Sarkisov program}

\subsection{Notation and conventions}

Throughout the paper, we stick to the following agreements:

\begin{itemize}
	\item All the varieties and maps are defined over $\CC$, if not stated otherwise;
	\item The plain arrows ($\to$) denote morphisms; the dashed arrows ($\dashrightarrow$) denote rational maps; the dotted arrows $(\ps)$ denote pseudo-isomorphisms (isomorphisms outside codimension 2 closed subsets).
	\item We write $X\simeq Y$ if two varieties are isomorphic, and $X\approx Y$ if they are pseudo-isomorphic.
	\item A {\it birational contraction} is a birational map such that the inverse does not contract any divisor.
\end{itemize}

Below we very briefly recall some standard notions of the Minimal Model Program (MMP in the sequel) which are used in in this paper. Our main references are \cite{KM} and \cite{BCHM}; see also \cite[\S 2]{BLZ}.

\subsection{Rank r fibrations}

At the heart of this paper, as well as in \cite{BLZ}, lies the notion of {\it rank~$r$ fibration}. It puts together the notions of terminal Mori fiber space, of Sarkisov links and of elementary relations between Sarkisov links, for $r=1, 2$ and $3$ respectively. 

\begin{mydef}
	Let $r\ge 1$ be an integer.	A morphism $\eta\colon X\to B$ is a \emph{rank~$r$ fibration} if the following conditions are satisfied:
	\begin{enumerate}
		\item $X/B$ is a Mori dream space;
		\item $\dim X > \dim B \ge 0$ and $\rho(X/B) = r$;
		\item $X$ is $\QQ$-factorial and terminal, and for any divisor $D$ on $X$, the output of any $D$-MMP from $X$ over $B$ is  still $\QQ$-factorial and terminal;
		\item $B$ has klt singularities.
		\item $-K_X$ is $\eta$-big.
	\end{enumerate}
\end{mydef}

\begin{rem}\label{rem: Mori dream}
	For the definition of a Mori dream space we refer to \cite[Definition 2.3]{BLZ}, which is a relative version of \cite[Definition 1.10]{HuKeel}. Standard examples include toric varieties and Fano varieties (with $B=\pt$). For us, the reason to work with Mori dream spaces is that they behave well with respect to the MMP. Namely, if $X/B$ is a Mori dream space, then for any divisor $D$ one can run a $D$-MMP on $X$ over $B$, and there exists only finitely many possible outputs of such a process (see \cite[Proposition 1.11]{HuKeel} or \cite[Theorem 5.4]{KKL}). 
\end{rem}

Further, we say that a rank~$r$ fibration $X/B$ \emph{factorises through} a rank~$r'$ fibration $X'/B'$, or that \emph{$X'/B'$ is dominated by $X/B$}, if the fibrations $X/B$ and $X'/B'$ fit in a commutative diagram 
\[
\begin{tikzcd}[link]
X \ar[rrr] \ar[dr,dashed] &&& B \\
& X' \ar[r] & B' \ar[ur]
\end{tikzcd}
\]
where $X \rat X'$ is a birational contraction, and $B' \to B$ is a morphism with connected fibers. Note that $r \ge r'$.

\begin{prop}[{\cite[Lemma 3.3]{BLZ}}]
	Let $\eta: X\to B$ be a surjective morphism between normal varieties. Then $X/B$ is a rank $1$ fibration if and only if $X/B$ is a terminal Mori fiber space.
\end{prop}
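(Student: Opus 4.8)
The plan is to verify the five conditions in the definition of a rank $1$ fibration against the definition of a terminal Mori fiber space, in both directions; apart from bookkeeping, the only two points that will require a genuine argument are the Mori dream space condition~(1) and the equivalence, under the hypothesis $\rho(X/B)=1$, between the $\eta$-bigness and the $\eta$-ampleness of $-K_X$.

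First I would treat the implication ``terminal Mori fiber space $\Rightarrow$ rank $1$ fibration''. Here $X$ is $\QQ$-factorial and terminal, $\eta$ has connected fibers, $\dim X>\dim B$, $\rho(X/B)=1$, and $-K_X$ is $\eta$-ample, so conditions~(2) and~(5) are immediate and condition~(4) follows from the standard fact that the base of a klt Mori fiber space is klt (a consequence of the canonical bundle formula, see \cite{KM}). For condition~(1) I would invoke the relative version of \cite{BCHM}: since $(X,0)$ is klt and $-K_X$ is $\eta$-ample, hence $\eta$-big and $\eta$-nef, the fibration $X/B$ is a Mori dream space (cf.\ the relative formalism in \cite[\S 2]{BLZ} and \cite{HuKeel}). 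For condition~(3) I would use that $\rho(X/B)=1$ forces $\overline{\mathrm{NE}}(X/B)$ to be a single ray, spanned by a curve $\ell$ contained in a fiber of $\eta$ with $K_X\cdot\ell<0$; hence for any divisor $D$ on $X$ a $D$-MMP over $B$ is either trivial (if $D$ is $\eta$-nef) or terminates at once with the fiber-type contraction $\eta$ itself, so its output is $X$, which is $\QQ$-factorial and terminal.

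For the converse, given a rank $1$ fibration $X/B$, conditions~(2) and~(3) already supply that $X$ is normal, $\QQ$-factorial and terminal with $\dim X>\dim B$ and $\rho(X/B)=1$, and connectedness of the fibers of $\eta$ is part of, or an immediate consequence of, the Mori dream space hypothesis~(1); so it remains to upgrade ``$-K_X$ is $\eta$-big'' to ``$-K_X$ is $\eta$-ample''. I would argue as follows: if $F$ denotes a general fiber of $\eta$, then $\dim F\ge 1$ and $-K_X|_F$ is big, while on any projective variety a divisor whose negative is nef cannot be big (write the divisor as ample plus effective and intersect with a power of the ample class); hence $K_X|_F$ is not nef, so $K_X$ is not $\eta$-nef, and since $\rho(X/B)=1$ the only remaining possibility for the class of $K_X$ over $B$ is that $-K_X$ be $\eta$-ample.

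The hard part will be condition~(1) in the first implication: establishing that a terminal Mori fiber space of relative Picard rank $1$ is automatically a Mori dream space over its base is the one step that genuinely relies on the finiteness theorems of \cite{BCHM}, whereas all the other verifications reduce to elementary consequences of $\rho(X/B)=1$.
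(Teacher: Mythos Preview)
The paper does not give a proof of this proposition; it is simply quoted from \cite[Lemma~3.3]{BLZ} and stated without argument. So there is nothing in the present paper to compare your proof against.

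That said, your verification is correct and is precisely the natural one: the two nontrivial points are exactly the ones you isolate, namely that a terminal Mori fiber space is a relative Mori dream space (which follows from \cite{BCHM} since $-K_X$ is relatively ample, hence relatively big and nef, and $X$ is klt), and that $\eta$-bigness of $-K_X$ together with $\rho(X/B)=1$ forces $\eta$-ampleness. Your argument for the latter via restriction to a general fiber is fine. Two minor remarks: the fact that the base of a klt Mori fiber space is klt is more accurately attributed to Ambro and Fujino (canonical bundle formula) than to \cite{KM}; and connectedness of the fibers of $\eta$ is indeed built into the relative Mori dream space formalism of \cite[\S2]{BLZ}, so your appeal to condition~(1) for this is legitimate.
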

\begin{rem}
	In threefold case, there are three possible cases for a Mori fiber space $X/B$:
	\begin{enumerate}
		\item If $\dim(B)=0$, then $X$ is a Fano variety with $\rho(X)=1$;
		\item If $\dim(B)=1$, then $X$ is a del Pezzo fibration over a curve $B$;
		\item If $\dim(B)=2$, then $X$ is a conic bundle over the surface $B$.
	\end{enumerate}
\end{rem}

In the following lemma we gather some properties of rank $r$ fibrations which we will need later.

\begin{lem}\label{lem: prop of fibrations}
	Let $r\ge 1$ and let $\eta: X\to B$ be a rank $r$ fibration. Then, the following hold:
	\begin{enumerate}
		\item\label{propfib1} If $X'$ is obtained from $X$ by making a log-flip $($resp. a divisorial contraction$)$ over $B$, then $X'/B$ is a rank $r$ fibration $($resp. a rank $(r-1)$-fibration$)$.
		\item\label{propfib2} For a general point $b\in B$ the fiber $X_b=\eta^{-1}(b)$ is pseudo-isomorphic to a weak Fano terminal variety, and the curves in $X_b$ which have a non-positive intersection with $K_{X_b}$ cover a subset of codimension at least $2$ in $X_b$.
	\end{enumerate}
\end{lem}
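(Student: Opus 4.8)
The statement splits into two essentially independent parts, and I would treat them separately. For part~\eqref{propfib1}, the plan is to verify the five axioms of a rank~$r$ fibration one at a time for $X'/B$, using that $X\rat X'$ is a birational map of a very restricted kind. The Mori dream space property \textit{(1)} is preserved under running any $D$-MMP over $B$ (this is exactly the good behaviour recalled in Remark~\ref{rem: Mori dream}, since $X'$ is itself an output of such an MMP up to the contracted divisor), so \textit{(1)} is essentially formal. For \textit{(2)}, a log-flip over $B$ is a pseudo-isomorphism, hence preserves $\rho(\cdot/B)=r$; a divisorial contraction over $B$ drops the relative Picard number by exactly one, giving $\rho(X'/B)=r-1$. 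For \textit{(3)}, the point is that a $D$-MMP from $X'$ over $B$ can be reinterpreted as (the tail of) a $D'$-MMP from $X$ over $B$ for a suitable $D'$, so the hypothesis on $X$ transfers; $\QQ$-factoriality and terminality of $X'$ itself follow because log-flips and divisorial contractions of terminal $\QQ$-factorial varieties in the MMP stay terminal and $\QQ$-factorial. Axiom \textit{(4)} on $B$ is untouched since $B$ does not change. Axiom \textit{(5)}, the $\eta$-bigness of $-K_{X'}$, is the one requiring a little care: $-K_X$ being $\eta$-big means it is big on the generic fibre; under a pseudo-isomorphism over $B$ bigness of the canonical class on the generic fibre is preserved, and for a divisorial contraction $X\to X'$ over $B$ the pushforward of a big divisor on the generic fibre is big, so $-K_{X'}$ remains $\eta$-big. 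I would phrase this via the generic fibre $X_{\CC(B)}$, where everything reduces to standard facts about the MMP over a point.

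For part~\eqref{propfib2}, the plan is to pass to the generic fibre $X_\eta := X\times_B \Spec \CC(B)$, which by the axioms is a $\QQ$-factorial terminal variety with $-K_{X_\eta}$ big and $\rho=r$ over the (trivial) base; one then spreads out. The first claim is that $X_\eta$ is pseudo-isomorphic to a weak Fano terminal variety: run a $(-K_{X_\eta})$-MMP (equivalently a $K_{X_\eta}$-MMP), which terminates since $X_\eta$ is a Mori dream space; because $-K_{X_\eta}$ is big, the MMP cannot end with a Mori fibre space of positive-dimensional base with negative canonical class in the fibre direction in a way that destroys bigness — more precisely, the output is a variety on which $-K$ is big and nef, i.e.\ weak Fano, and the steps are flips (pseudo-isomorphisms) plus possibly divisorial contractions; the subtlety is that divisorial contractions are not pseudo-isomorphisms, so I would need the \emph{no divisorial contractions} phenomenon here, which holds precisely because $\rho(X_\eta)$ small together with terminality and bigness forces the relevant extremal rays to be flipping or the variety to already be weak Fano — this is where I would lean on $r$ being small (the lemma is used for $r\le 3$) or cite the analogous statement from \cite{BLZ}. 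Granting pseudo-isomorphism to a weak Fano $Y$, the locus of curves $C$ with $K_Y\cdot C\le 0$: on a weak Fano terminal threefold such curves are covered by the fibres of the anticanonical (crepant) contraction $Y\to Y'$ to the anticanonical model, and this contracted locus has codimension $\ge 2$ by terminality (a terminal threefold has no divisorial crepant contractions, only flopping curves). Transporting this back across the pseudo-isomorphism to $X_\eta$ (pseudo-isomorphisms are isomorphisms in codimension~$1$, so they preserve ``covered by a codimension $\ge 2$ locus''), and then spreading out over a dense open subset of $B$ to get the statement for a general fibre $X_b$, finishes the argument.

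The main obstacle, as flagged above, is the control of divisorial contractions in the $(-K)$-MMP used for \eqref{propfib2}: I need that running this MMP on the generic fibre produces a \emph{pseudo-isomorphism} to a weak Fano, not merely a birational contraction. In full generality a $K$-MMP does contract divisors, so the argument genuinely uses the special geometry of low Picard rank (for del Pezzo fibrations of degree $3$, the generic fibre is a cubic surface over $\CC(B)$, where this is classical) and terminality; I would either restrict to the cases actually needed, import the corresponding lemma from \cite{BLZ}, or argue that any extremal contraction of a $\QQ$-factorial terminal variety with $-K$ big which is divisorial would produce an exceptional divisor on which $-K$ restricts in a way contradicting either terminality of the image or the Mori-dream/bigness hypotheses. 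The rest — verifying the five axioms in \eqref{propfib1} and the codimension count via the anticanonical contraction in \eqref{propfib2} — is routine MMP bookkeeping.
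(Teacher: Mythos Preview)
The paper's proof is a bare citation: \ref{propfib1} is \cite[Lemma~3.4(1)]{BLZ} and \ref{propfib2} is \cite[Corollary~3.6]{BLZ}. So you are in effect reconstructing the argument from \cite{BLZ}, and your outline for \ref{propfib1} is fine and matches what happens there.

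For \ref{propfib2} your plan has the right shape, but the ``main obstacle'' you flag is not resolved the way you suggest, and one of your auxiliary claims is false. The absence of divisorial contractions in the $(-K)$-MMP has nothing to do with $r\le 3$ or with the geometry of cubic surfaces; it is forced directly by axiom~(3) of the definition, which you are underusing. If some step $f\colon X_i\to X_{i+1}$ of the $(-K_X)$-MMP over $B$ were a divisorial contraction with exceptional divisor $E$, the contracted ray would satisfy $K_{X_i}\cdot R>0$; writing $K_{X_i}=f^*K_{X_{i+1}}+aE$ and pairing with a contracted curve $C$ (so $E\cdot C<0$) gives $a<0$, so $X_{i+1}$ is not even canonical. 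Since $X_{i+1}$ is the output of a $D$-MMP from $X$ over $B$ (with $D=-K_X$, stopping at that step), this contradicts axiom~(3). Hence the $(-K)$-MMP consists only of flips, and as $-K$ is big over $B$ it terminates in a weak Fano model pseudo-isomorphic to the generic fibre. Your third listed option (``contradicting terminality of the image'') is exactly this argument; you should recognise it as \emph{the} argument and drop the appeal to small Picard rank, which is misleading.

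Second, your assertion that ``a terminal threefold has no divisorial crepant contractions, only flopping curves'' is incorrect: $\PP^1\times S$ with $S$ a smooth weak del Pezzo surface containing a $(-2)$-curve is a smooth (hence terminal) weak Fano threefold whose anticanonical morphism contracts a divisor. The smallness of the anticanonical morphism on the weak Fano model is, once again, a consequence of axiom~(3): a crepant divisorial contraction has exceptional discrepancy $0$, and its target --- which is reached from $X$ by a suitable $D$-MMP over $B$, since $X/B$ is a Mori dream space --- would then be canonical but not terminal. So axiom~(3) is doing essentially all the work in \ref{propfib2}; once you use it twice as above, the proof is short and uniform in $r$.
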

\begin{proof}
	\ref{propfib1} is \cite[Lemma 3.4(1)]{BLZ} and \ref{propfib2} is \cite[Corollary 3.6]{BLZ}.
\end{proof}

\subsection{Sarkisov links}\label{Sec:SarkisovLinks}

The notion of a rank $2$ fibration corresponds to the notion of Sarkisov link. Namely, a 2-ray game allows to show \cite[Lemma 3.7]{BLZ} that a rank~$2$ fibration $Y/B$ factorises through exactly two rank~$1$ fibrations $X_1/B_1, X_2/B_2$ (up to isomorphisms), which both fit into a diagram
\[
\begin{tikzcd}[link]
\ar[dd]  & Y \ar[l,dotted] \ar[r,dotted] & \ar[dd] \\ \\
\ar[dr] && \ar[dl] \\
& B &
\end{tikzcd}
\]
where the top dotted arrows are sequences of log-flips, and the other four arrows are morphisms of relative Picard rank~$1$. In this situation we say that the birational map $\chi: X_1\rat X_2$ is a {\it Sarkisov link}. In the diagram above, there are two possibilities for the sequence of two morphisms on each side of the diagram: either the first arrow is a Mori fiber space, or it is a divisorial contraction and then the second arrow is a Mori fiber space. Thus, we have $4$ possibilities, which correspond to the classical definition of \emph{Sarkisov links of type \I, \II, \III{ and }\IV}, as illustrated on Figure~\ref{fig:SarkisovTypes}. Note that the varieties $Y_1,Y_2$ on the figure are $\mathbb{Q}$-factorial terminal varieties pseudo-isomorphic to $Y$, obtained via sequences of anti-flips/flops/flips, steps of the MMP.

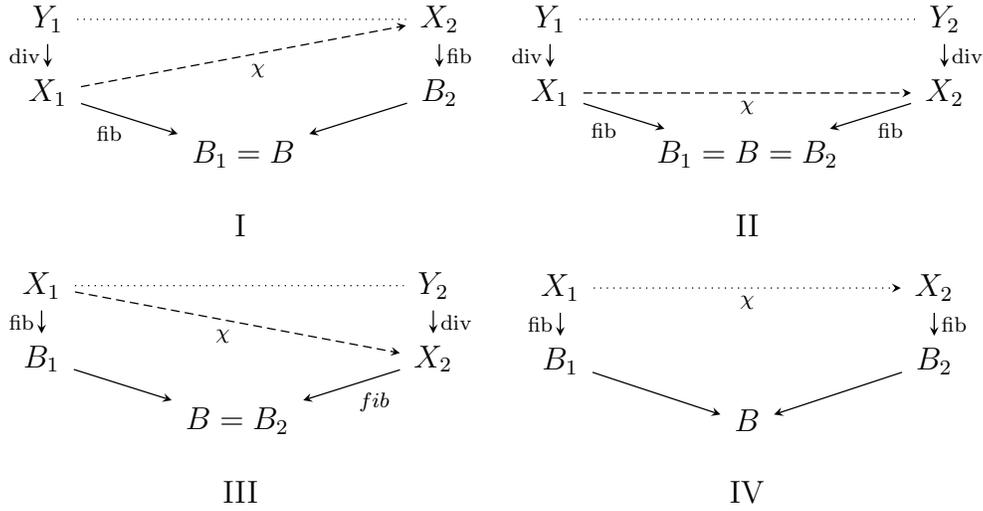
\begin{figure}[ht]
\[
{
	\def\arraystretch{2.2}
	\begin{array}{cc}
	\begin{tikzcd}[ampersand replacement=\&,column sep=1.3cm,row sep=0.16cm]
	Y_1\ar[dd,"{\rm div}",swap]  \ar[rr,dotted,-] \&\& X_2 \ar[dd,"{\rm fib}"] \\ \\
	X_1 \ar[uurr,"\chi",dashed,swap] \ar[dr,"{\rm fib}",swap] \&  \& B_2 \ar[dl] \\
	\& B_1 = B \&
	\end{tikzcd}
	&
	\begin{tikzcd}[ampersand replacement=\&,column sep=.8cm,row sep=0.16cm]
	Y_1\ar[dd,"{\rm div}",swap]  \ar[rr,dotted,-] \&\& Y_2\ar[dd,"{\rm div}"] \\ \\
	X_1 \ar[rr,"\chi",dashed,swap] \ar[dr,"{\rm fib}",swap] \&  \& X_2 \ar[dl,"{\rm fib}"] \\
	\& B_1 = B = B_2 \&
	\end{tikzcd}
	\\
	\mathrm{I} & \II
	\\
	\begin{tikzcd}[ampersand replacement=\&,column sep=1.3cm,row sep=0.16cm]
	X_1 \ar[ddrr,"\chi",dashed,swap] \ar[dd,"{\rm fib}",swap]  \ar[rr,dotted,-] \&\& Y_2\ar[dd,"{\rm div}"] \\ \\
	B_1 \ar[dr] \& \& X_2 \ar[dl,"fib"] \\
	\& B = B_2 \&
	\end{tikzcd}
	&
	\begin{tikzcd}[ampersand replacement=\&,column sep=1.7cm,row sep=0.16cm]
	X_1 \ar[rr,"\chi",dotted,swap] \ar[dd,"{\rm fib}",swap]  \&\& X_2 \ar[dd,"{\rm fib}"] \\ \\
	B_1 \ar[dr] \& \& B_2 \ar[dl] \\
	\& B \&
	\end{tikzcd}
	\\
	\III & \IV 
	\end{array}
}
\]
\caption{The four types of Sarkisov links.}
\label{fig:SarkisovTypes}
\end{figure}

The dashed arrow are pseudo-isomorphisms, the plain arrows are morphisms of relative Picard rank 1, and the arrows labeled by ``div'' are divisorial contractions. 

\subsection{Crucial example: Bertini links}

In this paper, one particular family of Sarkisov links will be especially important for us. Those are the only links which will be sent to non-trivial elements by the group homomorphism of Theorem \ref{thm:BirMori}. Let us now define this kind of links.

\begin{mydef}\label{def: bertini link}
	We say that a Sarkisov link $\chi: X_1\dashrightarrow X_2$ is of {\it Bertini type} (or $\chi$ is just a {\it Bertini link}) if 
	\begin{enumerate}
		\item $\chi$ has type $\II$ with $B$ being a curve and $X_1,X_2$ being of dimension $3$;
		\item $\eta_1$ is a blow-up of a curve $\Gamma_1\subset X_1$ such that $\Gamma_1\to B$ is surjective and the generic fiber of $Y_1/B$ is a del Pezzo surface of degree 1 (see Figure \ref{fig: SarkisovDP}).
	\end{enumerate}
\end{mydef}
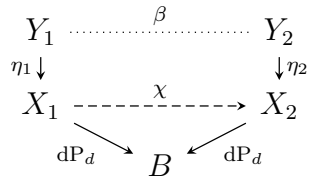
\begin{figure}[h!]
	\[
	{
		\begin{tikzcd}[ampersand replacement=\&,column sep=.8cm,row sep=0.16cm]
		Y_1\ar[dd,"\eta_1",swap]  \ar[rr,"\beta",dotted,-] \&\& Y_2\ar[dd,"\eta_2"] \\ \\
		X_1 \ar[rr,"\chi",dashed] \ar[dr,"\dP_d",swap] \&  \& X_2 \ar[dl,"\dP_d"] \\
		\& B\&
		\end{tikzcd}
	}
	\]
	\caption{A Bertini link}
	\label{fig: SarkisovDP}
\end{figure}
\begin{rem}\label{rem:BertinilinkBertiniInvolution}
The definition implies that the birational map between the generic fibers of $X_1/B$ and $X_2/B$ induced by $\chi$ is equal, up to an isomorphism of the target, to a birational involution that lifts to the Bertini involution on the generic fiber of $Y_1/B$, and implies that $X_1/B$ and $X_2/B$ are del Pezzo fibrations of the same degree $d\geqslant 2$ (follows from \cite[Theorem 2.6]{Isk1996}). This motivates the terminology. Moreover, this implies that $\eta_2$ is the blow-up of a curve $\Gamma_2$ which is birational to~$\Gamma_1$.
\end{rem}

\begin{mydef}\label{defi:BertinilinksequiDefi}
We say that two Bertini links $\chi: X_1\dashrightarrow X_2$ and $\chi': X_1'\dashrightarrow X_2'$ over $B$ and $B'$ respectively are {\it equivalent via birational maps $\psi_1$ and $\psi_2$ $($or simply equivalent$)$}, if there exists a commutative diagram 
\[\xymatrix@R=1mm@C=2cm{
	X_1\ar@{-->}[rr]^{\chi}\ar@{-->}[dd]_{\psi_1}\ar[dr] && X_2\ar@{-->}[dd]^{\psi_2} \ar[dl]  \\
	& B\ar[dd]^(.3){\psi} \\
	X_1'\ar[dr]\ar@{-->}[rr]^(.35){\chi'}&& X_2'\ar[dl]\\
	& B'&&
}\]
where for each $i\in \{1,2\}$, $\psi_i$ is a birational map inducing an isomorphism between the generic fibers of the del Pezzo fibrations $X_i/B$ and $X_i'/B'$, and where $\psi\colon B\to B'$ is an isomorphism. The equivalence class of $\chi$ will be denoted $[\chi]$. 

For each variety $X$, we denote by $\Bertini_X$ the set of equivalence classes of Bertini links $\chi: X_1\dashrightarrow X_2$, where $X_1$ and $X_2$ are birational to $X$. 
\end{mydef}

\begin{mydef}\label{def: genus}
	Let $\chi$ be a Bertini link. The {\it genus} $\inv(\chi)$ of $\chi$ is the geometric genus of the curve $\Gamma_1$ blow up by $\eta_1$:
	\[
	\inv(\chi)=\inv(\Gamma_1).
	\]
\end{mydef}
\begin{rem}\label{rem: Bertini has the same genus}
	Equivalent Bertini links have the same genus. Indeed, taking the notation of Definitions~\ref{def: bertini link} and \ref{defi:BertinilinksequiDefi}, if $\chi$ and $\chi'$ are equivalent via $\psi_1$ and $\psi_2$, then the restriction of $\psi_1$ gives a birational map between the curve $\Gamma_1\subset X_1$ blown-up by $\eta_1$ and the curve $\Gamma_1'\subset X_1'$ blown-up by $\eta_1'$.  One can thus define $\Bertini_X^{\ge g}\subset \Bertini_X$ to be the subset of equivalence classes of Bertini type links $\chi$ such that $\inv(\chi)\ge g$. 
	
	Moreover, the curves $\Gamma_1$ and $\Gamma_2$ associated to a Bertini link $\chi$ are birational (see Remark~\ref{rem:BertinilinkBertiniInvolution}), so we have $\inv(\chi)=\inv(\chi^{-1})$ for each Bertini link $\chi$.\end{rem}

\subsection{Elementary relations}

Finally, the notion of rank~$3$ fibration recovers the notion of an {\it elementary relation} between Sarkisov links, introduced in \cite{LZ} and \cite{BLZ}.

\begin{mydef}\label{def:Tequivalent}
	Let $X/B$ and $X'/B'$ be two rank~$r$ fibrations, and $T \rat X$, $T \rat X'$ two birational maps from the same variety $T$.
	We say that $X/B$ and $X'/B'$ are \emph{$T$-equivalent} (the birational maps being implicit) if there exist  a pseudo-isomorphism $X \ps X'$ and an isomorphism $B \iso B'$ such that all these maps fit in a commutative diagram:
	\[
	\begin{tikzcd}[link]
	& T \ar[dl, dashed] \ar[dr, dashed] \\
	X \ar[dd] \ar[rr,dotted] &  & X' \ar[dd] \\ \\
	B  \ar[rr,"\sim"] & & B'  \\ 
	\end{tikzcd}
	\]
\end{mydef}

As every rank~$3$ fibration $T/B$ is a Mori dream space, there are only finitely many rank~1 and rank~$2$ fibrations dominated by $T/B$ (see Remark \ref{rem: Mori dream}). Moreover, if $T/B$ factorises through a rank~1 fibration $X/B'$, then up to $T$-equivalence there are exactly two rank~$2$ fibrations which factorise through $X/B'$ and dominated by $T/B$ \cite[Lemma 4.2]{BLZ}. From these two facts one can deduce the following statement:

\begin{prop}[{\cite[Proposition 4.3]{BLZ}}]\label{pro:from T3}
	Let $T/B$ be a rank~$3$ fibration.
	Then there are only finitely many Sarkisov links $\chi_i$ dominated by $T/B$, and they fit in a relation
	\[
	\chi_t \circ \dots \circ \chi_1 = \id.
	\]
\end{prop}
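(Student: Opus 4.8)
The plan is to extract the finite list of Sarkisov links dominated by $T/B$ from the combinatorics of the 2-ray games inside the rank~$3$ fibration, and then to organise them into a single cyclic relation. First I would use the Mori dream space property of $T/B$ (together with Remark~\ref{rem: Mori dream} and the finiteness of MMP outputs) to conclude that there are only finitely many rank~$1$ and rank~$2$ fibrations dominated by $T/B$, up to $T$-equivalence; call the rank~$1$ ones $X_1/B_1,\dots,X_s/B_s$ and the rank~$2$ ones $Y_1/B_1',\dots,Y_t/B_t'$. Each rank~$2$ fibration $Y_j/B_j'$ factorises through exactly two rank~$1$ fibrations (by the 2-ray game, \cite[Lemma 3.7]{BLZ}), and thus determines a Sarkisov link $\chi_j$ between two of the $X_i$. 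This gives the finite set of links.

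The combinatorial heart is the incidence structure between these two finite lists. By \cite[Lemma 4.2]{BLZ}, cited just before the statement, for each rank~$1$ fibration $X_i/B_i'$ dominated by $T/B$ there are, up to $T$-equivalence, exactly two rank~$2$ fibrations dominated by $T/B$ that factorise through $X_i/B_i'$. I would phrase this as: the bipartite incidence graph $G$ whose vertices are the $X_i$ on one side and the $Y_j$ on the other, with an edge whenever $Y_j/B_j'$ factorises through $X_i/B_i'$, has the property that every $X_i$-vertex has degree exactly $2$. Meanwhile every $Y_j$-vertex has degree exactly $2$ by the 2-ray game. Hence $G$ is a finite graph in which every vertex has degree $2$, so $G$ is a disjoint union of cycles. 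Now I must argue that $G$ is in fact \emph{connected}, i.e.\ a single cycle. Connectivity should follow because any two rank~$1$ fibrations dominated by $T/B$ are related by a chain of the corresponding Sarkisov links — this is essentially the statement that the 2-ray game on $T$, run in all directions, links all the MMP end products of $T$ over $B$ together; I would cite or reprove this from the Mori dream space structure (all the relevant divisorial rays and small contractions of $T$ over $B$ form a connected chamber decomposition of the relevant cone, e.g.\ the movable cone $\Mov(T/B)$, and walking between adjacent chambers realises the edges of $G$). Once $G$ is a single cycle $X_{i_1} - Y_{j_1} - X_{i_2} - Y_{j_2} - \cdots - X_{i_t} - Y_{j_t} - X_{i_1}$, reading off the Sarkisov links around the cycle, with appropriate orientations and inverses, yields a composition $\chi_t \circ \cdots \circ \chi_1$ which starts and ends at $X_{i_1}$ and which, by construction, is a pseudo-isomorphism of $X_{i_1}$ compatible with an automorphism of $B_{i_1}$; since it is built entirely from the 2-ray game on a single $T$, this composition is the identity (or can be taken to be, after absorbing the automorphism).

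The step I expect to be the main obstacle is precisely establishing that the incidence graph $G$ is connected, i.e.\ that one does not merely get a relation among \emph{some} of the links but a single relation involving all of them, and — more delicately — verifying that going around the cycle genuinely closes up to the identity rather than to a nontrivial pseudo-automorphism. This requires being careful about the $T$-equivalence bookkeeping: each $X_i$ is only well-defined up to $T$-equivalence, each edge only up to the implicit birational maps, and one must check these identifications are consistent all the way around the loop. Concretely I would track everything inside a fixed common resolution dominating $T$, express each rank~$1$ and rank~$2$ fibration as an MMP output over $B$ recorded by its nef/movable chamber, and note that the chamber walk returns to its starting chamber, which forces the total birational self-map of $X_{i_1}$ to be an isomorphism that is the identity on the generic fibre and on $B$ — and hence, after the allowed adjustments, the identity. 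The finiteness statements themselves ($s, t < \infty$) are comparatively routine given the Mori dream space hypothesis and Remark~\ref{rem: Mori dream}.
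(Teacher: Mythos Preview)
Your proposal is correct and follows precisely the approach the paper sketches in the paragraph preceding the statement: finiteness of rank~$1$ and rank~$2$ fibrations from the Mori dream space property (Remark~\ref{rem: Mori dream}), then the $2$-regularity of the incidence graph via \cite[Lemma~3.7]{BLZ} and \cite[Lemma~4.2]{BLZ}, yielding the cyclic relation. The paper itself gives no further argument, deferring to \cite[Proposition~4.3]{BLZ}, so your expansion of the connectedness and closing-up-to-identity steps is exactly the content one has to supply to turn the sketch into a proof.
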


\begin{mydef}\label{def:elementary relation}
	In the situation of Proposition~\ref{pro:from T3}, we say that 
	\[
	\chi_t \circ \dots \circ \chi_1 = \id
	\]
	is an \emph{elementary relation} between Sarkisov links, coming from the rank $3$ fibration $T/B$.
	Observe that the elementary relation is uniquely defined by $T/B$, up to taking the inverse, cyclic permutations and insertion of isomorphisms. 
\end{mydef}

Now we are ready to state a keynote result, which allows us to study the structure of birational automorphism groups. Its first part is basically the Sarkisov program itself, following  \cite{HMcK}. The second part is inspired by \cite[Theorem 1.3]{Kaloghiros} and proven in \cite{BLZ}. 

Let $X/B$ a Mori fiber space. We denote by $\BirMori(X)$ the {\it groupoid}\footnote{For the formalism of groupoids and of generators and relations in groupoids, we refer to \cite[\S8.2 and 8.3]{Brown}.} of birational maps between Mori fiber spaces birational to $X$. Note that $\Bir(X)$ is a subgroupoid of $\BirMori(X)$. 

\begin{thm}[{\cite[Theorem 4.29]{BLZ}}]\label{thm: sarkisov} 
	Let $X/B$ be a terminal Mori fiber space. 
	\begin{enumerate}
		\item\label{sarkisov1} The groupoid $\BirMori(X)$ is generated by Sarkisov links and automorphisms.
		\item\label{sarkisov2} Any relation between Sarkisov links in $\BirMori(X)$ is generated by elementary relations.
	\end{enumerate}
\end{thm}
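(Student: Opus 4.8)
The plan is to treat the two parts separately, both by means of the now-standard machinery of the Sarkisov program, recast here in the language of rank~$r$ fibrations.

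For part~\ref{sarkisov1} the strategy is the classical Sarkisov algorithm, following \cite{HMcK} (and Corti). Let $\phi\colon X_1\dashrightarrow X_2$ be a birational map between terminal Mori fiber spaces $X_1/B_1$ and $X_2/B_2$ birational to $X$. Fix a sufficiently divisible very ample linear system on $X_2$ and let $\mathcal{H}_1$ be its strict transform on $X_1$; write $\mathcal{H}_1\sim_\QQ -\mu K_{X_1}+(\text{pullback of an ample divisor from }B_1)$ for the Sarkisov invariant $\mu=\mu(X_1,\mathcal{H}_1)$, and form the Sarkisov degree as the triple $(\mu,c,e)$, where $c$ is the canonical threshold of $\mathcal{H}_1$ on $X_1$ and $e$ the number of crepant valuations of log discrepancy zero for $(X_1,c\mathcal{H}_1)$. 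The Noether--Fano--Iskovskikh criterion says that if $(X_1,c\mathcal{H}_1)$ is canonical and $\mu$ has the expected value, then $\phi$ is already an isomorphism of Mori fiber spaces; otherwise one extracts a suitable valuation, obtaining a rank~$2$ fibration dominating $X_1/B_1$, and the associated $2$-ray game (available because everything in sight is a Mori dream space) produces a Sarkisov link $\chi_1\colon X_1\dashrightarrow X_1'$ that strictly decreases $(\mu,c,e)$ in the lexicographic order. The decisive point is termination: it follows from the finiteness of marked Mori fiber space structures on a Mori dream space (Remark~\ref{rem: Mori dream}) together with the boundedness built into the definition of a rank~$r$ fibration, so that after finitely many links $\phi$ is a composition of Sarkisov links and an isomorphism, the isomorphisms being absorbed into the automorphisms allowed in the groupoid.

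For part~\ref{sarkisov2} the strategy follows Kaloghiros \cite{Kaloghiros} and \cite{BLZ}: one presents the groupoid generated by Sarkisov links by taking the rank~$1$ fibrations birational to $X$ as objects, the Sarkisov links (equivalently, the rank~$2$ fibrations, by \S\ref{Sec:SarkisovLinks}) as generators, and the elementary relations --- the cyclic words of links dominated by a common rank~$3$ fibration (Proposition~\ref{pro:from T3}) --- as relators. The claim is exactly that this presentation is complete, i.e.\ that the associated $2$-complex is simply connected. This is proved by a filling argument: given a loop of links $\chi_t\circ\cdots\circ\chi_1=\id$, one produces a rank~$r$ fibration $T/B$ (for some possibly large $r$) dominating all the rank~$1$ and rank~$2$ fibrations occurring in the loop, and then descends on $r$. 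The descent uses the two structural facts recorded before Proposition~\ref{pro:from T3}: a Mori dream space dominates only finitely many lower-rank fibrations, and a rank~$r$ fibration dominating a fixed rank~$1$ fibration $X/B'$ dominates, up to $T$-equivalence, exactly two rank~$2$ fibrations through it. Together these let one replace any portion of the loop that factors through a high-rank fibration by portions factoring through rank~$3$ fibrations --- that is, by a product of elementary relations --- and one concludes by induction on $r$ and on the length $t$.

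The main obstacle in both parts is a finiteness/termination statement. In part~\ref{sarkisov1} one must know that the Sarkisov algorithm actually stops, which rests on BCHM-type finiteness (\cite{BCHM}) and the Mori dream space hypothesis; in part~\ref{sarkisov2} one must control the a priori unbounded rank of the auxiliary fibration $T/B$ dominating a given loop and ensure that the inductive descent to rank~$3$ terminates. Once these finiteness inputs are granted, the remaining steps --- the $2$-ray game bookkeeping for links and the combinatorics of rank~$\le 3$ fibrations --- are intricate but formal.
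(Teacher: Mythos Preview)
The paper does not contain its own proof of this theorem: it is quoted verbatim as \cite[Theorem~4.29]{BLZ}, with part~\ref{sarkisov1} attributed to \cite{HMcK} and part~\ref{sarkisov2} to \cite{BLZ}. There is therefore nothing in the present paper to compare your attempt against; your task here was really just to recognise that this is a cited result used as a black box.

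That said, a brief comment on your sketch. For part~\ref{sarkisov1} you describe Corti's original algorithm with the Sarkisov degree triple $(\mu,c,e)$, but this is \emph{not} what \cite{HMcK} does: Hacon--McKernan bypass the Sarkisov degree entirely and instead use the BCHM finiteness of ample models, tracing a path along the boundary of a polytope of weak log canonical models. Your termination paragraph conflates the two approaches. For part~\ref{sarkisov2} your outline is closer to the actual argument in \cite{BLZ}, which indeed builds a $2$-complex whose vertices, edges and $2$-cells correspond to rank~$1$, $2$ and $3$ fibrations and proves it is simply connected; however, the proof there again goes through the polytope geometry inherited from \cite{BCHM} rather than through an ad hoc ``descent on $r$'' as you suggest. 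Your sketch captures the spirit but not the mechanism of the cited proofs.
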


\section{Elementary relations involving Bertini links}

Let $\chi_t\circ\cdots\circ\chi_1=\id$ 
be an elementary relation between Sarkisov links, coming from a rank~$3$ fibration $T/B$. Assume that one of the $\chi_i$ is a Sarkisov link $\chi_i\colon X_i\dasharrow X_{i+1}$ of type $\II$ over a curve $B_i$. Depending on whether $B$ is a point or a curve, there are only two possible ways in which $T/B$ can factorize through $\chi_i$, see Figure \ref{fig: rank 3 possibilities} where the dotted arrows and vertical arrows are pseudo-isomorphisms and divisorial contractions respectively, obtained by running an MMP, so $Y_i,\ Y_{i+1},\ T_i,\ T_{i+1}$ are then $\mathbb{Q}$-factorial terminal varieties birational to $X_i$ and $X_{i+1}$. 

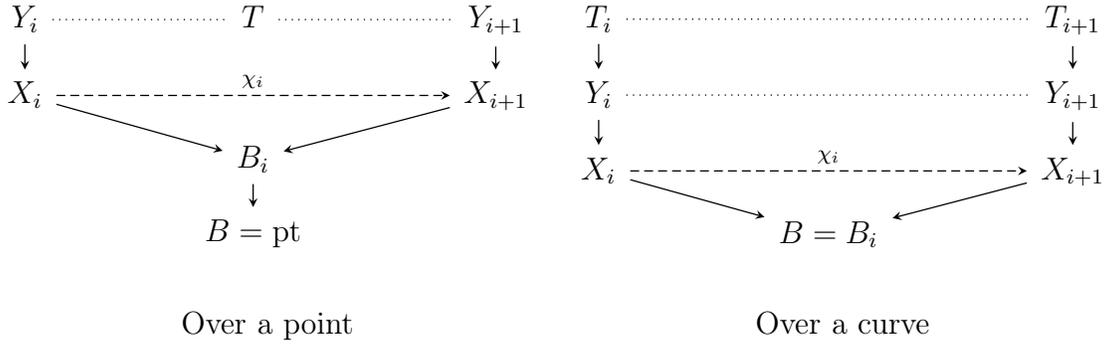
\begin{figure}[H]
	\[
	{
		\def\arraystretch{2.2}
		\begin{array}{cc}
		
		\begin{tikzcd}[link]
		Y_i\ar[dd]\ar[r,dotted,-]& T \ar[r,dotted,-] & Y_{i+1}\ar[dd]  \\ \\
		X_i \ar[rr,"\chi_i",dashed] \ar[rd]&& X_{i+1} \ar[ld]\\
		&B_i\ar[dd]\\ \\
		&B=\pt
		\end{tikzcd}
		
		&

		\begin{tikzcd}[link]
		T_i\ar[dd]\ar[rr,dotted,-] && T_{i+1}\ar[dd]  \\ \\
		Y_i\ar[dd]\ar[rr,dotted,-] && Y_{i+1}\ar[dd]  \\ \\
		X_i \ar[rr,"\chi_i",dashed] \ar[rd]&& X_{i+1} \ar[ld]\\
		&B=B_i&
		\end{tikzcd}
		\\
		\text{Over a point} & \text{Over a curve} 
		
		\end{array}
	}
	\]
	\caption{Two types of relations}
	\label{fig: rank 3 possibilities}
\end{figure} 

We call them {\it relation over a point} and {\it relation over a curve} respectively. The goal of the next two paragraphs is to show that Bertini links of large genus do not occur in non-trivial relations over a point, and occur only in very simple relations over a curve.

\subsection{Relations over a point}

In this paragraph we prove that Bertini links of large genus do not occur in non-trivial elementary relations over a point (Proposition~\ref{prop: no relations for links of type 1}).

The following statement is a consequence of the Borisov-Alexeev-Borisov conjecture, established in \cite{KMMT,Kawamata} for dimension $\leqslant 3$ and in \cite{BirkarS} for any dimension. 

\begin{prop}[{\cite[Proposition 5.1]{BLZ}}]\label{prop: BAB}
	Let $n$ be an integer, and let $\mathcal{F}_n$ be the set of weak Fano terminal varieties of dimension~$n$. There are integers $d,l,m\ge 1$, depending only on $n$, such that for each $X \in \mathcal{F}_n$ the following hold:
	\begin{enumerate}
		\item
		$h^0(-mK_X)\le l$;
		\item
		The linear system $\lvert -mK_X \rvert$ is base-point free;
		\item
		The morphism 
		\[
		\phi_{|-mK_X|}\colon X\to \PP^{h^0(-mK_X)-1}
		\] 
		is birational onto its image and contracts only curves $C\subseteq X$ with $C\cdot K_X=0$;
		\item
		$\deg \phi_{|-mK_X|}(X) \le d$.
	\end{enumerate} 
\end{prop}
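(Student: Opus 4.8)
The plan is to derive all four assertions from a single, deep input: the \emph{boundedness} of the family $\mathcal{F}_n$ of terminal weak Fano $n$-folds. This is precisely where the Borisov--Alexeev--Borisov conjecture enters. Since a terminal singularity is $1$-log canonical, Birkar's theorem (and, in dimension $\le 3$, the earlier results of Kollár--Miyaoka--Mori--Takagi and of Kawamata) provides a projective morphism $\mathcal{X}\to S$ with $S$ of finite type over $\CC$ such that every $X\in\mathcal{F}_n$ is isomorphic to a fibre $\mathcal{X}_s$. (The one point to watch is that ``weak Fano'' means $-K_X$ only nef and big rather than ample; one reduces to the strictly Fano situation by passing to the anticanonical model, or one simply invokes the cited references, which already cover the weak case.) Once this is available, each of (1)--(4) becomes a statement about a fixed finite-type family, and the strategy is the standard ``spreading out'': stratify $S$ into finitely many locally closed pieces over which $\mathcal{X}\to S$ is flat and the relative canonical sheaf commutes with base change, and then bound each quantity of interest by its maximum over the finitely many strata.

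Concretely, I would first note that the Cartier index of $K_{\mathcal{X}_s}$ is bounded on each stratum, so there is a single integer $m_0\ge 1$ with $m_0K_{\mathcal{X}_s}$ Cartier for every $s$. Since each $X\in\mathcal{F}_n$ is terminal with $-K_X$ nef and big, $-K_X$ is semiample by the base-point-free theorem; applying a relative base-point-free theorem together with generic flatness and Noetherian induction on $S$ yields a uniform multiple $m$ of $m_0$ for which $-mK_{\mathcal{X}_s}$ is globally generated on every fibre, which is (2). For this $m$ one has $h^0(X,-mK_X)=\chi(X,-mK_X)$ by Kawamata--Viehweg vanishing, the Euler characteristic is locally constant in a flat family, hence takes finitely many values over $S$, and $l$ is their maximum; this gives (1). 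Likewise $(-mK_X)^n=m^n(-K_X)^n$ is a deformation invariant, so it too takes only finitely many values, and $d$ is their maximum; this will give (4) once (3) identifies $\deg\phi_{|-mK_X|}(X)$ with this intersection number.

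For (3), I would use that $X$ is a Mori dream space, so the anticanonical ring $\bigoplus_{k\ge 0}H^0(X,-kK_X)$ is finitely generated and its $\operatorname{Proj}$ is the anticanonical model $g\colon X\to X^{\mathrm{an}}$, a crepant birational contraction (birational because $-K_X$ is big) with $-K_{X^{\mathrm{an}}}$ ample. Enlarging $m$ once more --- uniformly, thanks to boundedness --- so that $-mK_{X^{\mathrm{an}}}$ is very ample for every member of the family, the morphism $\phi_{|-mK_X|}$ factors as $X\xrightarrow{\,g\,}X^{\mathrm{an}}\hookrightarrow\PP^{h^0(-mK_X)-1}$ with the second arrow a closed embedding, using $H^0(X,-mK_X)=H^0(X^{\mathrm{an}},-mK_{X^{\mathrm{an}}})$ (the contraction $g$ has connected fibres and is crepant). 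Hence $\phi_{|-mK_X|}$ is birational onto its image, and a curve $C\subseteq X$ is contracted by it if and only if it is contracted by $g$, that is, if and only if $-mK_X\cdot C=0$, equivalently $K_X\cdot C=0$; and the degree of the image is $(-mK_X)^n$ since the map has degree $1$ onto it. Combining with the previous paragraph finishes (3) and (4).

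The genuinely hard part is the very first step: the boundedness of $\mathcal{F}_n$, i.e.\ the BAB conjecture, which is a profound result and is merely quoted here. Beyond that, the only points deserving explicit care are the uniform choice of $m$ (relative base-point-free theorem plus Noetherian induction) and the passage from weak Fano to strictly Fano in the boundedness statement; both are routine but should be spelled out. Everything else reduces to semicontinuity and deformation-invariance of Euler characteristics and of intersection numbers over a Noetherian base.
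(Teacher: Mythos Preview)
The paper does not actually prove this proposition: it is quoted verbatim from \cite[Proposition~5.1]{BLZ}, with only the one-line remark that it is ``a consequence of the Borisov--Alexeev--Borisov conjecture, established in \cite{KMMT,Kawamata} for dimension $\le 3$ and in \cite{BirkarS} for any dimension.'' So there is no proof in the present paper to compare against.

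That said, your sketch is the standard and correct way to deduce the statement from BAB boundedness, and it matches in spirit what the sentence above is pointing to. The logical flow --- bounded family $\Rightarrow$ uniform Cartier index $\Rightarrow$ relative base-point-free theorem plus Noetherian induction for a uniform $m$ $\Rightarrow$ Kawamata--Viehweg vanishing and flatness for the bound on $h^0$ $\Rightarrow$ factorisation through the anticanonical model for (3) $\Rightarrow$ deformation invariance of $(-mK)^n$ for (4) --- is sound. The two points you flag as needing care (the uniform choice of $m$ via Noetherian induction, and the reduction from weak Fano to the bounded Fano family via the anticanonical model) are exactly the places where a careless write-up could go wrong, and you have identified them correctly. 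One small addition worth making explicit: to apply boundedness to the anticanonical models $X^{\mathrm{an}}$ you need that these are still $\epsilon$-lc for some fixed $\epsilon>0$ (terminality of $X$ and crepancy of $X\to X^{\mathrm{an}}$ give canonical singularities on $X^{\mathrm{an}}$, which suffices).
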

\begin{cor}\label{cor: bounded genus}There is an integer $N\ge 0$ such that the following holds.
	Let $X$ be a terminal threefold, and $\eta: Y\to X$ be the blow-up of an irreducible and reduced curve $\Gamma\subset X$. Assume that both $X$ and $Y$ are pseudo-isomorphic to weak Fano terminal threefolds having a small anticanonical morphism. Then the geometric genus of $\Gamma$ satisfies $\inv(\Gamma)\le N$.
\end{cor}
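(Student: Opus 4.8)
The plan is to reduce $\inv(\Gamma)$ to a birational invariant of the exceptional divisor of $\eta$, and then bound that invariant via Proposition~\ref{prop: BAB} (i.e.\ via the Borisov--Alexeev--Borisov conjecture).

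First I would reinterpret the genus. Let $E\subset Y$ be the exceptional divisor of $\eta\colon Y\to X$ and let $\Gamma^\nu\to\Gamma$ be the normalisation, so that $\inv(\Gamma)=g(\Gamma^\nu)$ by definition of the geometric genus. Over a dense open subset of $\Gamma$ the morphism $\eta$ is the blow-up of a smooth curve in a smooth threefold, so $E\to\Gamma$ is there a $\PP^1$-bundle; hence $E$ is birationally ruled over $\Gamma^\nu$, and for any resolution $\widetilde E\to E$ one has $h^1(\widetilde E,\mathcal O)=g(\Gamma^\nu)=\inv(\Gamma)$. I would also record that a general fibre $f$ of $E\to\Gamma$ satisfies $-K_Y\cdot f=1$: over the generic point of $\Gamma$ one has $K_Y=\eta^*K_X+E$ and $E|_E$ is the tautological bundle, so $E\cdot f=-1$.

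Next I would transport $E$ to the weak Fano models. Let $Y'$ be a weak Fano terminal threefold pseudo-isomorphic to $Y$ with small anticanonical morphism. Since a pseudo-isomorphism is an isomorphism in codimension $1$, the prime divisor $E$ corresponds to a prime divisor $E'\subset Y'$ birational to $E$; and since it is moreover an isomorphism away from a closed subset of codimension $\ge 2$, a general fibre $f$ corresponds to a curve $f'\subset E'$ with $-K_{Y'}\cdot f'=-K_Y\cdot f=1$, so $E'$ is covered by the curves $f'$. Now apply Proposition~\ref{prop: BAB} with $n=3$ to get $d,l,m$ depending only on $n$, and consider $\phi:=\phi_{|-mK_{Y'}|}\colon Y'\to\bar Y'\subseteq\PP^{l-1}$: it is birational onto $\bar Y'$, which has degree $\le d$, it is the (small) anticanonical morphism of $Y'$, and it contracts only curves with zero intersection against $K_{Y'}$. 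As $-K_{Y'}\cdot f'=1\ne 0$ the curves $f'$ are not contracted, and since $\phi$ is small it cannot contract the divisor $E'$; hence $S':=\phi(E')\subseteq\PP^{l-1}$ is a surface and $\phi|_{E'}\colon E'\to S'$ is birational. Thus $\Gamma^\nu$, $E$, $E'$, $S'$ are linked by birational maps of surfaces, whence $h^1(\widetilde{S'},\mathcal O)=\inv(\Gamma)$ for a resolution $\widetilde{S'}\to S'$.

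Finally I would invoke boundedness: by Proposition~\ref{prop: BAB} the varieties $\bar Y'$ form a bounded family, and I would argue that the surfaces $S'\subseteq\bar Y'$ occurring here --- images of prime divisors swept out by rational curves of anticanonical degree $1$ --- form a bounded family too, so their resolutions have uniformly bounded irregularity $h^1(\widetilde{S'},\mathcal O)\le N$ with $N$ depending only on $n=3$; then $\inv(\Gamma)=h^1(\widetilde{S'},\mathcal O)\le N$. I expect \emph{this last step to be the main obstacle}: the required bound $\deg S'=m^2\,(-K_{Y'})^2\cdot E'\le\mathrm{const}$ does \emph{not} follow from $\deg\bar Y'\le d$ alone, since a threefold of bounded degree can contain surfaces of arbitrarily large degree; one genuinely needs that $E'$ is covered by anticanonical-degree-$1$ curves, which turns the claim into a boundedness statement for surfaces ``swept by lines'' on the bounded family of weak Fano threefolds --- and it is here that the full force of Proposition~\ref{prop: BAB} is used. (Alternatively one could try to exploit the second hypothesis, that $X$ itself is pseudo-isomorphic to a weak Fano with small anticanonical morphism, to first bound $-K_X\cdot\Gamma$ and then control $\Gamma$ through $\phi_{|-mK_{X'}|}$; but this forces one to treat separately the case in which $\Gamma$ is contracted by the anticanonical morphism of $X'$, which leads back to the same kind of boundedness input.)
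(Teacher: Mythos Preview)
Your approach via the irregularity of the exceptional divisor $E$ is genuinely different from the paper's, and the gap you flag in the final step is real and not easily closed: a surface in a threefold of bounded degree can have arbitrarily large irregularity (think of $C\times\PP^1\hookrightarrow\PP^N$ embedded so that the fibres are lines, with $g(C)$ arbitrary), and the extra information that $S'$ is swept by curves of anticanonical degree~$1$ (hence of degree $m$ in $\PP^{l-1}$) does not by itself bound the genus of the base of the ruling. So as written the argument does not conclude.

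The paper avoids this by working with the \emph{curve} $\Gamma$ directly, on the $X$-side, and by using the two hypotheses together. One applies Proposition~\ref{prop: BAB} to both weak Fano models $\widehat X$ and $\widehat Y$: the small anticanonical map $\psi\colon X\ps\widehat X\to\PP^a$ sends $\Gamma$ birationally to a curve $\widetilde\Gamma\subset\widetilde X:=\psi(X)$ (assuming $\inv(\Gamma)>0$, since $\Exc(\psi)$ consists of rational curves). Because $K_Y=\eta^*K_X+E$, the inclusion $H^0(-mK_Y)\hookrightarrow H^0(-mK_X)$ realises $|{-mK_X}|\dasharrow|{-mK_Y}|$ as a linear projection $\PP^a\dasharrow\PP^b$ from a linear subspace $L$, and every section of $-mK_Y$ vanishes along $\Gamma$, so $\widetilde\Gamma\subset\widetilde X\cap L$. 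One then checks (as in \cite[Corollary~5.2]{BLZ}) that $\widetilde X\cap L$ has no surface component, whence B\'ezout gives $\deg\widetilde\Gamma\le\deg\widetilde X\le d$, and Castelnuovo's bound on the genus of a degree-$\le d$ curve in $\PP^{a}$ with $a\le l-1$ yields $\inv(\Gamma)\le N(d,l)$.

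The idea you were missing, then, is to interpret the blow-up as a linear projection between the two anticanonical images and to observe that $\Gamma$ is forced into the centre of that projection; this converts the problem into a degree bound for a curve in a bounded threefold, where B\'ezout and Castelnuovo apply cleanly, instead of an irregularity bound for a surface.
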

\begin{proof}
	We have the following commutative diagram:
	\[\xymatrix@C+1pc{
		\widehat{X}\ar[dd]_{\phi_{|-mK_{\widehat{X}}|}} && X\ar@{.>}[ll]\ar@{.>}[ddl]^{\psi} & Y\ar[l]_{\eta}\ar@{.>}[r] & \widehat{Y}\ar[dd]^{\phi_{|-mK_{\widehat{Y}}|}}\\ 
		&& \Gamma\ar@{}[u]|{\cup}\ar@{|->}[d]\\
		\PP^a & \psi(X)\ar@{}[l]|{\supset}& \widetilde{\Gamma}\ar@{}[l]|{\supset} && \PP^b
	}\]
where $\widehat{X}$ and $\widehat{Y}$ are weak terminal Fano threefolds, and the positive integers $m$ and $a=h^0(-mK_X)-1=h^0(-mK_{\hat{X}})-1,\ b=h^0(-mK_Y)-1=h^0(mK_{\hat{Y}})-1\leqslant l$ are given by Proposition \ref{prop: BAB}. By hypothesis, the morphism $\phi_{|-mK_{\widehat{X}}|}$ is small, so the composition $\psi\colon X\ps \hat{X}\to \PP^a$ is a pseudo-isomorphism with its image $\psi(X)$, the exceptional locus $\Exc(\psi)$ is a closed subset of $X$ of dimension $\le 1$, and each irreducible curve contained in $\Exc(\psi)$ has genus $0$. As we may assume that $\inv(\Gamma)>0$, we obtain $\Gamma\nsubseteq\Exc(\psi)$. Set $\widetilde{\Gamma}=\psi(\Gamma)$. 

Since $\eta_*(mK_Y)=mK_X$, we obtain that $b\leqslant a$ and the induced rational map $\pi: \PP^a\dashrightarrow\PP^b$ is a linear projection from a linear subspace $L\subset\PP^a$. The variety $\widetilde{X}=\psi(X)\subset\PP^a$ is a threefold of degree~$\leqslant d$ by Proposition \ref{prop: BAB}, and is not contained in a hyperplane section. As in \cite[Corollary 5.2]{BLZ}, one can show that there are no irreducible surfaces $S\subseteq\widetilde{X}\cap L$. Then B\'{e}zout Theorem implies that all irreducible components of $\widetilde{X}\cap L$ of dimension $1$ have degree $\leqslant d$. Thus, $\deg(\widetilde{\Gamma})\leqslant d$.

By Castelnuovo's theorem (see e.g. \cite{Harris}), it implies that $\inv(\widetilde{\Gamma})$ is bounded by some constant $N$, depending only on $a$ and $d$, so the same holds for the geometric genus of $\widetilde{\Gamma}$, which is equal to the geometric genus of $\Gamma$.
\end{proof}
\begin{prop}\label{prop: no relations for links of type 1}
	There exists some positive integer $g$ such that no Bertini link $\chi$ with $\inv(\chi)\ge g$ occurs in a non-trivial elementary relation over a point.
\end{prop}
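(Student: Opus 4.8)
The plan is to reduce everything to Corollary~\ref{cor: bounded genus}, with $g := N+1$, where $N$ is the integer produced there. Suppose a Bertini link $\chi\colon X_i\dashrightarrow X_{i+1}$ with $\inv(\chi)\ge g$ occurs in an elementary relation over a point, coming from a rank~$3$ fibration $T/\pt$. I first unpack what ``relation over a point'' means using the left-hand diagram of Figure~\ref{fig: rank 3 possibilities}: the fibration $T$ is a rank~$3$ fibration over a point, hence in particular a Mori dream space with $-K_T$ big, and $T$ factorizes through $X_i$ via a birational contraction $T\dashrightarrow X_i$ that goes through the rank~$2$ fibration $Y_i/B_i$ sitting above $\chi_i$. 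By definition of a Bertini link (Definition~\ref{def: bertini link}), $\eta_i\colon Y_i\to X_i$ is the blow-up of an irreducible reduced curve $\Gamma_i=\Gamma_1$ with $\inv(\Gamma_i)=\inv(\chi)\ge g = N+1$.

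The heart of the argument is to verify the hypotheses of Corollary~\ref{cor: bounded genus} for the blow-up $\eta_i\colon Y_i\to X_i$, namely that both $X_i$ and $Y_i$ are pseudo-isomorphic to weak Fano terminal threefolds admitting a small anticanonical morphism. The point is that $X_i$ and $Y_i$ are both obtained from $T$ by running a $K$-MMP over $B=\pt$: since $T/\pt$ is a rank~$3$ fibration, property~(3) of the definition of rank~$r$ fibration guarantees that every output of a $D$-MMP from $T$ is again $\QQ$-factorial terminal, and Lemma~\ref{lem: prop of fibrations}\ref{propfib2} tells us that the general (here: the only) fiber, i.e. $T$ itself up to pseudo-isomorphism, is a weak Fano terminal variety whose $K$-trivial curves cover a codimension~$\ge 2$ subset --- which is precisely the statement that its anticanonical morphism is small. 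I would then argue that running the $K_T$-MMP over the point, $T$ is pseudo-isomorphic (via flips/flops) to a weak Fano terminal threefold with small anticanonical morphism, and that the intermediate and final outputs $Y_i$, $X_i$ inherit this property: each is $\QQ$-factorial terminal and, being a step of an MMP over a point starting from a Mori dream space with big $-K$, is itself pseudo-isomorphic to a weak Fano with small anticanonical map. (This is the same bootstrapping used in \cite{BLZ}; I expect one appeals directly to Lemma~\ref{lem: prop of fibrations} and Proposition~\ref{prop: BAB} here.)

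Once that verification is in place, Corollary~\ref{cor: bounded genus} applied to $\eta_i\colon Y_i\to X_i$ and the curve $\Gamma_i$ forces $\inv(\Gamma_i)\le N < g$, contradicting $\inv(\chi)=\inv(\Gamma_i)\ge g$. Hence no Bertini link of genus $\ge g$ can appear in a relation over a point; and if such a link appeared in a \emph{trivial} relation there is nothing to prove, while the statement only concerns non-trivial ones, so we are done. The main obstacle I anticipate is the bookkeeping in the previous paragraph: one must be careful that ``weak Fano with small anticanonical morphism'' genuinely propagates through the divisorial contraction $Y_i\to X_i$ and through the pseudo-isomorphisms, i.e. that neither $X_i$ nor $Y_i$ acquires a divisorial part in its anticanonical model. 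This is exactly where the rank~$3$ hypothesis (via property~(3) of rank~$r$ fibrations and Lemma~\ref{lem: prop of fibrations}\ref{propfib2}) does the work, and the rest is essentially a citation of Corollary~\ref{cor: bounded genus}.
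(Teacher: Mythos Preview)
Your proposal is correct and follows essentially the same route as the paper. The paper's proof is just more direct: rather than starting from $T$ and arguing that the weak Fano property ``propagates'' through the MMP, it simply observes (via Lemma~\ref{lem: prop of fibrations}\ref{propfib1}) that $Y_i/\pt$ and $X_i/\pt$ are themselves rank~$3$ and rank~$2$ fibrations, and then applies Lemma~\ref{lem: prop of fibrations}\ref{propfib2} to \emph{each} of them separately to get the weak Fano terminal model with small anticanonical morphism --- so the ``bookkeeping'' you worry about is already packaged in that lemma, and no extra propagation argument is needed before invoking Corollary~\ref{cor: bounded genus} with $g=N+1$.
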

\begin{proof}
	From Lemma~\ref{lem: prop of fibrations} we know that $Y_1/\pt$ and $X_1/\pt$ are rank $3$ and rank $2$ fibrations respectively. Moreover, they are both pseudo-isomorphic to weak Fano terminal varieties. It remains to apply Corollary~\ref{cor: bounded genus} and take $g=N+1$.
\end{proof}

\subsection{Relations over a curve}

The main result of this paragraph is Proposition \ref{prop: relation form} which shows that if a Bertini link occurs in some elementary relation over a curve, then this relation has a very simple form. Before proving this, we need some lemmas.

\begin{lem}\label{lem: base of relation}
Let us take a Bertini link 
\[\begin{tikzcd}[ampersand replacement=\&,column sep=.8cm,row sep=0.16cm]
		Y_1\ar[dd,"\eta_1",swap]  \ar[rr,dotted,-] \&\& Y_2\ar[dd,"\eta_2"] \\ \\
		X_1 \ar[rr,"\chi",dashed] \ar[dr,"\dP_d",swap] \&  \& X_2 \ar[dl,"\dP_d"] \\
		\& B\&
		\end{tikzcd}\]
		dominated by a rank~$3$ fibration $T/B$, where $B$ is a curve, and let  $\widehat{X}/\widehat{B}$ be a rank $1$ fibration $($or equivalently a Mori fiber space$)$ such that $T/B$ factorizes through $\widehat{X}/\widehat{B}$ $($see the picture below$)$. Then the following hold:
		
		\begin{enumerate}
		\item\label{lem: base of relation1}
		The associated morphism $ \widehat{B}\to B$ is an isomorphism;
		\item\label{lem: base of relation2}The birational contraction $T\dasharrow Y_1$ contracts a divisor onto a point or a curve contained in one fiber of $Y_1/B$, and the generic fibers of $T/B$ and $Y_1/B$ are isomorphic del Pezzo surfaces of degree $1$ and Picard rank $2$;
		\item\label{lem: base of relation3}
		Exactly one of the two birational maps $X_1\dasharrow \widehat{X}$ and $X_2\dasharrow \widehat{X}$ induces an isomorphism between the generic fibers of $X_i/B_i\simeq X_i/B$ and of $\widehat{X}/B$.
		\end{enumerate}\[\begin{tikzcd}[ampersand replacement=\&,column sep=.8cm,row sep=0.16cm]
		\&  T\ar[rd,dashed]\ar[ld,dashed]\ar[drrr,dashed]\\
		Y_1\ar[dd,"\eta_1",swap]  \ar[rr,dotted,-] \&\& Y_2\ar[dd,"\eta_2"]\&\& \widehat{X}\ar[dd] \\ \\
		X_1 \ar[rr,"\chi",dashed] \ar[dr] \&  \& X_2 \ar[dl] \&\& \widehat{B}\ar[dlll] \\
		\& B\&
		\end{tikzcd}\]

\end{lem}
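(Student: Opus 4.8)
The plan is to analyze the rank~$3$ fibration $T/B$ using the general structure theory of rank~$r$ fibrations (Lemma~\ref{lem: prop of fibrations}) and the $2$-ray game. First I would establish \ref{lem: base of relation1}: since $T/B$ factorizes through $\widehat X/\widehat B$ with $\widehat B\to B$ a morphism with connected fibers, and since $B$ is a curve, $\widehat B$ is either a point, a curve, or a surface dominating $B$. If $\widehat B$ were a surface, then $\widehat X/\widehat B$ would be a conic bundle; but I claim this is incompatible with $T/B$ already factorizing through the Bertini link, whose intermediate varieties $X_1/B$, $X_2/B$ are del Pezzo fibrations over the \emph{curve} $B$. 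More precisely, the generic fiber $T_{\eta}$ of $T/B$ (over the generic point $\eta$ of $B$) is a Mori dream space of Picard rank~$3$ over the function field $\CC(B)$, birational to the generic fibre of $X_1/B$, which is a del Pezzo surface of degree~$d$; running MMP on $T_\eta$ over $\CC(B)$ one cannot arrive at something of relative dimension~$0$ over a positive-dimensional base, because $T_\eta$ itself is a surface over $\CC(B)$. Hence $\widehat B$ is a curve (the case $\widehat B=\pt$ is excluded because $\widehat X$ would be a Fano threefold and the generic fibre of $T/B$ would have to be recovered inside it, contradicting that $T_\eta$ is positive-dimensional over $\CC(B)$ — equivalently, a rank~$1$ fibration dominated by $T/B$ with $0$-dimensional base cannot exist once $T/B$ has positive-dimensional base and factors through del Pezzo fibrations over $B$). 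Once $\widehat B$ is a curve, the morphism $\widehat B\to B$ is finite and birational between normal curves, hence an isomorphism.

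For \ref{lem: base of relation2}, recall that in a Bertini link $\eta_1\colon Y_1\to X_1$ is the blow-up of a curve $\Gamma_1$ surjecting onto $B$, with generic fibre of $Y_1/B$ a del Pezzo surface of degree~$1$. Since $X_1/B$ has relative Picard rank~$1$ and $Y_1/B$ has relative Picard rank~$2$ (a rank~$2$ fibration, by Lemma~\ref{lem: prop of fibrations}\ref{propfib1}), while $T/B$ has relative Picard rank~$3$, the birational contraction $T\dashrightarrow Y_1$ must drop the relative Picard rank by exactly~$1$, so after running the appropriate MMP it is a single divisorial contraction (composed with flips/flops, which are pseudo-isomorphisms and do not change relative Picard rank). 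I would argue that the contracted divisor cannot dominate $B$: if it did, its image in $Y_1$ would be a divisor or a curve surjecting onto $B$, and contracting it would change the generic fibre of $Y_1/B$, forcing the generic fibre of $T/B$ to have Picard rank~$\ge 3$ over $\CC(B)$; but the generic fibre of $T/B$ is a del Pezzo surface over $\CC(B)$ of Picard rank~$3$ which is weak Fano terminal (Lemma~\ref{lem: prop of fibrations}\ref{propfib2}), and the contraction $T_\eta\dashrightarrow (Y_1)_\eta$ over $\CC(B)$ must then be a pseudo-isomorphism, i.e. an isomorphism of surfaces — wait, that forces Picard rank~$3$ on $(Y_1)_\eta$, a contradiction with relative Picard rank~$2$. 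Therefore the contracted divisor lies over a single closed point of $B$, hence is contracted onto a point or a curve inside one fibre of $Y_1/B$; and consequently $T\dashrightarrow Y_1$ is a pseudo-isomorphism on generic fibres, so $(T)_\eta\approx (Y_1)_\eta$ is a del Pezzo surface of degree~$1$ and Picard rank~$2$ over $\CC(B)$ (pseudo-isomorphic smooth projective surfaces over a field are isomorphic).

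For \ref{lem: base of relation3}, I would invoke \cite[Lemma~4.2]{BLZ}: up to $T$-equivalence there are exactly two rank~$2$ fibrations factorizing through $\widehat X/\widehat B\simeq \widehat X/B$ and dominated by $T/B$. One of these two is $Y_1/B$ (equivalently $X_1/B$, via the Bertini link diagram), and the "other side" of the $2$-ray game from $\widehat X/B$ produces the second one; tracking which rank~$1$ fibration sits on which side, exactly one of $X_1,X_2$ is pseudo-isomorphic over $B$ to $\widehat X$ in a way inducing an isomorphism of generic fibres (equivalently, of relative Picard rank~$1$ fibrations over the generic point), while the other is separated from $\widehat X$ by the Sarkisov link $\chi$, which is a nontrivial birational involution of the generic fibre (the Bertini involution, by Remark~\ref{rem:BertinilinkBertiniInvolution}) and hence not an isomorphism. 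A mutual-exclusivity check — both cannot induce isomorphisms simultaneously, else $\chi$ would be an isomorphism on generic fibres — and an existence check — at least one must, since $T/B$ factors through $\widehat X/B$ and through the Bertini link, and the $2$-ray game from $\widehat X$ over $B$ reaches both $X_1$ and $X_2$ — together give exactly one.

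**Main obstacle.** The delicate point is item~\ref{lem: base of relation3}: carefully matching up the combinatorics of the $2$-ray game played from $T/B$ (the "Sarkisov square" / Mori chamber decomposition of $\mathrm{Mov}(T/B)$, which has three rays) with the three rank~$1$ fibrations $X_1/B$, $X_2/B$, $\widehat X/B$ it dominates, and checking that $\widehat X$ is genuinely on one specified "side" of the Bertini link rather than being a third object unrelated to the link's two ends. This requires being precise about the statement of \cite[Lemma~4.2]{BLZ} and about how a rank~$3$ fibration's chamber structure restricts to that of each of its rank~$2$ sub-fibrations, which is where most of the bookkeeping lives; the del Pezzo degree~$1$ and Picard rank~$2$ statements in \ref{lem: base of relation2} then feed into this by pinning down the generic fibre of $T/B$ well enough that the $2$-ray game on it is literally the $2$-ray game on a (weak) del Pezzo surface of degree~$1$, whose geometry is classical.
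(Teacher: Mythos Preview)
Your overall strategy---pass to the generic fibre over the generic point $\epsilon$ of $B$ and analyse the resulting surface---is the right one, but the execution has a genuine gap in part~\ref{lem: base of relation2}, and this gap propagates into part~\ref{lem: base of relation1}. The sentence ``the contraction $T_\eta\dashrightarrow (Y_1)_\eta$ over $\CC(B)$ must then be a pseudo-isomorphism'' is a non-sequitur: you have just supposed that the contracted divisor dominates $B$, so on generic fibres it \emph{is} a curve being contracted, and the map is manifestly not a pseudo-isomorphism. The actual contradiction is numerical and uses the defining feature of a Bertini link that you never invoke: $(Y_1)_\epsilon$ is a del Pezzo surface of \emph{degree~$1$}. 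If $T_\epsilon\to(Y_1)_\epsilon$ were a nontrivial birational morphism of smooth surfaces, then $K_{T_\epsilon}^2\le 0$, contradicting that the general fibre of $T/B$ is (pseudo-isomorphic to, hence isomorphic to) a weak Fano surface by Lemma~\ref{lem: prop of fibrations}\ref{propfib2}, for which $-K$ is big and so $K^2>0$. This degree argument is the key step, and your Picard-rank bookkeeping does not substitute for it.

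The paper proves~\ref{lem: base of relation2} first and then uses it for~\ref{lem: base of relation1} and~\ref{lem: base of relation3}, which is cleaner than your order and repairs your other gaps. Once one knows $T_\epsilon\cong(Y_1)_\epsilon$ is a del Pezzo surface of degree~$1$ and Picard rank~$2$, both of its extremal rays are divisorial contractions onto $(X_1)_\epsilon$ and $(X_2)_\epsilon$; in particular there is no morphism from $T_\epsilon$ to a curve, so the generic fibre $\widehat B_\epsilon$ of $\widehat B/B$ must be a point and $\widehat B\to B$ is an isomorphism---this is the correct exclusion of $\dim\widehat B=2$, which your argument did not really achieve (a general surface over $\CC(B)$ can perfectly well fibre over a curve; the point is that \emph{this} one cannot). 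And since $\widehat X_\epsilon$ has Picard rank~$1$, the morphism $T_\epsilon\to\widehat X_\epsilon$ must coincide with exactly one of the two extremal contractions, giving~\ref{lem: base of relation3} directly: no appeal to \cite[Lemma~4.2]{BLZ} or Mori-chamber combinatorics is needed. What you flagged as the ``main obstacle'' is in fact an immediate consequence of~\ref{lem: base of relation2}.
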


\begin{proof}
Let $\epsilon$ be the generic point of $B$ and let  $(X_1)_\epsilon$, $(X_2)_\epsilon$, $\widehat{X}_\epsilon$, $(Y_1)_\epsilon$, $(Y_2)_\epsilon$ and $T_\epsilon$ be the generic fibers of  $X_1/B$, $X_2/B$, $\widehat{X}/B$, $Y_1/B$, $Y_2/B$ and $T/B$ respectively. As $\chi$ is a Bertini link, the surfaces $(Y_1)_\epsilon$ and $(Y_2)_\epsilon$ are isomorphic del Pezzo surfaces of degree $1$ and Picard rank $2$, and the contractions of the two extremal rays give two morphisms $(Y_1)_\epsilon\to (X_1)_\epsilon$ and $(Y_2)_\epsilon\to (X_2)_\epsilon$ induced by $\eta_1$ and $\eta_2$.

By Lemma~\ref{lem: prop of fibrations} a general fiber of $T\to B$ is pseudo-isomorphic, and thus isomorphic, to a weak del Pezzo surface. As $(Y_1)_\epsilon$ is a del Pezzo surface of degree $1$,  the birational contraction $T\dasharrow Y_1$ contracts a divisor onto a point or a curve contained in one fiber of $Y_1/B$. In particular,  $T_\epsilon$ is isomorphic to $(Y_1)_\epsilon$. This proves \ref{lem: base of relation2}. As $T/B$ factorises through $\widehat{X}/\widehat{B}$, we have a commutative diagram
	\[
	\begin{tikzcd}[link]
	T \ar[rrr,"\pi"] \ar[dr,"\varphi",dashed,swap] &&& B \\
	& \widehat{X} \ar[r, "\widehat{\pi}",swap] & \widehat{B} \ar["\sigma",ur,swap]
	\end{tikzcd}
	\]
	where $\varphi$ is a birational contraction, and $\sigma$ is a morphism with connected fibers. As $\dim \widehat{B}\le 2$, the generic fiber $\widehat{B}_\epsilon$ of $\hat{B}/B$ is either a curve or a point, and we get morphisms of generic fibers
	\[
	T_\epsilon\overset{f}{\to} \widehat{X}_\epsilon\overset{g}{\to}\widehat{B}_\epsilon.
	\]
	The contractions of the two extremal rays of $T_\epsilon$ give $(X_1)_\epsilon$ and $(X_2)_\epsilon$, and both are del Pezzo surfaces of Picard rank $1$, so there is no morphism to a curve, and $\widehat{B}_\epsilon$ is a point, which means that $\widehat{B}\to B$ is an isomorphism (as $B$ and $\widehat{B}$ are normal), proving~\ref{lem: base of relation1}. As $\widehat{X}/\widehat{B}$ is a rank $1$ fibration, the Picard rank of $\widehat{X}_\epsilon$ is $1$, so $f$ corresponds to one of the contractions of the two extremal rays of $T_\epsilon$. This gives~\ref{lem: base of relation3}.
\end{proof}

We also need the following easy lemma.

\begin{lem}[{\cite[Lemma 2.22]{BLZ}}]\label{lem: 2rays2Divisors}
	Let $T \to Y$ and $Y\to X$ be two divisorial contractions between $\QQ$-factorial varieties, with respective exceptional divisors $E$ and $F$.
	Assume that there exists a morphism $X \to B$ such that $T/B$ is a Mori dream space.
	Then there exist two others $\QQ$-factorial varieties $T'$ and $Y'$, with a pseudo-isomorphism $T \ps T'$ and birational contractions $T' \to Y' \to X$, with respective exceptional divisors the strict transforms of $F$ and $E$, such that the following diagram commutes:
	\[
	\begin{tikzcd}[link]
	T \ar[rr, dotted, -] \ar[dd,"E",swap] && T' \ar[dd,"F"] \\ \\
	Y \ar[dr,"F",swap] && Y' \ar[dl,"E"] \\
	& X &
	\end{tikzcd}
	\]
\end{lem}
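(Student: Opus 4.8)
This lemma is quoted from \cite[Lemma 2.22]{BLZ}, so strictly speaking the proof is a reference; but let me sketch how I would prove it from scratch, since the statement is a pure MMP/$2$-ray-game argument. The idea is to run a suitable MMP on $T$ over $X$ and read off the two-step factorization from the cone of curves. First I would observe that since $T/B$ is a Mori dream space and $X\to B$ is a morphism, also $T/X$ is a Mori dream space (relative Mori dream spaces are stable under such base morphisms), so we may freely run MMPs on $T$ over $X$ and there are only finitely many outputs. The relative Picard rank $\rho(T/X)$ equals $2$: indeed $T\to Y$ and $Y\to X$ are each divisorial contractions of relative Picard rank $1$, so $\rho(T/X)=\rho(T/Y)+\rho(Y/X)=2$. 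Hence the relative effective cone $\overline{\mathrm{Eff}}(T/X)$ is a two-dimensional cone, with two extremal rays.

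**Key steps.** One of the two extremal rays of $\overline{\mathrm{Eff}}(T/X)$ is spanned by the class of $E$ (the exceptional divisor of $T\to Y$): the composite $T\to Y\to X$ contracts $E$ to something of codimension $\ge 2$ in $X$, so $E$ is $\eta$-exceptional, hence sits on the boundary of the effective cone over $X$; and since $\rho(T/Y)=1$, the contraction $T\to Y$ realizes exactly the extremal contraction of the ray $\mathbb{R}_{\ge 0}[E]$. Let $F'$ denote the strict transform on $T$ of $F$ (the exceptional divisor of $Y\to X$); I claim $F'$ spans the \emph{other} extremal ray. Run a $(-F')$-MMP, or equivalently an MMP over $X$ that contracts toward the $F'$-ray: by the Mori dream space property this terminates, and because $\rho(T/X)=2$ the first divisorial step of this MMP (after possibly a sequence of flips, which here are pseudo-isomorphisms) contracts precisely $F'$, producing a $\mathbb{Q}$-factorial variety $Y'$ with a birational contraction $T'\to Y'$ whose exceptional divisor is the strict transform of $F$, and then $Y'\to X$ is the remaining step, a divisorial contraction with exceptional divisor the strict transform of $E$. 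The pseudo-isomorphism $T\dashrightarrow T'$ is the composition of the intermediate flips/flops, isomorphisms in codimension $1$. Commutativity of the square is automatic because all four composite maps $T\to X$, $T\dashrightarrow T'\to X$ agree as rational maps (they are all the given map $T\to X$).

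**Main obstacle.** The delicate point is ensuring that the $F'$-directed MMP over $X$ \emph{does} produce a divisorial contraction of $F'$ itself rather than flipping or contracting something else, and that the intermediate steps are genuinely pseudo-isomorphisms (small modifications) and not divisorial. This is exactly where $\rho(T/X)=2$ is used: the cone $\overline{\mathrm{NE}}(T/X)$ is two-dimensional, one of its rays is resolved by $T\to Y$, and the MMP over $X$ starting transverse to that ray must run through small contractions/flips (which do not drop the Picard rank) until it reaches a divisorial contraction, which then necessarily lands on $Y'$ with $\rho(Y'/X)=1$; a final step contracts the last divisor to $X$. Checking that the exceptional divisor of the divisorial step is the strict transform of $F$ (and not of $E$, which has already been made non-contractible by the time we reach it) requires tracking which divisors remain $\eta$-exceptional along the MMP — this bookkeeping, together with the termination guaranteed by the Mori dream space hypothesis, is the technical heart. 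Since this is all standard and recorded in \cite{BLZ}, in the paper itself I would simply cite \cite[Lemma 2.22]{BLZ} and move on.
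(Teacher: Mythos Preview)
The paper gives no proof of this lemma; it is simply quoted from \cite[Lemma~2.22]{BLZ}, exactly as you note at the outset. Your sketch is the standard reconstruction of that argument: pass to the Mori dream space $T/X$ (this is indeed inherited from $T/B$), observe $\rho(T/X)=2$, and play the $2$-ray game on the side opposite to the given contraction $T\to Y$, obtaining a sequence of flips $T\ps T'$ followed by a divisorial contraction $T'\to Y'$ and then $Y'\to X$. The identification of which divisor gets contracted at each step is exactly the bookkeeping you describe, using that $E$ and the strict transform $F'$ of $F$ are the two exceptional divisors over $X$ and hence span the two extremal rays of $\overline{\mathrm{Eff}}(T/X)$.

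One small correction: writing ``run a $(-F')$-MMP'' is a sign slip. A $(-F')$-MMP contracts $F'$-\emph{positive} rays, whereas the curves swept out by the exceptional divisor of a divisorial contraction meet that divisor negatively; so a $(-F')$-MMP would head back towards the $E$-side. What you want is an $F'$-MMP over $X$ (or simply: contract the extremal ray of $\overline{\mathrm{NE}}(T/X)$ not corresponding to $T\to Y$, flip if small, and repeat). Your parenthetical ``an MMP over $X$ that contracts toward the $F'$-ray'' is the correct intuition; just drop the minus sign.
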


We can now prove the main statement of this paragraph. 

\begin{prop}\label{prop: relation form}
	Let $\chi_1: X_1/B\dashrightarrow X_2/B$ be a Bertini link appearing in some non-trivial elementary relation over a curve. Then this relation has the form
	\[
	\chi_4\circ\chi_3\circ\chi_2\circ \chi_1=\id,
	\]
	where the Sarkisov links $\chi_2\colon X_2\dasharrow X_3$, $\chi_3\colon X_3\dasharrow X_4$, $\chi_4\colon X_4\dasharrow X_1$ are between Mori fibre spaces $X_i/B$, $i=1,2,3,4$, such that $\chi_3$ is a Bertini link and $\chi_2$, $\chi_4$ induce isomorphisms between the generic fibres of $X_i/B$. In particular, the Bertini links $\chi_1$ and ${\chi_3}^{-1}$ are equivalent via ${\chi_4}^{-1}$ and $\chi_2$.
\end{prop}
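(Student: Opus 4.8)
The plan is to analyze the rank~$3$ fibration $T/B$ that produces the elementary relation, using Lemma~\ref{lem: base of relation} to control which Mori fiber spaces are dominated by $T/B$. First I would set up notation: let $\epsilon$ be the generic point of $B$ and pass to generic fibers. By Lemma~\ref{lem: base of relation}\ref{lem: base of relation2}, the generic fiber $T_\epsilon$ is a del Pezzo surface of degree~$1$ and Picard rank~$2$, isomorphic to $(Y_1)_\epsilon$. The key structural input is then that, by \cite[Lemma 4.2]{BLZ} and the discussion preceding Proposition~\ref{pro:from T3}, for each rank~$1$ fibration $\widehat X/\widehat B$ through which $T/B$ factorises there are exactly two rank~$2$ fibrations dominated by $T/B$ factorising through it; combined with Lemma~\ref{lem: base of relation}\ref{lem: base of relation1} every such $\widehat B\to B$ is an isomorphism, so the whole relation lives over $B$ (i.e.\ it is a relation over a curve with all $B_i\simeq B$). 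Thus the links $\chi_i$ in the relation correspond, on generic fibers, to a cyclic sequence of extremal contractions and their inverses on surfaces pseudo-isomorphic to $T_\epsilon$, and I must count how many such surfaces and contractions appear.

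Next I would carry out the counting of rank~$1$ fibrations dominated by $T/B$ via their generic fibers. On the generic fiber, the rank~$2$ del Pezzo surface $T_\epsilon$ of degree~$1$ has exactly two extremal rays, giving two contractions to del Pezzo surfaces of Picard rank~$1$; but the links also involve the $\QQ$-factorial terminal models $Y_i, T_i$ obtained by running MMP over $B$, and a divisorial contraction in a link corresponds on the generic fiber to one of these two ray-contractions being a genuine blow-down of a $(-1)$-curve while the ``fibered'' side is the other ray. Using Lemma~\ref{lem: 2rays2Divisors} to reorganize divisorial contractions over $B$, and using Lemma~\ref{lem: base of relation}\ref{lem: base of relation3} (exactly one of $X_1\dasharrow\widehat X$, $X_2\dasharrow\widehat X$ is a generic-fiber isomorphism), I would argue that, starting from $\chi_1$ which is a Bertini link, the relation must cycle through a bounded number of Mori fiber spaces: $X_1 \to X_2$ by the Bertini link $\chi_1$, then $X_2 \to X_3$ by a link inducing a generic-fiber isomorphism (coming from the flop/blow-up structure on the other side of the rank~$2$ fibration attached to $X_2$), then $X_3 \to X_4$ which must again be a Bertini link (this is forced: the degree-$1$ blow-up structure on $Y_1$ reappears on the ``opposite'' side, and the Bertini involution is an involution), and finally $X_4 \to X_1$ closing the cycle via another generic-fiber isomorphism. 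The precise bookkeeping is that a rank~$3$ fibration $T/B$ whose generic fiber is a del Pezzo surface of degree~$1$ and Picard rank~$3$ (or degree~$2$/rank~$2$ after a divisorial contraction) has exactly four rank~$2$ fibrations dominated by it, producing a relation of length~$4$ of precisely this shape.

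I would then identify the equivalence: the commutative diagram of Lemma~\ref{lem: base of relation} together with the fact that $\chi_2$ and $\chi_4$ induce isomorphisms between the generic fibers of the $X_i/B$ is exactly the data required by Definition~\ref{defi:BertinilinksequiDefi} for $\chi_1$ to be equivalent to ${\chi_3}^{-1}$ via $\psi_1 = {\chi_4}^{-1}$ and $\psi_2 = \chi_2$; one checks $\psi_1$ maps the generic fiber of $X_1/B$ to that of $X_4/B$ isomorphically (since $\chi_4$ does), similarly for $\psi_2$, and the base isomorphism $\psi$ is the identity on $B$. The main obstacle I anticipate is the combinatorial classification in the middle step: pinning down that the relation has length exactly~$4$ and that the two non-Bertini links genuinely induce isomorphisms on generic fibers (rather than, say, being further Bertini links or conic-bundle links). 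This requires a careful enumeration of the rank~$2$ fibrations dominated by a rank~$3$ fibration over a curve with degree-$1$ del Pezzo generic fiber — essentially a generic-fiber version of the surface del Pezzo Sarkisov analysis — and matching it against the possible divisorial-contraction versus fibration alternatives in Figure~\ref{fig:SarkisovTypes}. The terminal/$\QQ$-factorial hypotheses from Lemma~\ref{lem: prop of fibrations} and the Mori dream space property are what make this enumeration finite and well-behaved, and Lemma~\ref{lem: 2rays2Divisors} is the tool that lets me normalize the order in which divisors are contracted so that the four links appear in the stated cyclic order.
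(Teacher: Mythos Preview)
Your proposal uses the right ingredients (Lemmas~\ref{lem: base of relation} and~\ref{lem: 2rays2Divisors}) and the same overall strategy as the paper, but there is a genuine gap in the counting step, and it is signalled by your own contradiction: you first say (correctly, by Lemma~\ref{lem: base of relation}\ref{lem: base of relation2}) that $T_\epsilon$ is a del Pezzo surface of degree~$1$ and Picard rank~$2$, and later that ``the generic fiber is a del Pezzo surface of degree~$1$ and Picard rank~$3$''. The first statement is the right one, and this is precisely why a purely generic-fibre analysis cannot produce the length-four relation: since $T_\epsilon\simeq(Y_1)_\epsilon$, the generic fibre sees only the two horizontal extremal rays and cannot distinguish $T$ from $Y_1$. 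On the generic fibre you only ever get the two contractions to $(X_1)_\epsilon$ and $(X_2)_\epsilon$, so your proposed enumeration of ``exactly four rank~$2$ fibrations'' has no basis at the level of $T_\epsilon$.

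What is missing is the identification of the \emph{vertical} divisors. The contraction $T_1\to Y_1$ sends its exceptional divisor $F_1$ into a single fibre of $Y_1/B$ (this is the content of Lemma~\ref{lem: base of relation}\ref{lem: base of relation2}), and one must also name the fibre $F_2\subset Y_1$ containing that image. The paper's proof then hinges on two combinatorial facts: for each $i$, the morphism $T_i\to X_i$ contracts exactly one of $E_1,E_2$ (this is Lemma~\ref{lem: base of relation}\ref{lem: base of relation3}, which you cite) and exactly one of $F_1,F_2$ (because every fibre of the del Pezzo fibration $X_i\to B$ is irreducible). These two binary choices give the $2\times 2=4$ Mori fibre spaces and force the cycle to close after four links; Lemma~\ref{lem: 2rays2Divisors} is used exactly as you say, to reorder the two contractions at each step and propagate the labelling. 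Without naming $F_1,F_2$ and invoking irreducibility of fibres, you have no mechanism to bound the length of the relation or to show that $\chi_2,\chi_4$ are not themselves Bertini links. Once you add this, the rest of your outline (including the verification of equivalence via Definition~\ref{defi:BertinilinksequiDefi}) goes through as written.
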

\begin{proof}
	Let $T/B$ be a rank~$3$ fibration corresponding to an elementary relation $\chi_n\circ\cdots\circ\chi_1=\id$. Denote by $\{X_i/B_i\}_{i=1}^n$ the finite collection of rank~1 fibrations dominated by $T/B$ and corresponding to Sarkisov liks $\chi_i: X_i\dashrightarrow X_{i+1}$. We are going to show that $n=4$.
	
	First, Lemma~\ref{lem: base of relation}\ref{lem: base of relation1} shows that the morphism $B_i\to B$ is an isomorphism for each $i$. In particular, all $\chi_i$ are Sarkisov links of type $\II$ over $B$. We will show that the links are given as in the diagram below, where the varieties associated with $\chi_i$ are arranged in circles according to their Picard number and where the morphisms are labelled by divisors which those morphisms contract.
\[
\begin{tikzpicture}[scale=1.1,font=\small,outer sep=-1pt]
\node (Y8) at (112.5:1.7cm) {$Y_1'$};
\node (T8) at (112.5:2.6cm) {$T_1'$};
\node (Y1) at (157.5:1.7cm) {$Y_1$};
\node (T1) at (157.5:2.6cm) {$T_1$};
\node (Y2) at (202.5:1.7cm) {$Y_2$};
\node (T2) at (202.5:2.6cm) {$T_2$};
\node (Y3) at (247.5:1.7cm) {$Y_2'$};
\node (T3) at (247.5:2.6cm) {$T_2'$};
\node (Y4) at (292.5:1.7cm) {$Y_3'$};
\node (T4) at (292.5:2.6cm) {$T_3'$};
\node (Y5) at (337.5:1.7cm) {$Y_3$};
\node (T5) at (337.5:2.6cm) {$T_3$};
\node (Y6) at (382.5:1.7cm) {$Y_4$};
\node (T6) at (382.5:2.6cm) {$T_4$};
\node (Y7) at (427.5:1.7cm) {$Y_4'$};
\node (T7) at (427.5:2.6cm) {$T_4'$};
\node (B) at (0:0cm) {$B$};
\foreach \x in {1,...,4}{
\node (X\x) at (\x*90+45:0.85cm) {$X_{\x}$};
\draw[->] (X\x) to (B);
}
\draw[dotted,-] (T1) to [bend right=15] (T2)
		(T2) to [bend right=15] (T3)
		(T3) to [bend right=15] (T4)
		(T4) to [bend right=15] (T5)
		(T5) to [bend right=15] (T6)
		(T6) to [bend right=15] (T7)
		(T7) to [bend right=15] (T8)
		(T8) to [bend right=15] (T1)
		(Y1) to [bend right=15] (Y2)
		(Y3) to [bend right=15] (Y4)
		(Y5) to [bend right=15] (Y6)
		(Y7) to [bend right=15] (Y8);
\draw[dashed,->] (X1) to [bend right=35,swap,"$\chi_1$"] (X2);
\draw[dashed,->] (X2) to [bend right=35,swap,"$\chi_2$"] (X3);
\draw[dashed,->] (X3) to [bend right=35,swap,"$\chi_3$"] (X4);
\draw[dashed,->] (X4) to [bend right=35,swap,"$\chi_4$"] (X1);
\draw[->] (Y1) to [pos=0,"\tiny $E_1$"] (X1);
\draw[->] (Y2) to [swap,pos=0,"\tiny $E_2$"] (X2);
\draw[->] (Y3) to [pos=0,"\tiny $F_1$"] (X2);
\draw[->] (Y4) to [swap,pos=0,"\tiny $F_2$"] (X3);
\draw[->] (Y5) to [pos=0,"\tiny $E_2$"] (X3);
\draw[->] (Y6) to [swap,pos=0,"\tiny $E_1$"] (X4);
\draw[->] (Y7) to [pos=0,"\tiny $F_2$"] (X4);
\draw[->] (Y8) to [swap,pos=0,"\tiny $F_1$"] (X1);
\draw[->] (T1) to [pos=0.2,"\tiny $F_1$"] (Y1);
\draw[->] (T2) to [swap,pos=0.2,"\tiny $F_1$"] (Y2);
\draw[->] (T3) to [pos=0.2,"\tiny $E_2$"] (Y3);
\draw[->] (T4) to [swap,pos=0.2,"\tiny $E_2$"] (Y4);
\draw[->] (T5) to [pos=0.2,"\tiny $F_2$"] (Y5);
\draw[->] (T6) to [swap,pos=0.2,"\tiny $F_2$"] (Y6);
\draw[->] (T7) to [pos=0.2,"\tiny $E_1$"] (Y7);
\draw[->] (T8) to [swap,pos=0.2,"\tiny $E_1$"] (Y8);
\end{tikzpicture}
\]
	 We start with the link $\chi_1$ and try to recover the whole relation. Let $E_1\subset Y_1$ and $E_2\subset Y_2$ be exceptional divisors of the divisorial contractions $Y_1\to X_1$ and $Y_2\to X_2$ respectively. As $\chi_1$ is a Bertini link, both are contracted onto curves of $X_1$ and $X_2$ respectively, which are not contained in one fibre. We again denote by $E_1,\ E_2\subset T$ the strict transforms of these divisors on $T$. We denote by $F_1$ the divisor contracted by the divisorial contraction $T_1\to Y_1$. By Lemma~\ref{lem: base of relation}\ref{lem: base of relation2}, the generic fibers of $T_1/B$ and $Y_1/B$ are isomorphic, so the image of $F_1$ is a point or a curve contained in a fiber of $Y_1/B$. The same then holds for $T_2\to Y_2$, and we obtain the following part of the above diagram.
	 \[\begin{tikzcd}[ampersand replacement=\&,column sep=.8cm,row sep=0.16cm]
		T_1\ar[dd,"F_1",swap]  \ar[rr,dotted,-] \&\& T_2\ar[dd,"F_1"] \\ \\
		Y_1\ar[dd,"E_1",swap]  \ar[rr,dotted,-] \&\& Y_2\ar[dd,"E_2"] \\ \\
		X_1 \ar[rr,"\chi_1",dashed] \ar[dr,swap] \&  \& X_2 \ar[dl] \\
		\& B\&
		\end{tikzcd}\]
		Denote by $F_2\subset Y_1$ the fiber containing the image of $F_1$. As before, we again write $F_2$ for the strict transforms of $F_2$ in the threefolds $X_i,Y_i,T_i$. 
		
		For each integer $i$, the rank $3$ fibration $T_i/B$ factorises through $X_i/B_i=X_i/B$.  We obtain the following key facts:
	\begin{enumerate}
		\item The morphism $T_i\to X_i$ contracts {\it exactly one} divisor among $F_1$ and $F_2$. Indeed, each fiber of our del Pezzo fibration $X_i\to B$ is an {\it irreducible} surface.
		\item The morphism $T_i\to X_i$ contracts {\it exactly one} divisor among $E_1$ and $E_2$. This is because  exactly one of the two birational maps $X_1\dasharrow X_i$ and $X_2\dasharrow X_i$ induces an isomorphism between the generic fibers of $X_1/B$ or $X_2/B$ with $X_i/B$ (Lemma~\ref{lem: base of relation}\ref{lem: base of relation3}).
	\end{enumerate}
	We use these to finish the proof. Let us consider then next link $\chi_2$ that starts from $X_2$. Its resolution is given by 
	 \[\begin{tikzcd}[ampersand replacement=\&,column sep=.8cm,row sep=0.16cm]
		T_2'\ar[dd,swap]  \ar[rr,dotted,-] \&\& T_3'\ar[dd] \\ \\
		Y_2'\ar[dd,swap]  \ar[rr,dotted,-] \&\& Y_3'\ar[dd] \\ \\
		X_2 \ar[rr,"\chi_2",dashed] \ar[dr,swap] \&  \& X_3 \ar[dl] \\
		\& B\&
		\end{tikzcd}\]
	
	As $T_2\approx T_2'$ over $X_2$ and the birational map $\chi_2\circ \chi_1$ should not be an isomorphism, the morphism $Y_2'\to X_2$ must contract $F_1$, and $T_2'\to Y_2'$ must contract $E_2$ (this exists by Lemma~\ref{lem: 2rays2Divisors}). As $T_2\approx T_3'$ and $Y_2'\approx Y_3'$, the morphism $T_3'\to Y_3'$ contracts $E_2$. Then (2) implies that $Y_3'\to X_3$ contracts $F_2$.
	
	Proceeding this game further, we recover all labels and prove $n=4$. The next step consists of studying $\chi_3$. We obtain a commutative diagram

	 \[\begin{tikzcd}[ampersand replacement=\&,column sep=.8cm,row sep=0.16cm]
		T_2'\ar[dd,"E_2",swap]  \ar[rr,dotted,-] \&\& T_3'\ar[dd,"E_2"] \ar[rr,dotted,-] \&\& T_3\ar[dd] \ar[rr,dotted,-] \&\& T_4\ar[dd]\\ \\
		Y_2'\ar[dd,"F_1",swap]  \ar[rr,dotted,-] \&\& Y_3'\ar[ddr,"F_2"]\&\& Y_3 \ar[ddl]\ar[rr,dotted,-] \&\&Y_4\ar[dd]\\\ \\
		X_{2} \ar[rrr,"\chi_{2}",dashed] \ar[drrr,swap] \&\&\& X_3\ar[rrr,"\chi_{3}",dashed]\ar[d]   \&\&\& X_{4}\ar[dlll] \\
		\&\&\&B\&
		\end{tikzcd}\]
	As before, the fact that $T_3\approx T_3'$ implies that $T_3\to X_3$ contracts $E_2$ and $F_2$. This has to be in a different order than $T_3'\to X_3$ (since otherwise $\chi_3\circ \chi_2$ would be the identity), so $T_3\to Y_3$ contracts $F_2$ and $Y_3\to X_3$ contracts $E_2$; these contractions exist again by Lemma~\ref{lem: 2rays2Divisors}. As above, the fact that $T_3\approx T_4$ and $Y_3\approx Y_4$ implies that $T_4\to Y_4$ contracts $F_2$, and then $Y_4\to X_4$ contracts $E_1$ by $(2)$. The same argument applied to the next link $\chi_4$ gives a commutative diagram
	 \[\begin{tikzcd}[ampersand replacement=\&,column sep=.8cm,row sep=0.16cm]
		T_3\ar[dd,"F_2",swap]  \ar[rr,dotted,-] \&\& T_4\ar[dd,"F_2"] \ar[rr,dotted,-] \&\& T_4'\ar[dd,"E_1"] \ar[rr,dotted,-] \&\& T_5'\ar[dd,"E_1"]\\ \\
		Y_3\ar[dd,"E_2",swap]  \ar[rr,dotted,-] \&\& Y_4\ar[ddr,"E_1"]\&\& Y_4' \ar[ddl,"F_2",swap]\ar[rr,dotted,-] \&\&Y_5'\ar[dd,"F_1"]\\\ \\
		X_{3} \ar[rrr,"\chi_{3}",dashed] \ar[drrr,swap] \&\&\& X_4\ar[rrr,"\chi_{4}",dashed]\ar[d]   \&\&\& X_{5}\ar[dlll] \\
		\&\&\&B\&
		\end{tikzcd}\]
		As $T_5'\approx T_1$ and both $T_1\to X_1$ and $T_5'\to X_5$ contract the two divisors $E_1$ and $F_1$, there is an isomorphism $X_5\iso X_1$ compatible with the morphisms $T_1\to X_1$ and $T_5'\to X_5$ (see also Lemma~\ref{lem: 2rays2Divisors}). In particular, we may replace $X_5$ with $X_1$ in the above diagram and then simply write $T_5'=T_1'$ and $Y_5'=Y_1'$. The rational map $T_1\dasharrow X_1$ obtained here is the same as the initial one, so we have covered all possibilities, and obtain $n=4$, together with the desired diagram.

	As $Y_3\to X_3$ contract $E_2$, the link $\chi_3$ is a Bertini link (see Definition~\ref{def: bertini link}).
	As only the divisor $E_2$ (and not $E_1$) is contracted by $T_2'\to X_2$ and $T_3'\to X_3$, the link $\chi_2$ induces an isomorphism between the generic fibers of $X_2\to B$ and $X_3\to B$. Similarly, only $E_1$ (and not $E_2$) is contracted by $T_4'\to X_4$ and $T_1'\to X_1$, so $\chi_4$  induces an isomorphism between the generic fibers of $X_4\to B$ and $X_1\to B$. This means that $\chi_1$ is equivalent to ${\chi_3}^{-1}$ via  $\chi_4^{-1}$ and $\chi_2$ (Definition~\ref{defi:BertinilinksequiDefi}).

\end{proof}

\subsection{Proof of Theorem \ref{thm:BirMori}}\label{sec: existence of hom}

We are now ready to prove Theorem \ref{thm:BirMori}. We use the presentation of the groupoid $\BirMori$ given in Theorem \ref{thm: sarkisov} and the description of elementary relations involving Bertini links, obtained in Propositions \ref{prop: no relations for links of type 1} and \ref{prop: relation form}.

\begin{proof}[Proof of Theorem \ref{thm:BirMori}]
	Let $X$ be a del Pezzo fibration of degree $3$. By Theorem~\ref{thm: sarkisov}\ref{sarkisov1}, the groupoid $\BirMori(X)$ is generated by Sarkisov links and automorphisms of Mori fiber spaces. By Theorem~\ref{thm: sarkisov}\ref{sarkisov2}, the relations are generated by elementary relations. Fix a positive integer $g\in\ZZ_{>0}$ given by Proposition \ref{prop: no relations for links of type 1}. Consider the map
	\[
	\Phi: \BirMori(X)\to \bigast\limits_{\Bertini_X^{\ge g}} \ZZ/2
	\] 
 	which sends each Sarkisov link $\chi$ of Bertini type with $\inv(\chi)\ge g$ to the generator of $\ZZ/2$ indexed by $[\chi]$, and all other Sarkisov links and automorphisms of Mori fiber spaces are sent to zero. To check that $\Phi$ is a well-defined groupoid homomorphism, we need to show that every elementary relation is sent to the neutral element.
 	
	Let $\chi_n\circ\cdots\circ\chi_1=\id$ 
	be a non-trivial elementary relation between the Sarkisov links. We may assume that it involves a Bertini link $\chi_1$ with $\inv(\chi_1)\ge g$, otherwise the relation is sent to the neutral element automatically. By Proposition~\ref{prop: no relations for links of type 1}, our relation is a relation over a curve. Then Proposition~\ref{prop: relation form} shows that $n=4$. Moreover, $\chi_1$ and $\chi_3$ are equivalent links of Bertini type, while $\chi_2$ and $\chi_4$ are not of Bertini type; this latter fact follows from the fact that $\chi_2$ and $\chi_4$ are isomorphisms between the generic fibres $X_i\to B$, by Proposition~\ref{prop: relation form}, and thus cannot be of Bertini type, by Remark~\ref{rem:BertinilinkBertiniInvolution}. Thus, our elementary relation is sent to the neutral element. This proves the existence of the groupoid homomorphism. The restriction on $\Bir(X)$ gives a group-theoretic homomorphism.
\end{proof}

\section{Image of the group homomorphism given by Theorem \ref{thm:BirMori}}\label{sec: image}

In this section, we prove that the image of the group homomorphism given by Theorem \ref{thm:BirMori} is large when $X$ is a cubic del Pezzo fibration. Recall that a Bertini involution on a cubic surface $F\subset \PP^3$ associated with two general points $p,q\in F$ is given by $\pi\iota\pi^{-1}\in \Bir(X)$, where $\pi\colon \widehat{F}\to F$ is the blow-up of the two points $p,q$ and $\iota\in \Aut(\widehat{F})$ is the Bertini involution of the del Pezzo surface $\widehat{F}$ of degree $1$, given by the double covering $\widehat{F}\stackrel{\lvert -2K_{\widehat F}\rvert}{\longrightarrow} V\subset \PP^3$ onto a quadric cone $V\subset \PP^3$. The following result does this in family on a cubic del Pezzo fibration.

\begin{prop}[{\cite[Proposition 5.3]{BCDP}}]\label{prop: genus is unbounded}
	Let $X/B$ be a del Pezzo fibration of degree $3$. For each positive integer $m>0$ there exists a smooth $2$-section $\Gamma\subset X$ with genus $\inv(\Gamma)\ge m$, and a birational involution $\iota_\Gamma$ which acts on a general fibre $F$ of $X/B$ as the Bertini involution of the cubic surface $F$, centered at the two points of $F\cap \Gamma$.
\end{prop}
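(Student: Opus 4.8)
The plan is to construct the 2-section $\Gamma$ and the involution $\iota_\Gamma$ in family, working over the generic point $\epsilon$ of $B$ first and then spreading out. Let $F_\epsilon$ be the generic fibre of $X/B$, a smooth cubic surface over the function field $\CC(B)$, and recall the classical fibrewise picture: given two general points $p,q \in F$, the blow-up $\pi\colon \widehat{F}\to F$ at $p,q$ is a del Pezzo surface of degree $1$, whose anticanonical map $\lvert -K_{\widehat F}\rvert$ has a single base point and whose bianticanonical map realizes $\widehat{F}$ as a double cover of a quadric cone, the deck transformation being the Bertini involution $\iota$; conjugating back gives $\iota_\Gamma = \pi\iota\pi^{-1} \in \Bir(F)$. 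To make this work over $B$, I would choose $\Gamma\subset X$ to be an irreducible smooth curve whose projection $\Gamma\to B$ is finite of degree $2$ (a $2$-section), so that the two points $F\cap\Gamma$ in a general fibre play the role of $\{p,q\}$; the Galois action of the double cover $\Gamma\to B$ interchanges these two points, which is exactly what is needed for the construction to descend. Blowing up $\Gamma$ in $X$ produces a threefold $\widehat{X}\to B$ whose generic fibre is $\widehat{F_\epsilon}$, a degree $1$ del Pezzo surface over $\CC(B)$ (possibly after a small MMP over $B$ to reach a genuine del Pezzo fibration), and the fibrewise Bertini involution patches to a birational involution $\iota_\Gamma$ of $\widehat X$, hence of $X$; this is the map whose existence is asserted. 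Most of these fibrewise facts are standard (see e.g.\ \cite{Isk1996}), and the key structural input — that blowing up a $2$-section of a cubic del Pezzo fibration yields a fibration whose generic fibre is del Pezzo of degree $1$ and whose Bertini involution gives a Sarkisov link of type $\II$ — is precisely the content of the Bertini links discussed in Section~\ref{Sec:SarkisovLinks}.

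The genuinely substantive point, and the main obstacle, is the unboundedness of the genus: I must exhibit, for every $m$, a smooth $2$-section $\Gamma$ with $\inv(\Gamma)\ge m$. The strategy here is to produce $2$-sections with controlled numerical class and then let that class grow. Concretely, one can embed $X$ over an affine piece of $B$ and intersect with auxiliary divisors, or — more cleanly — observe that a general member of a sufficiently positive linear system cutting $X$ in a curve that is a bisection of $\pi$ will have, by adjunction on $X$ (using that $X$ is a threefold with $-K_X$ relatively ample of degree $3$), arithmetic genus growing linearly with the positivity of the chosen divisor; a Bertini-type irreducibility and smoothness argument (hence the name of the links!) guarantees that a general such member is a smooth irreducible curve, so its geometric genus equals its arithmetic genus and tends to infinity. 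The delicate part is keeping the intersection with a general fibre of degree exactly $2$ while increasing positivity in the "horizontal" direction; this can be arranged by fixing the fibre-class component of the divisor and increasing only a component pulled back from a high-degree line bundle on $B$, or from a suitably ample bundle on the ambient projective bundle in which $X/B$ embeds. Once such $\Gamma$ is chosen with $\inv(\Gamma)\ge m$, the first paragraph's construction applies verbatim and produces the desired $\iota_\Gamma$ acting fibrewise as the Bertini involution centered at $F\cap\Gamma$.

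I would therefore organize the write-up in three steps: (i) recall and set up the fibrewise Bertini involution on a smooth cubic surface associated to two general points, emphasizing the role of the double-cover Galois action so that it descends to a $2$-section; (ii) given any smooth $2$-section $\Gamma\subset X$, blow it up, run an MMP over $B$ if necessary to obtain a del Pezzo fibration $\widehat X/B$ of degree $1$, and verify that the fibrewise Bertini involutions glue to a birational involution $\iota_\Gamma$ of $X$ with the stated action on general fibres; (iii) construct, for each $m$, a smooth irreducible $2$-section of genus $\ge m$ by a general-member/adjunction argument as sketched above. Step (iii) is where the real work lies; steps (i) and (ii) are formal given the material already developed in Sections~\ref{Sec:SarkisovLinks}–\ref{sec: existence of hom} and the classical theory of del Pezzo surfaces. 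Since the statement is quoted from \cite[Proposition~5.3]{BCDP}, I would in the final version simply cite that reference for the full details, sketching only the points needed to see that the resulting $\iota_\Gamma$ and $\Gamma$ feed correctly into the genus argument of Section~\ref{sec: image}.
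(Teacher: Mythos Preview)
Your steps (i) and (ii) are fine and mirror the paper's setup, but step (iii) --- the construction of smooth $2$-sections of unbounded genus --- has a genuine gap. You propose to obtain $\Gamma$ as a general complete intersection of two divisors on $X$, ``fixing the fibre-class component of the divisor and increasing only a component pulled back from $B$''. The problem is that $X/B$ is a Mori fibre space with $\rho(X/B)=1$, so modulo vertical classes every divisor on $X$ is a multiple of $-K_X$, whose restriction to a general fibre $F$ (a cubic surface) is the hyperplane class of degree~$3$. Hence for any two divisors $D_1,D_2$ on $X$ one has $D_1\cdot D_2\cdot F\in 3\ZZ$, and a general complete intersection curve on $X$ is a $3k$-section, never a $2$-section. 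Working in the ambient $\PP^3$-bundle does not help for the same reason: the intersection with a fibre $\PP^3$ of $X\cdot(a_1\xi+b_1F)\cdot(a_2\xi+b_2F)$ is $3a_1a_2$.

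The paper's (i.e.\ \cite{BCDP}'s) fix is the missing idea: first choose a section $Z\subset X$ and blow it up, $\eta\colon Y\to X$, so that $Y/B$ is a del Pezzo fibration of degree~$2$. Now $-K_Y$ restricts to degree~$2$ on a general fibre $F_Y$, and for $n\gg 0$ the system $\lvert -K_Y+nF_Y\rvert$ is base-point free; a general complete intersection $\Delta=D_1\cap D_2$ with $D_i\in\lvert -K_Y+nF_Y\rvert$ is then a smooth irreducible curve meeting $F_Y$ in $(-K_{F_Y})^2=2$ points, i.e.\ a $2$-section. Its image $\Gamma=\eta(\Delta)\subset X$ is the desired $2$-section, and adjunction gives $\deg(-K_\Gamma)=4n-K_Y^3$, so $\inv(\Gamma)\to\infty$ with $n$. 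Once you insert this blow-up-a-section trick, your outline becomes the paper's proof.
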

\begin{proof}[Sketch of proof]
	We may assume that $X$ is brought to a standard model. Take a section $Z\subset X$, and let $\eta: Y\to X$ be its blow-up (note that $Z$ exists by \cite[IV, Theorem 6.8]{Kollar_rational}). Then $Y$ is a del Pezzo fibration of degree $2$. Denote by $F_X$ a general fiber of $X\to\PP^1$, and by $F_Y$ a general fiber of $Y\to\PP^1$. For some large integer $n$, consider the divisor
	\[
	D=-K_Y+nF_Y.
	\]
	One can show that $D$ is base-point-free, for $n$ big enough. Now take two general elements $D_1,\ D_2\in |D|$, and consider the curve $\Delta=D_1\cap D_2$. Then $Q$ is a smooth curve contained in the smooth locus of $Y$.
	
	As $D_1,\ D_2$ correspond to the strict transforms of two hyperplanes in $\PP^3$ through the point $Z\cap F_X$, we get that $F_Y\cap\Delta$ consists of 2 points, i.e.~$\Delta$ is a 2-section. Now let $\Gamma=\eta(\Delta)$, which is a $2$-section of $X/B$. Straightforward calculations show that 
	\[
	\deg(-K_\Gamma)=4n-K_Y^3,
	\]
	which proves that the genus of $\Gamma$ is bigger than $m$ if one chooses $n$ large enough. The birational involution $\iota_\Gamma\in \Bir(X)$ is then obtained by applying to a general fibre $F$ of $X/B$ the Bertini involution of the cubic surface $F$, centered at  the two points of $F\cap \Gamma$.
\end{proof}

To deduce Theorem \ref{thm:grouphom} and Theorem \ref{thm:TameproblemBirP3} from Theorem~\ref{thm:BirMori},  we will show that the involutions $\iota_\Gamma$ of Proposition~\ref{prop: genus is unbounded} are sent, via the group homomorphism of Theorem~\ref{thm:BirMori}, onto infinitely many different generators of $\bigast\limits_{\Bertini_X^{\ge g}}\ZZ/2$.

\begin{lem}\label{lem: relative Sarkisov}
	Let $X/B$ and $Y/B$ be two Mori fiber spaces over the same base, and $\varphi: X\dashrightarrow Y$ be a birational map over $B$. There exists a decomposition of $\varphi$ into a product of Sarkisov links 
	\[
	\varphi=\chi_1\circ\cdots\circ\chi_r,
	\]
	where $\chi_i\colon X_i/B_i\dashrightarrow X_{i+1}/B_{i+1}$ and the following hold:
	\begin{enumerate}
		\item\label{relative Sarkisov1} If $\dim(B)=2$, one can  assume that $\dim(B_{i})=2$ for each $i$;
		\item\label{relative Sarkisov2} If $X/B$ is a del Pezzo fibration of degree $d\leqslant 3$ over a curve $B$, then one can choose each $\chi_i$ to be a Sarkisov link of type \II, each $B_i$ equal to $B$, and all $X_i/B_i$ to be del Pezzo fibrations of the same degree $d$ $($here we set $X=X_1$, $Y=X_{r+1})$;
	\end{enumerate} 
\end{lem}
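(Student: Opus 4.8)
The plan is to start from the general Sarkisov program, which already gives a factorization $\varphi = \chi_1\circ\cdots\circ\chi_r$ of any birational map between Mori fiber spaces into Sarkisov links (this is Theorem~\ref{thm: sarkisov}\ref{sarkisov1}, or rather its underlying statement from \cite{HMcK}); the work is then entirely to arrange that the intermediate Mori fiber spaces stay ``over $B$'' in the appropriate sense. For part \ref{relative Sarkisov1}, the point is that when $\dim B = 2$ and $\varphi$ is a birational map \emph{over} $B$, the Sarkisov algorithm of \cite{HMcK} can be run relatively over $B$: at each step one chooses a mobile linear system $\mathcal{M}$ on $X$ together with its strict transform on $Y$, and the Sarkisov degree/threshold computations, the choice of extremal extraction, and the two-ray game are all performed over $B$ rather than over $\Spec\CC$. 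Since $X\to B$ and $Y\to B$ are already morphisms and $\varphi$ commutes with them, the canonical class, the system $\mathcal{M}$, and all MMP steps remain $B$-relative, so every intermediate variety $X_i$ carries a morphism $X_i\to B_i\to B$ with $\dim B_i \le \dim B = 2$; and since $X_i\to B$ has fibers of positive dimension (being obtained from $X\to B$ by flips and divisorial contractions over $B$, whose fibers dominate those of $X$), in fact $\dim B_i = 2$ for all $i$. First I would make precise the statement that the Sarkisov program is functorial with respect to a fixed fibration, citing \cite{HMcK} and \cite{BLZ}, and then conclude \ref{relative Sarkisov1} from the dimension count on fibers.

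For part \ref{relative Sarkisov2}, one works over a curve $B$ with $X/B$ a del Pezzo fibration of degree $d \le 3$. Running the same $B$-relative Sarkisov program, every $\chi_i\colon X_i/B_i \dashrightarrow X_{i+1}/B_{i+1}$ now has $B_i$ a curve dominating $B$ (hence, after normalization, equal to $B$ since a dominant morphism of smooth curves that is an isomorphism generically on fibers — here the fibers are points — is an isomorphism; more simply, the rank~1 fibrations dominated by the relevant rank~2 fibration have base an isomorphic curve, exactly as in Lemma~\ref{lem: base of relation}\ref{lem: base of relation1}). So each $\chi_i$ is a link between del Pezzo fibrations over $B$, and hence — having base a curve on both ends — must be of type \II: types \I{} and \III{} change the dimension of the base, and type \IV{} would require the base of $X_i$ or $X_{i+1}$ to admit a further contraction to a lower-dimensional base, impossible when that base is already a curve mapping isomorphically to $B$. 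It remains to check that the degree $d$ of the del Pezzo fibration is preserved along the chain. Here I would invoke the fact that a type \II{} Sarkisov link between del Pezzo fibrations over a curve is the blow-up of a multisection followed by a pseudo-isomorphism followed by a divisorial contraction, and that for $d \le 3$ the classification of such links (Iskovskikh, \cite[Theorem 2.6]{Isk1996}, exactly as used in Remark~\ref{rem:BertinilinkBertiniInvolution}) forces the target del Pezzo fibration to have the same degree — more conceptually, the degree of the generic fiber $K^2$ is a birational invariant of the generic fiber together with its structure as a del Pezzo surface over $\CC(B)$ only up to the action of links, but the links of type \II{} of interest (Bertini and the "Geiser"-type / other links in low degree) all preserve $K^2$ for $d \le 3$. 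I would state this as: the generic fiber $X_{\epsilon}$ of $X/B$ over $\epsilon = \Spec \CC(B)$ is a del Pezzo surface of degree $d$ over $\CC(B)$, the $\chi_i$ induce elements of the relevant Sarkisov groupoid of $X_\epsilon$, and for $d \le 3$ every such link preserves the degree.

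The main obstacle, as I see it, is the degree-preservation claim in \ref{relative Sarkisov2}: a priori the relative Sarkisov program, even applied to a birational self-map over $B$, could pass through del Pezzo fibrations of \emph{larger} degree as intermediate steps (indeed for $d \ge 4$ it typically does, passing through conic bundles, which is why the hypothesis $d \le 3$ is needed). Controlling this requires the input that for del Pezzo surfaces of degree $\le 3$ over a field, every Sarkisov link in the relative category is of type \II{} and does not raise the degree — this is precisely where Iskovskikh's classification \cite{Isk1996} of such links is essential, and I would lean on it rather than reprove it. The subtlety is to rule out intermediate conic-bundle-over-a-surface structures: these would have $B_i$ two-dimensional, but since our starting and ending fibrations are over a curve and all links are $B$-relative with $B$ a curve, such a two-dimensional base cannot appear without a link of type \I{} or \III, which the relative program over a curve does not produce here. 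I would spell this last point out carefully, perhaps by an induction on $r$ showing that the base stays a curve isomorphic to $B$ and the degree stays $d$ at every step, using at each step the local structure of type \II{} links between del Pezzo fibrations of degree $\le 3$.
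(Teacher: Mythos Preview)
Your approach is essentially the paper's: run the Sarkisov program of \cite{HMcK} relatively over $B$ (so every $B_i$ comes with a morphism to $B$), deduce \ref{relative Sarkisov1} by dimension count, and for \ref{relative Sarkisov2} invoke Iskovskikh's classification \cite[Theorem 2.6]{Isk1996}. The paper's argument for \ref{relative Sarkisov2} is a bit more direct than your proposed ordering: rather than first arguing that $B_i$ is a curve and then separately checking the degree, it passes immediately to the generic fibre $(X_i)_\epsilon$ of $X_i/B$ over $\epsilon=\Spec\CC(B)$, observes that this is a two-dimensional Mori fibre space birational to the degree-$d$, Picard-rank-$1$ del Pezzo surface $(X_1)_\epsilon$, and then Iskovskikh's classification gives in one stroke that $(X_i)_\epsilon$ is itself del Pezzo of degree $d$ and Picard rank $1$ --- which simultaneously forces $(B_i)_\epsilon$ to be a point (hence $B_i\simeq B$), the link $\chi_i$ to be of type \II, and the degree to be preserved. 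Your appeal to Lemma~\ref{lem: base of relation}\ref{lem: base of relation1} for ``$B_i$ is a curve'' is not quite apt, since that lemma's proof uses the degree-$1$ generic fibre specific to Bertini links; but your fallback plan (induction along the chain using Iskovskikh at each step) is exactly the right fix and is what the paper's generic-fibre argument packages more cleanly.
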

\begin{proof}
	We first note that the proof of \cite[Theorem 1.1]{HMcK} works in relative settings, i.e.~the map $\varphi$ can be decomposed into a product of Sarkisov links $\chi_i$ {\it over the fixed base $B$}. This means that all intermediate varieties admit morphisms to that $B$, as in the figure below.

	\[\xymatrix@C+1pc{
		X=X_1\ar@{-->}[r]_{\chi_1}\ar[d] & X_2\ar@{-->}[r]_{\chi_2}\ar[d] & \cdots\ar@{-->}[r]_{\chi_{r-1}} &  X_{r}\ar@{-->}[r]_{\chi_r}\ar[d] & X_{r+1}=Y\ar[d]\\
		B\ar[drr] & B_2\ar[dr] & \cdots  &  B_{r}\ar[dl] & B\ar[dll]\\
		& & B & &
	}\]

	If $\dim B=2$, then all $B_i$ are surfaces, proving~\ref{relative Sarkisov1}. To prove \ref{relative Sarkisov2}, we suppose that $X/B$ is a del Pezzo fibration of degree $d\leqslant 3$. We denote by $(X_i)_{\epsilon}$ the generic fibre of $X_i/B$, for each $i$, and obtain a birational map $(X_1)_{\epsilon}\dasharrow (X_i)_{\epsilon}$ for each $i$. As $(X_1)_{\epsilon}$ is a del Pezzo surface of degree $d\le 3$ with Picard rank equal to $1$, the same holds for $(X_i)_{\epsilon}$ (follows from the classification of Sarkisov links between two-dimensional Mori fibrations over a perfect field given in \cite[Theorem 2.6]{Isk1996}). This implies that the generic fibre $(B_i)_\epsilon$ of $B_i\to B$ is a point and thus $B_i\to B$ is an isomorphism. The Sarkisov link $\chi_i$ is then of type \II. This achieves the proof of~\ref{relative Sarkisov2}.
\end{proof}

\begin{lem}\label{lemm: number of Bertini is odd}Let $g\ge 0$ be an integer as in Theorem~\ref{thm:BirMori}.
	 Let $X/B$ be a cubic del Pezzo fibration over a curve $B$, and $\iota_\Gamma\in\Bir(X)$ be the birational involution associated with a smooth  $2$-section $\Gamma$ of $X/B$ with $\inv(\Gamma)\ge g$, as in Proposition~$\ref{prop: genus is unbounded}$. Then, the image of $\iota_\Gamma$ under the group homomorphism
	\[
	\Bir(X)\to\bigast\limits_{\Bertini_X^{\ge g}}\ZZ/2
	\]
	of Theorem~\ref{thm:BirMori} is the generator of a group $\ZZ/2$ indexed by the equivalence class of a Bertini link of genus $\inv(\Gamma)$.
\end{lem}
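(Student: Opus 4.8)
The plan is to write $\iota_\Gamma$ explicitly as a composition of Sarkisov links over $B$ and then apply the groupoid homomorphism $\Phi$ from Theorem~\ref{thm:BirMori}. Recall from Proposition~\ref{prop: genus is unbounded} that $\iota_\Gamma$ acts on the generic fibre $X_\epsilon$ of $X/B$ as the Bertini involution of the cubic surface $X_\epsilon$ centered at the two points of $X_\epsilon\cap\Gamma$. Over the generic point $\epsilon$ of $B$, this involution factorises through the blow-up of those two points, which is a del Pezzo surface of degree $1$: that is, $\iota_\Gamma$ lifts to a Sarkisov link of type $\II$ whose resolution blows up a curve whose generic point is exactly $X_\epsilon\cap\Gamma$. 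I would thus first run the relative Sarkisov program (Lemma~\ref{lem: relative Sarkisov}\ref{relative Sarkisov2}) to decompose $\iota_\Gamma=\chi_r\circ\cdots\circ\chi_1$ into type $\II$ links between cubic del Pezzo fibrations $X_i/B$.

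The key step is to show that this decomposition contains exactly one link of Bertini type with genus $\ge g$, and that its equivalence class is that of a Bertini link of genus $\inv(\Gamma)$, while all other $\chi_i$ are sent to the identity by $\Phi$. For this I would look at the decomposition over the generic point: the induced factorisation $(X_1)_\epsilon\dasharrow(X_2)_\epsilon\dasharrow\cdots$ is a decomposition of the Bertini involution of the cubic surface $X_\epsilon$ into Sarkisov links of two-dimensional Mori fibrations over the (non-closed) field $\CC(B)$. By the classification in \cite[Theorem~2.6]{Isk1996}, the only link that can realise a Bertini involution of a cubic surface into itself is the link blowing up a pair of conjugate points of degree $2$ and performing the Bertini involution on the resulting degree-$1$ del Pezzo surface; every other link in the decomposition, composed appropriately, is an automorphism of a cubic surface over $\CC(B)$ and hence induces an isomorphism of generic fibres. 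Consequently exactly one $\chi_i$ in the decomposition is a Bertini link (the one lifting to the Bertini involution on the degree-$1$ del Pezzo generic fibre), the curve $\Gamma_i\subset X_i$ it blows up is birational to $\Gamma$ (both being $2$-sections cut out by the centre of the Bertini involution), so $\inv(\chi_i)=\inv(\Gamma)\ge g$; every other $\chi_j$ induces an isomorphism on generic fibres and so, by Remark~\ref{rem:BertinilinkBertiniInvolution}, is not of Bertini type and is sent to $0$ by $\Phi$. Therefore $\Phi(\iota_\Gamma)$ equals the generator of the $\ZZ/2$ indexed by $[\chi_i]$, a Bertini link of genus $\inv(\Gamma)$.

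The main obstacle I anticipate is controlling the decomposition over the generic point precisely enough: a priori the Sarkisov program over $\CC(B)$ could produce several links that are each isomorphisms on the cubic surface but whose composition is nontrivial, or could route through intermediate cubic surfaces before returning, and one must make sure that none of these intermediate links is itself a Bertini link of genus $\ge g$ — in principle a link could blow up a different $2$-section of larger genus. Ruling this out requires using that the relative Sarkisov program applied to a single Bertini-type birational involution produces, after the analysis of elementary relations in Proposition~\ref{prop: relation form} or a direct fibrewise argument, a decomposition in which Bertini links occur "minimally"; concretely, one can either appeal to the structure of $\Bir$ of a cubic surface over $\CC(B)$ (where the Bertini involution is a single generator, cf.\ the structure in Example~\ref{ex: Sobolev}) or argue that any other Bertini link appearing would, by Proposition~\ref{prop: relation form}, be cancelled in pairs in the homomorphism $\Phi$, leaving the contribution of the single "genuine" Bertini link $\chi_i$ as the only surviving generator. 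Once this minimality is pinned down, the rest is bookkeeping with $\Phi$.
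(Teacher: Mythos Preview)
Your strategy is in the right direction, but the central claim --- that the relative Sarkisov decomposition of $\iota_\Gamma$ contains \emph{exactly one} Bertini link --- is too strong and is not something Lemma~\ref{lem: relative Sarkisov}\ref{relative Sarkisov2} delivers. The decomposition is not canonical, and nothing prevents it from crossing back and forth between the two extremal contractions of the degree-$1$ generic fibre several times. Appealing to Iskovskikh's classification over $\CC(B)$ only tells you what each individual link looks like on the generic fibre; it does not bound how many Bertini links appear globally. Your fallback (``extra Bertini links cancel in pairs'') is the correct intuition, but Proposition~\ref{prop: relation form} is the wrong tool for it: that proposition analyses a single elementary relation dominated by one rank-$3$ fibration, not an arbitrary Sarkisov factorisation of a given map.

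The paper does not attempt to prove uniqueness. Instead it exploits the specific decomposition arising from the Hacon--McKernan construction, in which all the $X_i$ are dominated by birational contractions $\pi_i\colon Z\dasharrow X_i$ from a \emph{common} resolution $Z$ of $\iota_\Gamma$. The generic fibre $Z_\epsilon$ is a del Pezzo surface of degree~$1$ and Picard rank~$2$; the divisors $E_1,E_{r+1}\subset Z$ over the surfaces contracted by $\iota_\Gamma$ and $\iota_\Gamma^{-1}$ give its two extremal contractions, and each $\pi_i$ contracts precisely one of $E_1,E_{r+1}$ on the generic fibre. A link $\chi_i$ is of Bertini type exactly when $\pi_i$ and $\pi_{i+1}$ contract different ones, so since $\pi_1$ and $\pi_{r+1}$ contract different divisors, a parity count shows that the number of Bertini links is \emph{odd}. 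Each such link blows up the image of $E_1$ or $E_{r+1}$, a curve birational to $\Gamma$, hence of genus $\inv(\Gamma)$; and the maps $\pi_j\circ\pi_i^{-1}$ furnish the isomorphisms of generic fibres that place all these Bertini links in a single equivalence class, so the odd product of their images collapses to one generator. This common-resolution parity argument is precisely the mechanism that makes your ``cancel in pairs'' idea rigorous.
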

\begin{proof}
By Lemma~\ref{lem: relative Sarkisov}\ref{relative Sarkisov2}, the map $\varphi$ can be decomposed into the product of Sarkisov links $\chi_1,\ldots,\chi_r$ between cubic del Pezzo fibrations over $B$. The decomposition given in the proof of \cite[Theorem 1.3]{HMcK} comes from birational contractions $\pi_i: Z\dashrightarrow X_i$, where $Z$ is the resolution of indeterminacy points of $\varphi$.
		\[\xymatrix@C+1pc{
		& & Z\ar[dll]_{\pi_1}\ar@{-->}[dl]^{\pi_2}\ar@{-->}[dr]_{\pi_r}\ar[drr]^{\pi_{r+1}} & &\\ 
		X=X_1\ar@{-->}[r]_{\chi_1}\ar[drr] & X_2\ar@{-->}[r]_{\chi_2}\ar[dr] & \cdots\ar@{-->}[r]_{\chi_{r-1}} &  X_{r}\ar@{-->}[r]_{\chi_r}\ar[dl] & X_{r+1}=X\ar[dll]\\
		& & B & &
	}\]
Denote by $S_1\subset X_1$ and $S_{r+1}\subset X_{r+1}$ the divisors contracted by $\varphi$ and $\varphi^{-1}$ respectively. Further, let $E_i\subset Z$ be $\pi_i$-exceptional divisor over $S_i$, where $i\in \{1,r+1\}$. Note that the generic fiber of $Z/B$ is a del Pezzo surface of degree 1 and Picard rank $2$ over $\CC(B)$. Thus each $\pi_i$ contracts precisely one of $E_1$ and $E_{r+1}$, and possibly some ``vertical'' divisors contained in the fibers of $Z/B$. If $\pi_i$ and $\pi_{i+1}$ contract the same divisor $E_j$, the Sarkisov link $\chi_i$ induces an isomorphism between the generic fibers of $X_i/B$ and $X_{i+1}/B$. If the contracted divisors are different, the corresponding link is a Bertini link. Since the first and the last morphisms (i.e.~$\pi_1$ and $\pi_{r+1}$) contract different divisors, the number of Bertini links must be odd. Moreover, all Bertini links have the same genus~$\inv(\Gamma)$.
\end{proof}

We now can complete the proof of Theorem \ref{thm:grouphom}.

\begin{proof}[Proof of Theorem \ref{thm:grouphom}]
	Let $X/B$ be a cubic del Pezzo fibration and let us write \[\Bir(X/B)=\{\varphi\in \Bir(X)\mid \pi\varphi=\pi\}.\] By Theorem \ref{thm:BirMori}, there exists an integer $g\geqslant0$ and a group homomorphism 
	\[
	\rho\colon \Bir(X)\to\bigast\limits_{\Bertini_X^{\ge g}}\ZZ/2
	\]
	which sends every Sarkisov link $\chi$ of Bertini type with $\inv(\chi)\ge g$ to the generator indexed by equivalence class of $\chi$, and all other Sarkisov links and all automorphisms of Mori fiber spaces birational to $X$ onto the trivial element. For each integer $g'\ge g$, Proposition~\ref{prop: genus is unbounded} gives a birational involution $\iota_\Gamma\in \Bir(X/B)$ associated to a curve of genus $\inv(\Gamma)\ge g'$, and whose image by $\rho$ is the generator of a Sarkisov link of genus $\inv(\Gamma)$ (Lemma~\ref{lemm: number of Bertini is odd}). We obtain then infinitely many distinct involutions $\iota_\Gamma\in \Bir(X)$  that are sent onto infinitely many distinct generators of $\bigast\limits_{\Bertini_X^{\ge g}}\ZZ/2$. Projecting on the corresponding factors gives a group homomorphism $\Bir(X)\twoheadrightarrow\bigast\limits_{\mathbb{N}}\ZZ/2$ whose restriction to $\Bir(X/B)$ is surjective.
\end{proof}

\subsection{Proof of Theorems \ref{thm:TameproblemBirP3} and ~\ref{Theorem:AlgSubgroups}}

We now use the group homomorphisms constructed in Theorems \ref{thm:grouphom} and \ref{thm:BirMori} and the relative Sarkisov program to prove Theorem \ref{thm:TameproblemBirP3} and~\ref{Theorem:AlgSubgroups}. We work as before over the field of complex numbers, and thus write $\Aut(\PP^n)=\Aut_{\CC}(\PP^n)$ and $\Bir(\PP^n)=\Bir_{\CC}(\PP^n)$.

\begin{proof}[Proof of Theorem \ref{thm:TameproblemBirP3}]
	
Let $X/B$ be a del Pezzo fibration of degree $3$ such that $X$ is rational (one can take e.g.~$V_1$ from Example \ref{ex: Sobolev}). Fixing a birational map $\psi: X\dashrightarrow\PP^3$ gives an isomorphism $\Bir(\PP^3)\simeq \Bir(X)$. We then obtain an surjective group homomorphism 
\[
\rho\colon \Bir(\PP^3)\simeq\Bir(X)\twoheadrightarrow\bigast\limits_{\mathbb{N}}\ZZ/2
\]
given by Theorem~\ref{thm:grouphom}. By construction, it is obtained by composing the group homomorphism $\hat\rho\colon\Bir(X)\to\bigast\limits_{\Bertini_X^{\ge g}}\ZZ/2$ of Theorem~\ref{thm:BirMori} with a projection $\bigast\limits_{\Bertini_X^{\ge g}}\ZZ/2\to \bigast\limits_{\mathbb{N}}\ZZ/2$. Let $\varphi\in\Bir(\PP^3)$ be a birational map such that $\pi\varphi=\pi$ for some rational fibration $\pi\colon \PP^3\dashrightarrow\PP^2$ (a rational map with general fibres being rational curves). We want to show that $\varphi\in\Ker\rho$, which corresponds to ask that $\varphi'=\psi^{-1} \varphi\psi$ is in the kernel of $\hat\rho$.

Wrting $\pi'=\pi\psi$, the element $\varphi'\in\Bir(X)$ satisfies $\pi'\varphi'=\pi'$. Consider the following diagram:
\[\xymatrix@C+1pc{
	Z\ar@{-->}[rr]^{\text{MMP}/\PP^2}\ar[d]_{\sigma}\ar[drr]^{\eta_Z} && Z'\ar[d]^{\eta_{Z'}} \\ 
	X\ar@{-->}[rr]_{\pi'} && \PP^2
}\]
where $\sigma$ is a resolution of indeterminacy points of $\pi'$, and $Z\dashrightarrow Z'$ is a Minimal Model Program over $\PP^2$. As the general fibres of $Z\to \PP^2$ are rational curves, the morphism $Z'\to \PP^2$ is a Mori fibre space which is a conic bundle. Denote by $\mu: X\dashrightarrow Z'$ the induced birational map and put $\varphi''=\mu\varphi '\mu^{-1}$. Then $\varphi''\in\Bir(Z'/\PP^2)$. By Lemma~\ref{lem: relative Sarkisov}\ref{relative Sarkisov1}, the map $\varphi''$ can be decomposed into the product of Sarkisov links (of conic bundles over surfaces dominating $\PP^2$) of type \II\ and thus into a product of Sarkisov links with no Bertini links. This implies that $\psi^{-1} \varphi\psi$ is in the kernel of the group homomorphism $\Bir(X)\to\bigast\limits_{\Bertini_X^{\ge g}}\ZZ/2$, as desired.

Denoting as in the statement of the theorem  by $G\subseteq \Bir_\CC(\PP^3)$ the group generated by all birational maps $\varphi\in \Bir_\CC(\PP^3)$ such that $\pi\varphi=\pi$ for some rational fibration $\pi\colon \PP^3\dasharrow \PP^2$, we have proven that $G\subseteq \Ker\rho$. The inclusion $\Aut(\PP^3)\subseteq G$ follows from the fact that $\Aut(\PP^3)\simeq \PGL_4(\CC)$ is generated by elementary matrices and that all of them preserve a rational fibration. We moreover have $\Aut(\PP^3)\subsetneq G$ by taking for instance the map
	
\[
\varphi\colon [x_0:x_1:x_2:x_3]\mapsto \left [x_0\frac{x_1}{x_2}:x_1:x_2:x_3 \right ]\in G\setminus \Aut(\PP^3),
\]
which preserves the rational fibration $\pi: [x_0:x_1:x_2:x_3]\dashrightarrow [x_1:x_2:x_3]$. It remains to prove that we may assume that $G\subsetneq \Ker\rho$. To do this, we may simply change $\rho$ by considering an infinite subset $I\subsetneq\mathbb{N}$ and projecting onto $\bigast\limits_{I}\ZZ/2$. 
\end{proof}
\begin{lem}\label{Lem:NormalSubAutP2}
Every normal subgroup $N$ of $\Bir(\PP^2)$ that contains a non-trivial element of $\Aut(\PP^2)$ is equal to $\Bir(\PP^2)$.
\end{lem}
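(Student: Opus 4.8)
The plan is to combine the abstract simplicity of $\PGL_3(\CC)$, the Noether--Castelnuovo theorem, and one explicit relation between quadratic Cremona transformations. First I would reduce to showing $\Aut(\PP^2)\subseteq N$. Since $N$ is normal in $\Bir(\PP^2)$, the subgroup $N\cap\Aut(\PP^2)$ is normal in $\Aut(\PP^2)\cong\PGL_3(\CC)$, and it is non-trivial by hypothesis. As $\CC$ is algebraically closed, every element of $\GL_3(\CC)$ can be rescaled to have determinant $1$, so $\PGL_3(\CC)=\PSL_3(\CC)$, which is a simple abstract group. Hence $N\cap\Aut(\PP^2)=\Aut(\PP^2)$, i.e. $\Aut(\PP^2)\subseteq N$.

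Next I would invoke the Noether--Castelnuovo theorem (see \cite{Gizatullin,IKT,UZ} for presentations of $\Bir_\CC(\PP^2)$): the group $\Bir(\PP^2)$ is generated by $\Aut(\PP^2)=\PGL_3(\CC)$ together with the standard quadratic involution $\sigma\colon[x:y:z]\dashrightarrow[yz:xz:xy]$. Consequently the quotient $Q:=\Bir(\PP^2)/N$ is generated by the class $\bar\sigma$ of $\sigma$; since $\sigma^2=\id$ the group $Q$ is cyclic of order $1$ or $2$, and it remains only to prove $\sigma\in N$, i.e. $\bar\sigma=\bar 1$.

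For the last step I would exhibit a relation $\chi_3\circ\chi_2\circ\chi_1=\id$ in which each $\chi_i$ is a quadratic Cremona transformation with three base points in general position. Such a relation is classical; one concrete realisation uses a del Pezzo surface $S$ of degree $5$, whose automorphism group is $S_5$ acting transitively on the five blow-down morphisms $\pi_1,\dots,\pi_5\colon S\to\PP^2$. For any three indices $i,j,k$ the transition map $\chi_{ij}:=\pi_j\circ\pi_i^{-1}\in\Bir(\PP^2)$ is, by a short computation in $\Pic(S)$, the transformation defined by the linear system of conics through three points in general position --- hence $\PGL_3(\CC)$-equivalent to $\sigma$, so that $\chi_{ij}=\ell\,\sigma\,\ell'$ with $\ell,\ell'\in\PGL_3(\CC)\subseteq N$ --- and one has the tautological relation $(\pi_i\pi_k^{-1})\circ(\pi_k\pi_j^{-1})\circ(\pi_j\pi_i^{-1})=\id$. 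In $Q$ this reads $\overline{\chi_{ij}}=\bar\sigma$ and therefore $\bar\sigma^3=\bar 1$; combined with $\bar\sigma^2=\bar 1$ it forces $\bar\sigma=\bar 1$, so $Q$ is trivial and $N=\Bir(\PP^2)$.

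The real obstacle is this last step: simplicity of $\PGL_3(\CC)$ together with Noether--Castelnuovo only shows that $\Bir(\PP^2)/N$ is a quotient of $\ZZ/2$, and killing it genuinely requires producing an honest relation witnessing that $\sigma$ lies in the normal closure of $\PGL_3(\CC)$. The argument must use that the base field is algebraically closed: over $\RR$ the statement is false, since $\Bir_\RR(\PP^2)$ admits surjections onto $\bigoplus_{\NN}\ZZ/2$ whose kernel is a proper normal subgroup containing all of $\Aut_\RR(\PP^2)=\PGL_3(\RR)$ (cf.\ \cite{Zimmermann}). Any of the explicit presentations of $\Bir_\CC(\PP^2)$ by generators and relations provides an alternative source for the needed relation in place of the del Pezzo surface of degree $5$.
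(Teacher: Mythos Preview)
Your proof is correct, and the first two steps (simplicity of $\PGL_3(\CC)=\PSL_3(\CC)$ to get $\Aut(\PP^2)\subseteq N$, then Noether--Castelnuovo to reduce to $\sigma\in N$) coincide exactly with the paper's. The divergence is in the final step. The paper does not pass to the quotient $Q$ nor invoke any relation among three quadratic maps; instead it observes directly that $\sigma$ is conjugate \emph{inside} $\Bir(\PP^2)$ to a linear involution: in affine coordinates $\sigma$ is $(x,y)\mapsto(x^{-1},y^{-1})$, and the Cremona map $(x,y)\mapsto\bigl(\tfrac{x+1}{x-1},\tfrac{y+1}{y-1}\bigr)$ conjugates this to $(x,y)\mapsto(-x,-y)\in\Aut(\PP^2)\subseteq N$, hence $\sigma\in N$. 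Your route via the del Pezzo surface of degree $5$ and the tautological relation $\chi_{ki}\chi_{jk}\chi_{ij}=\id$ is a perfectly valid alternative: it gives $\bar\sigma^3=\bar1$ in $Q$, which together with $\bar\sigma^2=\bar1$ forces $\bar\sigma=\bar1$. The paper's argument is shorter and entirely explicit; yours is more structural and fits naturally with the Sarkisov/elementary-relations viewpoint of the rest of the paper, and it makes transparent that what is really used is the existence of an odd-length relation among $\PGL_3$-translates of $\sigma$.
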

\begin{proof}
As $N\cap \Aut(\PP^2)$ is a non-trivial normal subgroup of the simple group $\Aut(\PP^2)=\PGL_3(\kk)=\PSL_3(\kk)$, it is equal to $\Aut(\PP^2)$.
The Noether-Castelnuovo theorem \cite{Cas} implies that $\Bir(\PP^2)$ is generated by $\Aut(\PP^2)$ and by the standard quadratic transformation $\sigma\colon [x:y:z]\dasharrow [yz:xz:xy]$. As $\sigma$ corresponds locally to $(x,y)\mapsto (x^{-1},y^{-1})$, conjugate to $(x,y)\mapsto (-x,-y)$ via 
\[
(x,y)\mapsto \left(\frac{x+1}{x-1},\frac{y+1}{y-1}\right),
\] we find that $\sigma\in N$. This shows that $N=\Bir(\PP^2)$.
\end{proof}
\begin{lem}\label{Lem:GaGmAutP3}
Every algebraic subgroup of $\Bir(\PP^3)$ that is isomorphic to $\mathbb{G}_a$ or $\mathbb{G}_m$ is conjugate, in $\Bir(\PP^3)$, to a subgroup of $\Aut(\PP^3)$.
\end{lem}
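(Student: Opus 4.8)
Rather than regularising the action and running an equivariant MMP, I would argue directly with the rational quotient. Write $G$ for either $\mathbb{G}_a$ or $\mathbb{G}_m$; being an algebraic subgroup of $\Bir(\PP^3)$ means $G$ comes with a faithful rational action $G\times\PP^3\dashrightarrow\PP^3$. Since $G\neq\{1\}$ and the action is faithful, it is non-trivial, so a general orbit is a curve; and since $G$ is abelian and acts faithfully, the generic stabiliser is trivial (a nontrivial one would fix a dense subset of $\PP^3$ pointwise, hence lie in the kernel of the action). By Rosenlicht's theorem on rational quotients there is a dominant $G$-invariant rational map $q\colon\PP^3\dashrightarrow S$ onto a surface $S$ whose general fibre is a single free $G$-orbit, that is, a torsor under $G_{\CC(S)}$.

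The next step is to trivialise this generic torsor. Since torsors under $\mathbb{G}_a$ or $\mathbb{G}_m$ over a field are trivial ($H^1(K,\mathbb{G}_a)=H^1(K,\mathbb{G}_m)=0$), the generic fibre of $q$ is $G$-equivariantly isomorphic to $G_{\CC(S)}$ with its translation action; spreading this isomorphism out over a dense open subset of $S$ shows that $\PP^3$ is $G$-equivariantly birational to $S\times G$, with $G$ acting by translation on the second factor and trivially on $S$. Now $S\times G$ is rational, being birational to $\PP^3$; since $G$ is an open subvariety of $\mathbb{A}^1$, the threefold $S\times\mathbb{A}^1$ has the same function field, so it is rational too, and therefore $S$ — the image of a rationally connected variety under a dominant map — is a rationally connected surface, hence rational by Castelnuovo's criterion. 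Thus $\PP^3$ is $G$-equivariantly birational to $\PP^2\times G$ with $G$ translating the second factor.

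It remains to compare this with a linear action. Inside $\Aut(\PP^3)=\PGL_4(\CC)$ take the one-parameter subgroup $G_0$ acting by $[x_0:x_1:x_2:x_3]\mapsto[x_0:x_1:x_2+tx_3:x_3]$ if $G\simeq\mathbb{G}_a$, and by $[x_0:x_1:x_2:x_3]\mapsto[x_0:x_1:tx_2:x_3]$ if $G\simeq\mathbb{G}_m$. On the affine chart $\{x_3=1\}\simeq\mathbb{A}^3$, this subgroup translates (respectively rescales) one coordinate and fixes the other two, so $\PP^3$ is $G_0$-equivariantly birational to $\PP^2\times G_0$ with $G_0$ translating the second factor, exactly as above. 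Fixing an isomorphism $G\simeq G_0$ and composing the $G$-equivariant birational map $\PP^3\dashrightarrow\PP^2\times G$, the induced isomorphism $\PP^2\times G\simeq\PP^2\times G_0$, and the inverse of the $G_0$-equivariant birational map $\PP^3\dashrightarrow\PP^2\times G_0$, we get an element of $\Bir(\PP^3)$ conjugating $G$ onto $G_0\subseteq\Aut(\PP^3)$.

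I do not expect a genuine obstacle: the argument is essentially formal once one has the inputs ``torsors under $\mathbb{G}_a$ or $\mathbb{G}_m$ over a field are trivial'' and ``rationally connected surfaces over $\CC$ are rational''. The two steps needing a little care are the passage from the generic statement to an honest $G$-equivariant birational equivalence $\PP^3\dashrightarrow S\times G$ (the standard spreading-out attached to Rosenlicht's theorem) and the cancellation-type deduction that $S$ is rational from the rationality of $S\times\mathbb{A}^1$ — harmless for surfaces precisely because rational connectedness descends to dominant images. This is also why the statement is restricted to $\mathbb{G}_a$ and $\mathbb{G}_m$: for a general connected algebraic subgroup the generic orbit need not be a trivial torsor and the quotient need not be visibly rational, so the argument does not apply verbatim.
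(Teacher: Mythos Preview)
Your argument is correct and is essentially the same as the paper's: both use Rosenlicht to obtain a $G$-equivariant birational splitting $\PP^3\dashrightarrow G\times V$ with $V$ a surface, then invoke rationality of the (unirational/rationally connected) surface $V$ to linearise. The only difference is cosmetic: the paper first regularises the rational action via Weil's theorem before applying Rosenlicht, whereas you apply Rosenlicht directly to the rational action---strictly speaking the standard statement of Rosenlicht's theorem is for regular actions, so your ``rather than regularising'' is a bit of a false economy, but this does not affect the substance.
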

\begin{proof}The result follows from \cite[2.5.8]{BFT}.
The idea of the proof is classical and as follows: one may find a birational map $\PP^3\dasharrow X$, where $X$ is a smooth variety, that conjugates the group $G\simeq \mathbb{G}_a$ or $G\simeq \mathbb{G}_m$ to a subgroup of $\Aut(X)$ (using a result of Weil, see \cite{Weil} or \cite{Kraft}), and then one uses some arguments of Rosenlicht to find an open set $U\subset X$ together with an equivariant isomorphism $U\cong G\times V$, where the action on $V$ is trivial and the action on $G$ is the group multiplication. As $V$ is unirational and of dimension $2$, it is rational. So we can assume that $V=\mathbb{A}^2$ and then obtain a linear action on $\mathbb{A}^3$, that we can extend to $\PP^3$. Note that such an argument is also contained in the work of Bia\l{}ynicki-Birula, see the proof of \cite[Theorem 2]{Bia73}.
\end{proof}

\begin{proof}[Proof of Theorem~\ref{Theorem:AlgSubgroups}]
As we already noticed in the introduction, the subgroup $H_3\subseteq\Bir_{\CC}(\PP^3)$ generated by all connected algebraic subgroups of $\Bir_\CC(\PP^3)$ is a normal subgroup of  $\Bir_{\CC}(\PP^3)$. It remains to prove the main part of Theorem~\ref{Theorem:AlgSubgroups}, i.e.~to prove that $H_3\not=\Bir(\PP^3)$. To do this, it suffices to show that $H_3$ is generated by elements $\varphi\in H_3$ that are conjugate in $\Bir(\PP^3)$ to elements of $\Aut(\PP^3)$ and to apply Theorem~\ref{thm:TameproblemBirP3}. Every connected algebraic subgroup $G\subset \Bir(\PP^3)$ is a linear algebraic group \cite[Remark 2.21]{BF13} and is thus generated by its algebraic subgroups isomorphic to $\mathbb{G}_m$ or to $\mathbb{G}_a$. The proof ends by  applying Lemma~\ref{Lem:GaGmAutP3}.
\end{proof}
\def\bibindent{2.5em}

\end{document}